\newcommand{\srg}{\mathrm{srg}}
\newcommand{\PG}{\mathrm{PG}}
\newcommand{\GQ}{\mathrm{GQ}}
\newcommand{\ID}{\gamma^{\rm{ID}}}
\newcommand{\IDf}{\gamma^{\rm{ID}}_f}
\newcommand{\LD}{\gamma^{\rm{LD}}}
\newcommand{\C}{\mathcal{C}} 
\theoremstyle{plain}
\newtheorem{theorem}{Theorem}
\newtheorem{lemma}[theorem]{Lemma}
\newtheorem{corollary}[theorem]{Corollary}
\newtheorem{proposition}[theorem]{Proposition}
\theoremstyle{definition}
\newtheorem{example}[theorem]{Example}
\theoremstyle{remark}
\newtheorem{remark}[theorem]{Remark}
\title{Identifying codes in vertex-transitive graphs and strongly regular graphs}
\author{Sylvain Gravier\\
\small Institut Fourier\\[-0.8ex] 
\small University of Grenoble\\[-0.8ex] 
\small Grenoble, France\\
\small\tt sylvain.gravier@ujf-grenoble.fr\\
\and
Aline Parreau \thanks{This work has been done when this author was an FNRS post-doctoral fellow at the University of Liege.}\\
\small LIRIS\\[-0.8ex]
\small University of Lyon, CNRS\\[-0.8ex] 
\small Lyon, France\\
\small\tt aline.parreau@univ-lyon1.fr\\
\and
Sara Rottey	\\
\small  Ghent University / Vrije Universiteit Brussel\\[-0.8ex]   
\small Ghent, Belgium / Brussels, Belgium\\
\small\tt sara.rottey@ugent.be\\
\and
Leo Storme\\
\small Ghent University\\[-0.8ex]   
\small Ghent, Belgium\\
\small\tt ls@cage.ugent.be\\
\and
\'Elise Vandomme	\\
\small Department of Mathematics / Institut Fourier\hspace{1cm}\\[-0.8ex]
\small  University of Liege / University of Grenoble\\[-0.8ex] 
\small Liege, Belgium / Grenoble, France\\
\small\tt e.vandomme@ulg.ac.be\\
}
\date{\dateline{May 12, 2015}{Sep 26, 2015}\\
\small Mathematics Subject Classifications: 05B25
, 05C69 
}
\begin{document}

\maketitle

\begin{abstract}
We consider the problem of computing identifying codes of graphs and its fractional relaxation.
The ratio between the size of optimal integer and fractional solutions is between 1 and $2\ln(|V|)+1$ where $V$ is the set of vertices of the graph. We focus on vertex-transitive graphs for which we can compute the exact fractional solution. There are known examples of vertex-transitive graphs that reach both bounds. We exhibit infinite families of vertex-transitive graphs with integer and fractional identifying codes of order $|V|^{\alpha}$ with $\alpha \in \{\frac{1}{4},\frac{1}{3},\frac{2}{5}\}$. These families are generalized quadrangles (strongly regular graphs based on finite geometries). They also provide examples for metric dimension of graphs.

 \bigskip\noindent \textbf{Keywords:} identifying codes, metric dimension, vertex-transitive graphs, strongly regular graphs, finite geometry, generalized quadrangles
\end{abstract}

\section*{Introduction}

Given a discrete structure on a set of elements, a natural question is to be able to locate efficiently the elements using the structure. If the elements are the vertices of a graph, one can use the neighbourhoods of the elements to locate them. In this context, Karpovsky, Chakrabarty and Levitin~\cite{KCL} have introduced the notion of identifying codes in 1998. An identifying code of a graph is a dominating set having the property that any two vertices of the graph have distinct neighbourhoods within the identifying code. Hence any vertex of the graph is specified by its neighbourhood in the identifying code.
Initially, identifying codes have been introduced to model fault-diagnosis in multiprocessor systems but later other applications were discovered such as the design of emergency sensor networks in facilities~\cite{UTS04}. They are related to other concepts in graphs like locating-dominating sets~\cite{S87,S88} and resolving sets~\cite{B80,S75}.

The problem of computing an identifying code of minimal size is NP-complete in general~\cite{CHL03} but can be naturally expressed as an integer linear problem. Also, one can ask how good the fractional relaxation of this problem can be. We focus on vertex-transitive graphs since for these graphs, we are able to compute the optimal size of a fractional identifying code. This value  depends only on three parameters of the graph: the number and degree of vertices and the smallest size of the symmetric difference of two distinct closed neighbourhoods. Moreover, the optimal cardinality of an integer identifying code is at most at a logarithmic factor (in the number of vertices $|V|$) of the fractional optimal value.

Identifying codes have already been studied in different classes of vertex-transitive graphs, especially in cycles~\cite{BCHL04,GMS06,JL12,XTH08} and hypercubes~\cite{BHL00,EJLR08,ELR08,HL02,KCL}.
In these examples, the order of the size of an optimal identifying code seems to always match its fractional value.
However, the smallest size of symmetric differences of closed neighbourhoods is small compared to the number of vertices: either it is constant (for cycles) or it has logarithmic order in the number of vertices (for hypercubes).
Therefore we focus in this paper on vertex-transitive strongly regular graphs that are vertex-transitive graphs with the property that two adjacent (respectively non-adjacent) vertices always have the same number of common neighbours. In particular, the size of symmetric differences can only take two values and is of order at least $\sqrt{|V|}$ if the graph is not a trivial strongly regular graph.

Another interest of considering identifying codes in strongly regular graphs is that they are strongly related to resolving sets. A resolving set is a set $S$ of vertices such that each vertex is uniquely specified by its distances to $S$. The minimum size of a resolving set is called the metric dimension of the graph. If a graph has diameter $2$ -- as is the case for non-trivial strongly regular graphs -- then a resolving set is the same as an identifying code except that the vertices of the resolving set are not identified. A consequence is that the optimal size of identifying codes and the metric dimension have the same order in strongly regular graphs. Actually, Babai~\cite{B80} introduced a notion equivalent to resolving sets in order to improve the complexity of the isomorphism problem for strongly regular graphs. He established an upper bound of order $\sqrt{|V|}\log_2(|V|)$ on the metric dimension of strongly regular graphs~\cite{B80,B81}. He also gave a finer bound using the degree $k$ of vertices of order $|V|\log_2(|V|)/k$~\cite{B81}. Thanks to this last bound, any family of strongly regular graphs for which $|V|$ is linear with $k$ have logarithmic metric dimension. This is for example the case of Paley graphs that have also been studied by Fijav\v{z} and Mohar~\cite{FM04} who gave a finer bound. Bailey and Cameron~\cite{BC11} proved that the metric dimension of some Kneser and Johnson graphs has order $\sqrt{|V|}$. Values for small strongly regular graphs have been computed~\cite{B13b,KCCKK08}. Recently, Bailey~\cite{B13a} used resolving sets in strongly regular graphs to compute the metric dimension of some distance-regular graphs.

Paley graphs give an example of an infinite family of graphs for which the optimal value of fractional identifying code is constant but the integer value is logarithmic, and so the gap between the two is also logarithmic.
We consider another family of strongly regular graphs that have never been studied in the context of identifying codes nor resolving sets: the adjacency graphs of generalized quadrangles. These graphs are constructed using finite geometries. Constructing identifying codes can be seen as a way to break the inherent symmetry of these graphs. We give constructions of identifying codes with size of optimal order. This order is of the form  $|V|^{\alpha}$ with $\alpha\in \{\frac{1}{4},\frac{1}{3},\frac{2}{5}\}$ and corresponds to the order of the fractional value.

\bigskip\noindent\textbf{Outline.} In Section~\ref{sec:bas}, we give formal definitions and classic results useful for the rest of the paper. In Section~\ref{sec:frac}, we exhibit the linear program for identifying codes, compute the optimal value  of the relaxation for vertex-transitive graphs and deduce a general bound. In Section~\ref{sec:previous}, we review known results for identifying codes in vertex-transitive graphs and compare them to our general bound. Finally in Section~\ref{sec:srg}, we study strongly regular graphs and in particular adjacency graphs of generalized quadrangles.

\section{Preliminaries}\label{sec:bas}

All the considered graphs are undirected, finite and simple. Let $G=(V,E)$ be a graph.
Let $u$ be a vertex of $G$. We denote by $N(u)$ the open neighbourhood of $u$, that is the set of vertices that are adjacent to $u$. We denote by $N[u]=N(u)\cup\{u\}$ the closed neighbourhood of $u$: $u$ and all its neighbours. The {\em degree} of a vertex is the number of its neighbours. A graph is {\em regular} if all vertices have the same degree. Given two vertices $u$ and $v$, we denote by $d(u,v)$ the distance between $u$ and $v$ that is the number of edges in a shortest path between $u$ and $v$. The {\em diameter} of $G$ is the maximum distance between any pair of vertices of the graph.
An {\em isomorphism} $\varphi: G=(V,E) \to G'=(V',E')$ between two graphs $G$ and $G'$ is a bijective application from $V$ to $V'$ that preserves the edges of the graph:  $uv$ is an edge of $G$ if and only if $\varphi(u)\varphi(v)$ is an edge of $G'$. If $G=G'$, $\varphi$ is called an {\em automorphism} of $G$. A graph is {\em vertex-transitive} if for any pair of vertices $u$ and $v$ there exists an automorphism sending $u$ to $v$. A vertex-transitive graph is in particular regular.

A subset of vertices $S$ is a {\em dominating set} if each vertex is either in $S$ or adjacent to a vertex in $S$. In other words, for every vertex $u$, $S\cap N[u]$ is non-empty.
A vertex $c$ {\em separates} two vertices $u$ and $v$ if exactly one vertex among $u$ and $v$ is in the closed neighbourhood of $c$. In other words, $c\in N[u]\Delta N[v]$ where $\Delta$ denotes the symmetric difference of sets.
A subset of vertices $S$ is a {\em separating set} if it separates every pair of vertices of the graph.
A subset of vertices $C$ is an {\em identifying code} if it is both a dominating and separating set. In other words, the set $N[u]\cap C$ is non-empty and uniquely determines $u$.
There exists an identifying code in $G$ if and only if $G$ does not have two vertices $u$ and $v$ with $N[u]=N[v]$. We say that two such vertices $u$ and $v$ are {\em twin vertices} and we will only consider twin-free graphs. The size of a minimal identifying code of $G$ is denoted by $\ID(G)$. We have the following general bounds.

\begin{proposition}[Karpovsky, Chakrabarty and Levitin~\cite{KCL}, Gravier and Moncel~\cite{GM07}]\label{prop:galbound}
Let $G$ be a twin-free graph with at least one edge. We have
$$\log_2(|V|+1)\leq \ID(G) \leq |V|-1.$$
\end{proposition}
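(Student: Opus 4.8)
The plan is to establish the two inequalities separately, both via counting arguments on the sets $N[u]\cap C$ for $u\in V$.

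For the lower bound $\log_2(|V|+1)\le\ID(G)$: Let $C$ be an identifying code of $G$ with $|C|=\ID(G)$. The key observation is that the map $u\mapsto N[u]\cap C$ is injective on $V$, since $C$ is a separating set. The image of this map is a collection of $|V|$ distinct subsets of $C$. Moreover, because $C$ is a dominating set, none of these subsets is empty. Hence $|V|\le 2^{|C|}-1$, which rearranges to $|C|\ge\log_2(|V|+1)$. This step is short and essentially forced.

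For the upper bound $\ID(G)\le|V|-1$: The natural approach is to show that $V$ itself, or $V$ minus one vertex, is an identifying code. First note $V$ is trivially dominating and separating (any two vertices are separated by one of themselves, and $G$ is twin-free by hypothesis), so $\ID(G)\le|V|$. To shave off one vertex, I would argue that there exists a vertex $w$ such that $C=V\setminus\{w\}$ is still an identifying code. Domination of $C$ is where the hypothesis ``$G$ has at least one edge'' enters: one must choose $w$ so that $w$ still has a neighbour (so $w$ is dominated by $C$), and so that every other vertex retains a closed-neighbourhood element in $C$ (which is automatic since each vertex $v\ne w$ has $v\in C$). For separation: removing $w$ can only fail to separate a pair $u,v$ if $N[u]\Delta N[v]=\{w\}$, i.e. $u$ and $v$ become twins in $G-w$; one must pick $w$ avoiding all such ``near-twin'' obstructions. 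The main obstacle is verifying that a valid choice of $w$ always exists — this requires a short case analysis showing that if no such $w$ worked, then $G$ would have to contain actual twins or be edgeless, contradicting the hypotheses. I expect this to be the only genuinely nontrivial part, and it is the step I would write out most carefully.
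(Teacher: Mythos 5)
Your lower-bound argument is exactly the paper's: the sets $N[u]\cap C$ are distinct (separation) and non-empty (domination) subsets of $C$, so $|V|\le 2^{|C|}-1$, giving $\ID(G)\ge\log_2(|V|+1)$. That part is complete and correct.

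The upper bound, however, has a genuine gap. The paper does not prove it either; it cites Gravier and Moncel~\cite{GM07}, and your proposal essentially restates that theorem without proving it. Note that your reduction buys nothing: since any superset of an identifying code is again an identifying code, the inequality $\ID(G)\le|V|-1$ holds \emph{if and only if} $V\setminus\{w\}$ is an identifying code for some vertex $w$. So the "only genuinely nontrivial part" you defer -- showing that a valid choice of $w$ always exists in every twin-free graph with at least one edge -- is the entire content of the statement, and asserting that it follows from "a short case analysis" is not a proof; it is precisely the result of~\cite{GM07}, and the existence of extremal graphs attaining $|V|-1$ (characterized in~\cite{FGKNPV10}) shows the choice of $w$ can be heavily constrained, so the obstruction structure must be analysed for real. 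In addition, your list of obstructions is not quite right: removing $w$ can also fail to separate a pair involving $w$ itself, namely a vertex $v$ with $N[w]\Delta N[v]=\{w\}$; this case is not "$u,v$ become twins in $G-w$" (one of the pair is $w$), and while one can check it actually cannot occur (it would force $N[v]=N(w)$, hence $v\in N(w)$, contradicting $w\notin N[v]$), you neither notice it nor rule it out. As written, the proposal proves the lower bound and only re-poses the upper bound; to be complete you must either supply the existence argument for $w$ (the substance of~\cite{GM07}) or, as the paper does, invoke that reference explicitly.
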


The lower bound can be found by considering that in an identifying code $C$ of size $\ID(G)$, the sets $N[u]\cap C$ are all distinct and non-empty subsets  of a set of size $\ID(G)$. Both bounds are tight and graphs reaching the lower bound are described in~\cite{M06} whereas graphs reaching the upper bound are characterized in~\cite{FGKNPV10}.

When the maximum degree of the graph is small enough, the following lower bound is better than the previous one.

\begin{proposition}[Karpovsky, Chakrabarty and Levitin~\cite{KCL}]
Let $G$ be a graph of maximum degree $k$. We have
$$\ID(G)\geq \frac{2|V|}{k+1}.$$
\end{proposition}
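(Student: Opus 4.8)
The plan is a double-counting argument on the incidences between the vertices of $G$ and the elements of an identifying code; this is the standard route to lower bounds of this type. It suffices to prove the inequality with $|C|$ in place of $\ID(G)$ for an arbitrary identifying code $C$, and then specialise to a minimum one.

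First I would fix an identifying code $C$ and, for every vertex $v$, set $I(v)=N[v]\cap C$; then I count the pairs $(v,c)$ with $v\in V$, $c\in C$ and $c\in N[v]$. Counting these from the side of $C$: since $c\in N[v]$ is equivalent to $v\in N[c]$ and $|N[c]|\le k+1$ by the degree hypothesis, there are at most $(k+1)|C|$ such pairs. Counting the same pairs from the side of $V$, their number equals $\sum_{v\in V}|I(v)|$, and domination gives $|I(v)|\ge 1$ for every $v$. The estimate $\sum_{v}|I(v)|\ge|V|$ that follows from domination alone is too weak; the real point is to upgrade the coefficient $1$ to (essentially) $2$, and this is where the separating property of $C$ has to enter.

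The crux, and the only place the separating condition is genuinely used, is the claim that at most $|C|$ vertices $v$ satisfy $|I(v)|=1$. Indeed, if two distinct vertices $u$ and $v$ had $I(u)=I(v)=\{c\}$, then $c\in N[u]\cap N[v]$, hence $c\notin N[u]\,\Delta\,N[v]$ and $c$ would not separate $u$ from $v$, contradicting that $C$ is a separating set. So the values $I(v)$ taken over the vertices with $|I(v)|=1$ are pairwise distinct singletons, and $v\mapsto$ (the unique element of $I(v)$) is an injection of $\{v : |I(v)|=1\}$ into $C$. Therefore all but at most $|C|$ vertices have $|I(v)|\ge 2$, so $\sum_{v}|I(v)|\ge 2|V|-|C|$. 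Combining this lower bound with the upper bound $(k+1)|C|$ on the number of incidences and rearranging the resulting inequality yields the claimed bound on $|C|$, hence on $\ID(G)$; everything apart from the count of single-codeword vertices is routine bookkeeping.
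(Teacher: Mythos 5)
Your counting scheme is sound, and it is essentially an aggregated form of the discharging argument that the paper attributes to Karpovsky et al.\ (the paper does not reprove this proposition; it only reuses the method for the refinement in Proposition~\ref{prop:lower}). The key lemma is also right: since $C$ is separating, the sets $N[v]\cap C$ are pairwise distinct, so the vertices with $|N[v]\cap C|=1$ carry pairwise distinct singletons and number at most $|C|$. One phrasing quibble: from $I(u)=I(v)=\{c\}$ you should conclude that \emph{no} code vertex separates $u$ and $v$ (every $c'\in C\setminus\{c\}$ lies outside both closed neighbourhoods), not merely that $c$ fails to do so; the conclusion is immediate, but your sentence only rules out $c$.

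The genuine problem is the final step. From $\sum_{v\in V}|N[v]\cap C|\ge 2|V|-|C|$ and $\sum_{v\in V}|N[v]\cap C|=\sum_{c\in C}|N[c]|\le (k+1)|C|$, rearranging gives $2|V|\le (k+2)|C|$, i.e.\ $|C|\ge \frac{2|V|}{k+2}$ --- not the stated $\frac{2|V|}{k+1}$. The missing $+1$ cannot be recovered by tightening your estimates: to reach $\frac{2|V|}{k+1}$ you would need $\sum_v|N[v]\cap C|\ge 2|V|$, i.e.\ no vertex with a single code neighbour, which is false in general. In fact the inequality with $k+1$ in the denominator fails as stated: for an even cycle $\mathcal{C}_n$ ($k=2$) the paper itself records $\ID(\mathcal{C}_n)=\frac{n}{2}$, whereas $\frac{2n}{k+1}=\frac{2n}{3}$. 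The bound actually provable (and the one usually quoted from Karpovsky et al.) is $\ID(G)\ge \frac{2|V|}{k+2}$, which is tight for even cycles and is exactly what both your double counting and the paper's discharging (without the $|V_2|\le|C|-1$ refinement of Proposition~\ref{prop:lower}) deliver. So your method is correct, but you should state the conclusion as $\frac{2|V|}{k+2}$ and flag the denominator in the proposition as a misprint, rather than assert that the $k+1$ bound follows from your inequality.
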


Karpovsky \textit{et al.\ }prove this bound using a discharging method. We use the same method to obtain a tighter bound that we will need when $\ID(G)$ is smaller than the maximum degree of the graph.

\begin{proposition}\label{prop:lower}
Let $G=(V,E)$ be a twin-free graph of maximum degree $k$ and $C$ an identifying code of $G$ with $k\geq |C|+1$. We have
  $$|V|\leq \frac{|C|^2}{6}+\frac{(2k+5)|C|}{6}.$$
 \end{proposition}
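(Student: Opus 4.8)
The plan is to adapt the discharging argument of Karpovsky, Chakrabarty and Levitin, but to exploit the separating property of $C$ more carefully than in the bound $\ID(G)\ge 2|V|/(k+1)$. For a vertex $v$ write $d_C(v)=|N[v]\cap C|$; since $C$ is a dominating set, $d_C(v)\ge 1$ for every $v$. Assign to each vertex of $V$ a charge of $1$, and then have each vertex $v$ send a charge of $1/d_C(v)$ to each codeword of $N[v]\cap C$. The total charge is unchanged, equal to $|V|$, and it also equals $\sum_{c\in C} w(c)$ where $w(c)=\sum_{v\in N[c]} 1/d_C(v)$ is the charge landing on the codeword $c$. So it suffices to prove $w(c)\le (|C|+2k+5)/6$ for each $c\in C$ and then sum over $C$.

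To bound $w(c)$, the crucial point is that the map $v\mapsto N[v]\cap C$ is injective (that is exactly what ``$C$ is a separating set'' means), so distinct vertices $v$ with $v\in N[c]$ correspond to distinct subsets of $C$ containing $c$. Consequently at most one vertex of $N[c]$ has code-degree $1$, and at most $|C|-1$ vertices of $N[c]$ have code-degree $2$ (at most one per codeword $c'\ne c$, via the set $\{c,c'\}$). Let $n_j$ denote the number of vertices of $N[c]$ of code-degree $j$; then $n_1\le 1$, $n_2\le |C|-1$, all $n_j\ge 0$, and $\sum_{j\ge 1}n_j=|N[c]|\le k+1$. Since every vertex of code-degree at least $3$ contributes at most $1/3$ to $w(c)$,
\begin{align*}
 w(c)=n_1+\frac{n_2}{2}+\sum_{j\ge 3}\frac{n_j}{j}
 &\le n_1+\frac{n_2}{2}+\frac13\bigl(|N[c]|-n_1-n_2\bigr)\\
 &=\frac23\,n_1+\frac16\,n_2+\frac13\,|N[c]|
 \le \frac23+\frac{|C|-1}{6}+\frac{k+1}{3}=\frac{|C|+2k+5}{6}.
\end{align*}
Summing over the $|C|$ codewords gives $|V|=\sum_{c\in C}w(c)\le \frac{|C|^2}{6}+\frac{(2k+5)|C|}{6}$.

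There is no real obstacle here: the only substantive input is the combinatorial bound $n_2\le|C|-1$ coming from the separating property (this is the refinement over the cruder estimate used for $\ID(G)\ge 2|V|/(k+1)$), and everything after that is the elementary arithmetic displayed above. It is worth remarking that the hypothesis $k\ge|C|+1$ does not actually appear in this derivation; it merely marks the regime the authors care about — where $\ID(G)$ is small compared with the degree — in which this estimate is comparable to, and for $k$ a bit larger genuinely sharper than, the bound $\ID(G)\ge 2|V|/(k+1)$. When writing the final proof I would double-check that no hidden use of $k\ge|C|+1$ is needed, but I do not expect one.
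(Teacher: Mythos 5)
Your proof is correct and follows essentially the same route as the paper: the identical discharging scheme, with the same two key counts ($n_1\le 1$ and $n_2\le |C|-1$ from the injectivity of $v\mapsto N[v]\cap C$) and the same arithmetic yielding the per-codeword bound $\frac{|C|+2k+5}{6}$. Your side remark is also accurate: the paper's own computation likewise never substantively uses $k\ge|C|+1$; that hypothesis only marks the regime where the bound improves on $\ID(G)\ge 2|V|/(k+1)$.
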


\begin{proof}
We use the same discharging method as Karpovsky \textit{et al.\ }in~\cite{KCL}.
Each vertex receives a charge $1$ at the beginning.
Then each vertex $v$ gives to the vertices in $N[v]\cap C$ the charge $\frac{1}{|N[v]\cap C|}$.
After this process, only vertices of $C$ have a positive charge and the total charge is still $|V|$.

Let $c\in C$. Let $V_{i}$ be the set of vertices of $N[c]$ with exactly $i$ neighbours in $C$. Necessarily $|V_{1}|\leq 1$ since vertices in $V_{1}$ have only $c$ in their neighbourhood. We have $|V_2|\leq |C|-1$. Indeed, a vertex of $V_2$ has $c$ in its neighbourhood and a unique additional vertex of the code. But all the additional code neighbours of elements of $V_2$ must be different, hence there are at most $|C|-1$ vertices in $V_2$. Finally, there are $k+1-|V_1|-|V_2|$ other vertices giving charge at most $1/3$.
Therefore, $c$ receives a charge at most equal to $$|V_1|+\frac{|V_2|}{2}+\frac{k+1-|V_1|-|V_2|}{3}\leq 1+\frac{|C|-1}{2}+\frac{k-|C|+1}{3}=\frac{|C|}{6}+\frac{2k+5}{6}.$$ Hence the total charge $|V|$ is at most $\frac{|C|^2}{6}+\frac{(2k+5)|C|}{6}$.
\end{proof}

The concept of identifying codes is related to other concepts such as locating-dominat\-ing sets and resolving sets.
A {\em locating-dominating set} is a dominating set $S$ that separates the pairs of vertices that are not in $S$.
The size of a minimal locating-dominating set of $G$ is denoted by $\LD(G)$. Note that every graph admits a locating-dominating set since the whole set of vertices is one. An identifying code is always a locating-dominating set and one can obtain an identifying code from a locating-dominating set by adding at most $\LD(G)$ vertices. Therefore we have the following relations between $\LD(G)$ and $\ID(G)$.

\begin{proposition}[Gravier, Klasing and Moncel~\cite{GKM08}]\label{prop:IDLD}
Let $G$ be a twin-free graph. We have $$\LD(G)\leq \ID(G) \leq 2\LD(G).$$
\end{proposition}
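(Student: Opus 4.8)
The plan is to establish the two inequalities separately, using the definitional relationship between identifying codes and locating-dominating sets. For the first inequality $\LD(G) \leq \ID(G)$, the key observation is that every identifying code is itself a locating-dominating set. Indeed, let $C$ be an identifying code of minimum size $\ID(G)$. By definition $C$ is dominating, and it separates \emph{every} pair of distinct vertices of $G$; in particular it separates every pair of vertices not in $C$, which is all that a locating-dominating set is required to do. Hence $C$ is a locating-dominating set, and since $\LD(G)$ is the minimum size of such a set, $\LD(G) \leq |C| = \ID(G)$.

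For the second inequality $\ID(G) \leq 2\LD(G)$, I would start from a minimum locating-dominating set $S$ with $|S| = \LD(G)$ and show how to enlarge it into an identifying code by adding at most $\LD(G)$ further vertices. The set $S$ already dominates $G$ and separates all pairs of vertices outside $S$, so the only pairs that might fail to be separated by $S$ are pairs where at least one vertex lies in $S$. The idea is: for each vertex $s \in S$, the pairs involving $s$ that are not yet separated need to be handled; one adds, for each such $s$, at most one additional vertex to the code so that $s$ becomes distinguished from everything else. Since $|S| = \LD(G)$, this adds at most $\LD(G)$ vertices, giving a set $C$ with $|C| \leq 2\LD(G)$ that is dominating (it contains the dominating set $S$) and separating (every pair with both vertices outside $S$ is already separated by $S \subseteq C$, and every pair involving a vertex of $S$ is separated by the added vertices), hence an identifying code. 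Therefore $\ID(G) \leq |C| \leq 2\LD(G)$.

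The main obstacle is making the augmentation step in the second inequality precise: one must argue carefully that a single well-chosen extra vertex per element of $S$ suffices to separate that element from all other vertices of the graph simultaneously. A clean way to do this is to note that for $s \in S$, if $s$ is not separated from some vertex $v$ by the current code, then $N[s] \Delta N[v]$ must still contain a vertex not in the code; adding $s$ itself to the code (if it is not already "active" as a separator) handles the pair $\{s,v\}$ whenever $s \in N[s]\Delta N[v]$, i.e.\ whenever $v \notin N[s]$ or, symmetrically, $s \notin N[v]$ — and the remaining structure can be controlled so that one extra vertex per element of $S$ is enough. Since the statement only claims the bound $2\LD(G)$ and attributes it to~\cite{GKM08}, I would keep this step at the level of the counting argument above rather than optimizing the constant.
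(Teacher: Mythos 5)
Your first inequality is fine and matches the paper's (cited) justification: an identifying code separates all pairs, in particular all pairs outside the code, so it is a locating-dominating set and $\LD(G)\leq\ID(G)$.

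The second inequality is where there is a genuine gap. The paper itself does not prove it (it is quoted from~\cite{GKM08}, with the one-line reason that a locating-dominating set can be turned into an identifying code by adding at most $\LD(G)$ vertices), but your attempt to supply the argument rests on the claim that \emph{for each $s\in S$ one well-chosen extra vertex makes $s$ distinguished from everything else simultaneously}, and you yourself flag this as the unresolved ``main obstacle''. As stated, that per-element claim is not justified and need not hold: a vertex $s\in S$ can be unseparated by $S$ from several other vertices at once (at most one of them outside $S$, but possibly several inside $S$, e.g.\ a ``conflict class'' $\{s,s',s''\}\subseteq S$ with $N[s]\cap S=N[s']\cap S=N[s'']\cap S$), and a single added vertex need not lie in all the relevant symmetric differences $N[s]\Delta N[\cdot]$. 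Your aside about ``adding $s$ itself to the code'' is vacuous, since $s\in S$ is already in the code. The standard way to close the gap is a global count rather than a per-vertex one: partition $V$ into classes according to $N[\cdot]\cap S$; the LD property forces each class to contain at most one vertex outside $S$, hence a class of size $k$ contains at least $k-1$ vertices of $S$; twin-freeness guarantees that for any unseparated pair $x,y$ the set $N[x]\Delta N[y]$ is non-empty, so adding one such vertex refines the partition, and splitting all classes into singletons costs at most $\sum (k_i-1)\leq |S|$ added vertices. With that accounting the bound $\ID(G)\leq 2\LD(G)$ follows; without it, your augmentation step does not go through as described.
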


A {\em resolving set} is a subset of vertices $S$ such that for every pair of vertices $u$ and $v$, there exists a vertex $x$ in $S$ that satisfies $d(x,u)\neq d(x,v)$. The smallest size of a resolving set of $G$ is called the {\em metric dimension} and is denoted by $\beta(G)$. A locating-dominating set is always a resolving set and so $\beta(G)\leq\LD(G)$. When the diameter of the graph is $2$, the reverse is almost true: adding a vertex to a resolving set gives a locating dominating set.

\begin{proposition}\label{prop:LDRS}
Let $G$ be a graph of diameter $2$. We have
$$\beta(G)\leq \LD(G) \leq \beta(G)+1.$$
\end{proposition}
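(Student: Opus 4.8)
The plan is to prove the two inequalities separately. The left inequality $\beta(G)\le\LD(G)$ is the remark already stated in the text: a locating-dominating set is a resolving set. I would justify this in one line: if $S$ is locating-dominating and $c\in S$ lies in exactly one of $N[u],N[v]$ for a pair $u,v\notin S$, then $c\neq u,v$, so $c$ is at distance $1$ from one of $u,v$ and at distance $\ge 2$ from the other; and any pair meeting $S$ is resolved by a vertex of itself. (This direction needs no diameter hypothesis.)

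For the right inequality I would start from a minimum resolving set $S$ with $|S|=\beta(G)$ and enlarge it by at most one vertex to obtain a locating-dominating set. The engine is that diameter $2$ collapses all relevant distances: for $x\in S$ and $u\notin S$ we have $d(x,u)\in\{1,2\}$, with $d(x,u)=1$ if and only if $x\in N(u)$, so the distance vector of $u$ over $S$ records exactly the set $N(u)\cap S$.

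First I would fix domination. A vertex $u\notin S$ is undominated by $S$ precisely when $N(u)\cap S=\emptyset$, i.e. when its distance vector over $S$ is all $2$'s; since $S$ resolves, at most one such vertex $u_0$ exists. Set $S'=S\cup\{u_0\}$ if $u_0$ exists and $S'=S$ otherwise, so $|S'|\le\beta(G)+1$ and $S'$ dominates. Then I would check separation: for $u,v\notin S'$ (hence $u,v\notin S$ and neither equal to $u_0$), pick $x\in S$ with $d(x,u)\ne d(x,v)$, say $d(x,u)=1$ and $d(x,v)=2$; then $x\in N(u)$ while $x\neq v$ and $x\notin N(v)$, so $x\in N[u]\setminus N[v]$, i.e. $x\in S'$ separates $u$ and $v$. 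Therefore $S'$ is a locating-dominating set and $\LD(G)\le|S'|\le\beta(G)+1$.

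I expect the only mild obstacle to be the bookkeeping around the possibly-undominated vertex: one must observe that it is unique, that adjoining it restores domination, and that it neither participates in nor interferes with the separation check (which it does not, since once $u_0$ is added, the pairs still to be separated automatically lie outside $S$). Everything else is the routine dictionary between distances in $\{1,2\}$ and membership in open neighbourhoods.
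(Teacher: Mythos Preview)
Your proof is correct and follows essentially the same approach as the paper's own proof: both take a minimum resolving set $S$, observe that in diameter~$2$ the distance vector over $S$ of any $u\notin S$ lies in $\{1,2\}^{|S|}$ and encodes $N(u)\cap S$, note that the all-$2$ vector can occur for at most one vertex $u_0$, and check that $S'=S\cup\{u_0\}$ is locating-dominating. The only cosmetic difference is that the paper phrases separation via distinctness of the sets $N[u]\cap S$, while you phrase it by exhibiting a separating vertex $x\in S$; these are equivalent.
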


\begin{proof}
The first part is true for any graph since a locating-dominating set is a resolving set.
Now let $S$ be a resolving set of a graph $G$ of diameter $2$. Order the vertices of $S=\{x_1,...,x_s\}$.
For every vertex $u$, let $L(u)=(d(u,x_1),...,d(u,x_s))$ be the distance vector to vertices of $S$.
Since $S$ is a resolving set, all the vectors $L(u)$ are distinct. Since the diameter is $2$,  $L(u)\in \{0,1,2\}^s$. But at most one vertex $u_0$ can have $L(u_0)=(2,2,...,2)$, hence all vertices except $u_0$ are dominated by a vertex of $S$. Therefore, the set $S'=S\cup\{u_0\}$ is a dominating set. Let $u$ be a vertex not in $S'$. It has only values $1$ and $2$ in its vector $L(u)$ and the set $N[u]\cap S$ is given by the value $1$ in $L(u)$. Hence all the sets $N[u]\cap S$ for $u\notin S'$ are distinct. Therefore, all the sets $N[u]\cap S'$ are also distinct for $u\notin S'$ and $S'$ is a locating-dominating set. In particular, $\LD(G) \leq \beta(G)+1$.
\end{proof}

Proposition~\ref{prop:IDLD} together with Proposition~\ref{prop:LDRS} gives a relation between $\ID(G)$ and the metric dimension in graphs of diameter $2$. In particular, they have the same order and let us derive results for identifying codes from results for resolving sets.

\begin{corollary}\label{cor:MDtoIC}
Let $G$ be a twin-free graph of diameter $2$. We have
$$\beta(G)\leq \ID(G) \leq 2\beta(G)+2.$$
\end{corollary}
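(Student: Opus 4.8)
The plan is to chain together the two preceding propositions. The statement to be proved, Corollary~\ref{cor:MDtoIC}, asserts that for a twin-free graph $G$ of diameter $2$ we have $\beta(G)\leq \ID(G)\leq 2\beta(G)+2$. I would obtain the lower bound $\beta(G)\leq \ID(G)$ by combining the left inequality of Proposition~\ref{prop:IDLD}, namely $\beta(G)\leq\LD(G)$ — wait, that is actually stated just before Proposition~\ref{prop:LDRS} and reproved there as its first part — with... hmm, let me restructure.

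First I would invoke Proposition~\ref{prop:LDRS}, which applies precisely because $G$ has diameter $2$: it gives $\beta(G)\leq \LD(G)\leq \beta(G)+1$. Next I would invoke Proposition~\ref{prop:IDLD}, which applies because $G$ is twin-free: it gives $\LD(G)\leq \ID(G)\leq 2\LD(G)$. For the lower bound, I chain $\beta(G)\leq\LD(G)\leq\ID(G)$. For the upper bound, I chain $\ID(G)\leq 2\LD(G)\leq 2(\beta(G)+1)=2\beta(G)+2$. That is the whole argument.

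The only thing to check is that the hypotheses line up: Proposition~\ref{prop:LDRS} needs diameter $2$, which we have; Proposition~\ref{prop:IDLD} needs twin-freeness, which we have. So there is essentially no obstacle here — the corollary is a one-line consequence of the two propositions, and the point of stating it separately is to package the relation for later use in Section~\ref{sec:srg}, where strongly regular graphs (which have diameter $2$) let us transfer metric-dimension bounds to identifying-code bounds. I would write the proof as: ``By Proposition~\ref{prop:LDRS}, $\beta(G)\leq \LD(G)\leq \beta(G)+1$, and by Proposition~\ref{prop:IDLD}, $\LD(G)\leq \ID(G)\leq 2\LD(G)$. Combining these yields $\beta(G)\leq \LD(G)\leq \ID(G)$ and $\ID(G)\leq 2\LD(G)\leq 2\beta(G)+2$.''
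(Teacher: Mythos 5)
Your proposal is correct and is exactly the paper's argument: the corollary is obtained by chaining Proposition~\ref{prop:LDRS} (using diameter $2$) with Proposition~\ref{prop:IDLD} (using twin-freeness), giving $\beta(G)\leq\LD(G)\leq\ID(G)$ and $\ID(G)\leq 2\LD(G)\leq 2\beta(G)+2$. No gaps; the hypotheses are checked appropriately.
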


\section{Fractional relaxation}\label{sec:frac}

The problem of finding a minimal identifying code in a graph $G$ can be expressed as a hitting set problem. Indeed an identifying code is a subset of $V$ that intersects all the sets $N[u]$ and $N[u]\Delta N[v]$ for $u,v\in V$. In other words, the problem of finding a minimal identifying code is equivalent to the following linear integer program $P_G$.

\renewcommand{\arraystretch}{2}
\begin{math}
\begin{array}{rclrc}
   \text{Minimize } & \displaystyle\sum_{x_u \in V} x_u &&&\\
   \text{such that } & \displaystyle \sum_{w\in N[u]} x_w &\geq 1 &  \forall u \in V& \text{(domination)} \\
   & \displaystyle \sum_{w\in N[u]\Delta N[v]} x_w &\geq 1 &  \forall u,v \in V, u\neq v & \text{(separation)} \\
    &x_u \in \{0,1\}&& \forall u \in V\\
\end{array}
\end{math}

Let us denote by $P^*_G$ the linear programming fractional relaxation of $P_G$ where the integrality condition $x_u \in \{0,1\}$ is replaced by a linear constraint $0\leq x_u\leq 1$ for all vertices $u\in V$.
The optimal value of $P^*_G$, denoted by $\IDf(G)$, gives an estimation on $\ID(G)$ within a logarithmic factor.

\begin{proposition}\label{prop:fracbound}
Let $G$ be a twin-free graph with at least three vertices. We have
$$\IDf(G)\leq \ID(G) \leq \IDf(G)(1+2\ln{|V|}).$$
\end{proposition}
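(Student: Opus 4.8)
The plan is to prove the two inequalities separately. The left one, $\IDf(G)\le\ID(G)$, is immediate: the $0/1$ indicator vector of any identifying code is a feasible point of the relaxation $P^*_G$, so its optimum is at most $\ID(G)$. For the right inequality I would do randomized rounding of an optimal fractional solution, followed by a deterministic repair step. First I record two elementary facts about $P_G$: (i) it has at most $|V|+\binom{|V|}{2}\le |V|^2$ constraints, and since $G$ is twin-free every separation set $N[u]\Delta N[v]$ is nonempty, so every constraint has the form $\sum_{w\in S}x_w\ge 1$ with $S\neq\emptyset$; (ii) $\IDf(G)\ge 1$, because a single domination constraint already forces $\sum_u x_u\ge 1$. (This is also where the hypothesis $|V|\ge 3$ is used, only to make the factor $1+2\ln|V|$ a genuine, $>1$, bound.)

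Now fix an optimal solution $x^*$ of $P^*_G$ and set $t=2\ln|V|$. Form a random set $C\subseteq V$ by placing each vertex $u$ into $C$ independently with probability $\min(1,t x^*_u)$. Then $\mathbb{E}[|C|]=\sum_u\min(1,t x^*_u)\le t\sum_u x^*_u=t\,\IDf(G)$. For a fixed constraint with vertex set $S$: if some $w\in S$ has $t x^*_w\ge 1$ the constraint is satisfied with probability $1$; otherwise, using $1-y\le e^{-y}$, the probability it is violated is $\prod_{w\in S}(1-t x^*_w)\le e^{-t\sum_{w\in S}x^*_w}\le e^{-t}=|V|^{-2}$. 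Hence the expected number of violated constraints is at most $|V|^2\cdot|V|^{-2}=1$. Repair $C$: for each violated constraint, add one of its (existing, since the set is nonempty) vertices, obtaining $C'$. Because each constraint $\sum_{w\in S}x_w\ge 1$ is monotone in the indicator vector, adding vertices never destroys an already satisfied constraint, so $C'$ satisfies all constraints of $P_G$, i.e. $C'$ is an identifying code, in every outcome. Moreover $|C'|\le|C|+(\text{number of violated constraints})$, so $\mathbb{E}[|C'|]\le t\,\IDf(G)+1\le 2\ln|V|\cdot\IDf(G)+\IDf(G)=(1+2\ln|V|)\,\IDf(G)$. Since a random variable is at least its minimum, some outcome yields an identifying code of size at most this expectation, which gives $\ID(G)\le(1+2\ln|V|)\,\IDf(G)$.

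The delicate points are the two probabilistic estimates in the middle paragraph — in particular the truncation of the rounding probabilities at $1$, which is exactly what keeps the $e^{-t}$ union bound valid even when some coordinates of $x^*$ are large — together with the monotonicity observation that makes the repair step produce a feasible solution; the closing averaging is the standard probabilistic-method conclusion. As an alternative avoiding randomization, the same bound follows from the classical analysis of the greedy algorithm for (weighted) set cover (Lovász, Johnson, Chvátal, Stein): viewing $P_G$ as hitting a family of $m\le|V|^2$ sets by vertices, the greedy output has cost at most $H_m\le 1+\ln m\le 1+2\ln|V|$ times the fractional optimum.
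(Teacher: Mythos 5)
Your proof is correct, but it takes a genuinely different route from the paper. The paper also reduces the problem to hitting the hypergraph with hyperedges $N[u]$ and $N[u]\Delta N[v]$, but then it simply invokes Lov\'asz's theorem on the ratio of optimal integral and fractional covers, $\ID(G)\le \IDf(G)(1+\ln r)$, where $r$ is the maximum \emph{degree} of that hypergraph; a short count shows a vertex of degree $k$ lies in $(|V|-k)(k+1)$ hyperedges, so $r\le (|V|+1)^2/2\le |V|^2$ for $|V|\ge 3$, giving the factor $1+2\ln|V|$. You instead give a self-contained probabilistic argument: randomized rounding with probabilities $\min(1,2\ln|V|\cdot x^*_u)$, a union bound over the at most $|V|^2$ constraints, a monotone repair step, and the extra observation $\IDf(G)\ge 1$ to absorb the additive $+1$ into the multiplicative factor -- an ingredient the paper's route does not need. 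Your closing remark about the greedy/set-cover analysis is essentially the paper's argument (Lov\'asz's bound \emph{is} the greedy analysis), though you phrase it with $H_m$, $m\le|V|^2$ being the number of constraints, rather than with the maximum degree. What each approach buys: the paper's proof is shorter by black-boxing a classical theorem and, via the degree computation, actually yields the slightly sharper constant $1+\ln\bigl((|V|+1)^2/2\bigr)$; yours is elementary and self-contained, works verbatim for any hitting-set LP with nonempty constraint sets, and makes explicit where twin-freeness (nonempty separation sets, hence repairability/feasibility) enters. One cosmetic inaccuracy: your parenthetical claim that $|V|\ge 3$ is what makes the bound meaningful is not really where the hypothesis matters -- your argument goes through for $|V|=2$ as well -- but since the hypothesis is given, this is harmless.
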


\begin{proof}
The first inequality is trivial since $P^*_G$ is a relaxation of $P_G$.
Let $\mathcal H$ be the hypergraph with vertex set $V$ and hyperedge set $$\mathcal E=\{N[u]\ | \ u\in V\}\cup \{N[u]\Delta N[v] \ | \ u\neq v \in V\}.$$
The identifying code problem in $G$ is equivalent to the covering problem in $\mathcal H$ that is the problem of finding a set of vertices of minimum size that intersects all the hyperedges. The linear programming formulations are the same. Using the result of Lov\'asz~\cite{L75} on the ratio of optimal integral and fractional covers, we have
$$\ID(G) \leq \IDf(G)(1+\ln{r})$$
where $r$ is the maximal degree of $\mathcal H$, i.e.\ the maximum number of hyperedges a vertex is belonging to.
Let $u\in V$ and $k$ its degree in $G$. Then $u$ is in $k+1$ hyperedges of the form $N[v]$ and in $(|V|-k-1)(k+1)$ hyperedges of the form $N[v]\Delta N[w]$. Indeed, we must have $v\in N[u]$ and $w\notin N[u]$. Hence the degree of $u$ in $\mathcal H$ is $(|V|-k)(k+1)$.
The maximal value of $(|V|-k)(k+1)$ with $0\leq k \leq |V|-1$ is obtained for $k=\frac{|V|-1}{2}$.
Therefore,  $r\leq \frac{(|V|+1)^2}{2} \leq |V|^2$ for $|V|\geq 3$ which leads to the upper bound of the proposition.
\end{proof}

In the case of vertex-transitive graphs, we can compute the exact value of $\IDf$.

\begin{proposition}\label{prop:idftransitive}
Let $G$ be a twin-free vertex-transitive graph. Let $k$ denote the degree of $G$ and let $d$ denote the smallest size of symmetric differences of closed neighbourhoods $N[u]\Delta N[v]$ among all the pairs of distinct vertices $u,v$. We have
$$\IDf(G)=\frac{|V|}{\min(k+1,d)}.$$
In particular
$$\frac{|V|}{\min(k+1,d)} \leq \ID(G) \leq \frac{|V|(1+2\ln{|V|})}{\min(k+1,d)}.$$
\end{proposition}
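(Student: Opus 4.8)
The plan is to sandwich $\IDf(G)$ between $\frac{|V|}{\min(k+1,d)}$ from both sides, using a uniform feasible primal solution for the upper bound and a symmetrization argument for the lower bound.

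First I would establish $\IDf(G)\le \frac{|V|}{\min(k+1,d)}$ by exhibiting an explicit feasible point of $P^*_G$. Set $x_u=\frac{1}{\min(k+1,d)}$ for every $u\in V$; this lies in $[0,1]$. For the domination constraint at $u$, $\sum_{w\in N[u]}x_w=\frac{|N[u]|}{\min(k+1,d)}=\frac{k+1}{\min(k+1,d)}\ge 1$, using that $G$ is $k$-regular. For the separation constraint at a pair $u\neq v$, $\sum_{w\in N[u]\Delta N[v]}x_w=\frac{|N[u]\Delta N[v]|}{\min(k+1,d)}\ge\frac{d}{\min(k+1,d)}\ge 1$ by definition of $d$. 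The objective value of this solution is $\frac{|V|}{\min(k+1,d)}$, giving the upper bound.

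For the reverse inequality I would symmetrize an optimal solution over the automorphism group. Let $\Gamma=\mathrm{Aut}(G)$, which is finite since $G$ is, and let $x^*$ be an optimal solution of $P^*_G$. For each $\gamma\in\Gamma$, the vector $x^\gamma$ defined by $x^\gamma_u=x^*_{\gamma(u)}$ is again feasible: since $\gamma$ is an automorphism, $\gamma(N[u])=N[\gamma(u)]$ and $\gamma(N[u]\Delta N[v])=N[\gamma(u)]\Delta N[\gamma(v)]$, so every constraint for $x^\gamma$ is a constraint for $x^*$ after relabelling. The average $\bar x=\frac{1}{|\Gamma|}\sum_{\gamma\in\Gamma}x^\gamma$ is then feasible (a convex combination of feasible points, so still in $[0,1]$) and has the same objective value $\IDf(G)$. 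By vertex-transitivity $\bar x$ is constant, say $\bar x_u=c$ for all $u$, whence $c|V|=\IDf(G)$. Feasibility of $\bar x$ applied to a domination constraint gives $c(k+1)\ge 1$, and applied to a separation constraint realizing the minimum $d$ gives $cd\ge 1$; hence $c\ge\max\!\big(\tfrac{1}{k+1},\tfrac{1}{d}\big)=\frac{1}{\min(k+1,d)}$, so $\IDf(G)=c|V|\ge\frac{|V|}{\min(k+1,d)}$. Combining the two bounds yields the equality, and the ``in particular'' statement is then immediate by substituting this value into Proposition~\ref{prop:fracbound}.

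The only genuinely delicate point is the symmetrization step: one must check that precomposition by an automorphism maps the feasible region of $P^*_G$ to itself, which reduces to the fact that automorphisms preserve closed neighbourhoods and hence also their symmetric differences. Everything else is routine substitution; I do not anticipate a real obstacle, though one should note the tacit hypothesis that $G$ has at least three vertices (equivalently, at least one edge in a twin-free vertex-transitive graph) for the final double inequality, inherited from Proposition~\ref{prop:fracbound}.
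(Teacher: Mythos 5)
Your proof is correct and takes essentially the same approach as the paper: the uniform assignment $x_u=\frac{1}{\min(k+1,d)}$ gives the upper bound, and a symmetrization argument produces a constant optimal solution of $P^*_G$ for the lower bound. The only (harmless) difference is that you average the orbit of one optimal solution under $\mathrm{Aut}(G)$, whereas the paper averages the extreme optimal solutions and gets constancy by contradiction; your write-up also spells out the final step (feasibility of the constant solution forces $c\ge \frac{1}{\min(k+1,d)}$), which the paper leaves implicit.
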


\begin{proof}
Giving to each variable $x_u$ the value $\frac{1}{\min(k+1,d)}$ leads to a feasible solution of $P^*_G$, hence
$$\IDf(G)\leq\frac{|V|}{\min(k+1,d)}.$$

Since $G$ is a vertex-transitive graph, all the vertices play the same role. Consider the finite set $\mathcal{S}$ of extreme optimal solutions (solutions that are vertices of the polytope defined by $P^*$). Any linear combination  of elements of $\mathcal{S}$, with the sum of coefficients equal to $1$ is still an optimal solution of $P^*$. In particular, ${\bf x}=\frac{1}{|S|}\sum_{s\in \mathcal S}s$ is an optimal solution. We claim that all the components of ${\bf x}$ are equal. Indeed, assume that $x_u\neq x_v$ and let $\varphi$ be an automorphism sending $u$ to $v$. Let $s\in \mathcal S$, then $\varphi(s)$ and $\varphi^{-1}(s)$, obtained by permuting the value inside $s$ following the automorphism $\varphi$ are still extreme optimal solutions. Hence $\mathcal S$ is stable by $\varphi$ and so $\varphi({\bf x})={\bf x}$, a contradiction since $x_u\neq x_v$.
\end{proof}

\section{Known results on vertex-transitive graphs}\label{sec:previous}

We review some known results on classes of transitive graphs. In particular, we discuss the gap between $\ID$ and $\IDf$. Sometimes, not only identifying codes but also {\em $r$-identifying codes} have been studied in these classes. Instead of using the closed neighbourhoods, that are the balls of radius $1$, one consider the balls of radius $r$ to identify the vertices. It is equivalent to consider $r$-identifying codes in a graph $G$ or to consider identifying codes in $G^r$, the $r^{\mathrm{th}}$-power of $G$, obtained by adding edges between each pair of vertices of $G$ that are at distance at most $r$. In the following, we will express the results in terms of identifying codes in the power graph.

\subsection{Cycles}

We first consider cycles and powers of cycles.
Let $n,r\in \mathbb N$ with $n\geq 5$ and $1\leq r<\frac{n-1}{2}$. The cycle on $n$ vertices, $\mathcal C_n$, has vertex set $V=\{0,1,...,n-1\}$ and two distinct vertices $i$ and $j$ are adjacent if $|i-j|=1$ (modulo $n$). The graph $\mathcal C_n^r$ is vertex-transitive with vertex degree $2r$. The smallest symmetric difference of closed neighbourhoods has size $2$. It is obtained via two consecutive vertices $i$ and $i+1$ whose symmetric difference of closed neighbourhoods is the set $\{i-r,i+r+1\}$ (modulo $n$). Hence the optimal value of fractional identifying codes is $\IDf(\mathcal C_n^r)=\frac{n}{2}$.

On the other hand, the study of integer identifying codes in powers of cycles had taken several years (see e.g.~\cite{BCHL04,GMS06,XTH08}) before being completed by Junnila and Laihonen~\cite{JL12}. We have the following results.
If $n$ is even and at least $2r+4$, then $$\ID(\mathcal C_n^r)=\frac{n}{2}=\IDf(\mathcal C_n^r).$$

If $n$ is odd and at least $2r+3$, then
$$\frac{n+1}{2}\leq \ID(\mathcal C_n^r)\leq \frac{n+1}{2}+r.$$

In particular, the difference between $\ID(\mathcal C_n^r)$ and $\IDf(\mathcal C_n^r)$ is bounded by $r$. Hence the ratio is converging to $1$ when $r$ is fixed and $n$ is large.

When $n=2r+2$, $\mathcal C_n^r$ is a complete graph where a perfect matching is removed and we have $\ID(\mathcal C_n^r)=n-1$. Then $\frac{\ID(\mathcal C_n^r)}{\IDf(\mathcal C_n^r)} \to 2$ when $n$ is large. Finally, if $n=2r+3$, $\ID(\mathcal C_{n}^{r})=\lfloor \frac{2n}{3} \rfloor$ and the ratio is converging to $4/3$.

\subsection{Hypercubes}

Let $\ell\geq 3$. The vertex set of the hypercube of dimension $\ell$, denoted by $\mathcal H_\ell$, is the set of binary words of length $\ell$, $\{0,1\}^\ell$. Two vertices are adjacent if the corresponding words differ on exactly one letter.
Clearly, $\mathcal H_\ell$ is vertex-transitive with vertex degree $k=\ell$. The smallest symmetric difference of closed neighbourhoods has size $d=2\ell-2$ and is obtained via two adjacent vertices. Hence, by Proposition~\ref{prop:idftransitive}, $$\IDf(\mathcal H_\ell)=\frac{2^\ell}{\ell+1}.$$

Computing the exact value of $\ID(\mathcal H_\ell)$ seems difficult and only few exact values are known. However, we have the following bounds (see~\cite[Theorem 4]{EJLR08} for the upper bound and~\cite{KCL} for the lower bound)
\begin{equation*}\label{eq:r1}
\frac{\ell2^{\ell+1}}{\ell(\ell+1)+2}\leq \ID(\mathcal H_\ell)\leq \frac{9}{2}\cdot\frac{2^\ell}{\ell+1}.
\end{equation*}

Hence the optimal values of integer and fractional identifying codes have the same order and the ratio satisfies

$$2-\frac{4}{\ell(\ell+1)+2}\leq \frac{\ID(\mathcal H_\ell)}{\IDf(\mathcal H_\ell)}\leq \frac{9}{2}.$$

Let $1<r<\ell$. We now consider $r$-identifying codes or equivalently identifying codes in $\mathcal H_\ell^r$. The graph $\mathcal H_\ell^r$ is still vertex-transitive. The degree of the vertices is $k=\sum_{i=1}^r{\ell\choose i}$. The smallest symmetric difference of closed neighbourhoods has now size $d=2{\ell-1 \choose r}$ and is still obtained via two adjacent vertices of $\mathcal H_\ell$. Thus, by Proposition~\ref{prop:idftransitive},
 $$\IDf(\mathcal H_\ell^r)=\frac{2^{\ell}}{\min\left(\sum_{i=0}^r{\ell\choose i},2{\ell-1\choose r}\right)}.$$

Concerning the general behaviour of $\ID(\mathcal H_\ell^r)$, we consider two cases: $r$ is fixed or $r$ is linearly dependent on $\ell$. Assume first that $r$ is fixed and $\ell$ is large. The bounds given by Karpovsky \textit{et al.\ }\cite{KCL} can be translated as follows. There are two constants $\alpha$ and $\beta$ (depending on $r$) such that, for large $\ell$,

\begin{equation}\label{eq:withcst}
\alpha\cdot\frac{2^\ell}{\ell^r} \leq \ID(\mathcal H_\ell^r) \leq \beta \cdot \frac{2^\ell}{\ell^r}.
\end{equation}

Thus $\ID(\mathcal H_\ell^r)$ and $\IDf(\mathcal H_\ell^r)$ have the same order, that is $2^\ell/\ell^r$.

\

Assume now that $r=\lfloor \rho \ell\rfloor$ for some constant $\rho$. Honkala and Lobstein~\cite{HL02} proved that $$\lim_{\ell\to \infty} \frac{ \log_2{\ID(\mathcal H_\ell^r)}}{\ell}=1-h(\rho)$$
where $h(x)=-x\log_2(x)-(1-x)\log_2(1-x)$ is the binary entropy function.
This result can be proved with Proposition~\ref{prop:fracbound}. Indeed,  $\frac{\log_2{\sum_{i=0}^r{\ell\choose i}}}{\ell}$ and $\frac{\log_2{{\ell \choose r}}}{\ell}$ tend to $h(\rho)$. Hence
$$\lim_{\ell\to \infty} \frac{\log_2{\IDf(\mathcal H_\ell^{r})}}{\ell}=1-h(\rho)$$
and
$$\lim_{\ell\to \infty} \frac{\log_2\left(\IDf(\mathcal H_\ell^{r})(1+2\ln{2^\ell})\right)}{\ell}=1-h(\rho).$$
But we do not know if $\ID(\mathcal H_\ell^r)$ and $\IDf(\mathcal H_\ell^r)$ have the same order in this case.

\subsection{Product of graphs}\label{sec:product}
One can easily obtain other vertex-transitive graphs by forming products of vertex-transitive graphs such as cliques. This was already the case for the hypercube which is the Cartesian product of $\ell$ cliques of size~$2$.
Identifying codes in the following products of graphs have been recently considered. Let $G=(V_G,E_G)$ and $H=(V_H,E_H)$ be two graphs. All the products we are using have vertex set $V_G\times V_H$. We follow notation and terminology of \cite{graphproduct}.

\begin{itemize}
\item For the {\em Cartesian product} $G\square H$, two vertices $(u_G,u_H)$ and $(v_G,v_H)$ are adjacent if either $u_G=v_G$ and $u_Hv_H\in E_H$ or $u_H=v_H$ and $u_Gv_G\in E_G$.
\item For the {\em direct product} $G\times H$, two vertices $(u_G,u_H)$ and $(v_G,v_H)$ are adjacent if $u_Gv_G\in E_G$ and $u_Hv_H\in E_H$.
\item For the {\em lexicographic product} $G\circ H$, two vertices $(u_G,u_H)$ and $(v_G,v_H)$ are adjacent if either $u_Gv_G\in E_G$ or $u_G=v_G$ and $u_Hv_H\in E_H$.
\end{itemize}

 \bigskip\noindent \textbf{Cartesian product of two cliques.} Let $2\leq p\leq q$ be integers. The Cartesian product $K_p\square K_q$ of two cliques is a vertex-transitive graph with vertex degree $k=p+q-1$. The smallest symmetric difference of closed neighbourhoods has size $d=2p-2$ and is obtained via two adjacent vertices. By Proposition~\ref{prop:idftransitive}, $$\IDf(K_p\square K_q)=\frac{pq}{2p-2}.$$

Identifying codes in $K_p\square K_q$ have been studied by Gravier, Moncel and Semri~\cite{GMS08} and by Goddard and Wash~\cite{GW13}. They proved that
\[
\ID(K_p\square K_q)=
\begin{cases}
q+\lfloor \frac{p}{2} \rfloor & \text{if } q\leq \frac{3p}{2}\\
2q-p & \text{if } q\geq \frac{3p}{2} \\
\end{cases}
\]

Therefore, the ratio between the optimal values of integer and fractional identifying codes is

\[
\frac{\ID(K_p\square K_q)}{\IDf(K_p\square K_q)}=
\begin{cases}
2+\frac{p}{q}-\frac{2}{p}-\frac{1}{q} & \text{if } q\leq \frac{3p}{2}\\
4-\frac{2p}{q}-\frac{4}{p}+\frac{2}{q} & \text{if } q\geq \frac{3p}{2} \\
\end{cases}
\]

In particular, it is bounded by a constant. 

Note that the metric dimension of Cartesian product of graphs and in particular of cliques have been studied by C\'aceres {\em et al.}~\cite{CHMPPSW07}.

 \bigskip\noindent \textbf{Direct product of cliques.} Let $2\leq p \leq q$ be integers. The direct product $K_p\times K_q$ of two cliques is a vertex-transitive graph with vertex degree $k=(p-1)(q-1)$. The smallest symmetric difference of closed neighbourhoods has size $d=2p$ and is obtained via two vertices belonging to the same copy of $K_q$. By Proposition~\ref{prop:idftransitive},
\[
\IDf(K_p\times K_q)=
\begin{cases}
\frac{q}{2} & \text{if $p\geq 4$ or $q>p$}\\
\frac{pq}{(p-1)^2+1} & \text{if $p\leq 3$ and $p=q$}.\\
\end{cases}
\]

Rall and Wash~\cite{RW14} gave the exact size of optimal identifying codes in $K_p\times K_q$. Except the small values of $p$ and $q$, there are two main cases. If $p\geq 3$ and $q\geq 2p$, then $\ID(K_p\times K_q)=q-1$. If $p\geq 5$ and $q<2p$, then $\ID(K_p\times K_q)$ is either $\lfloor \frac{2(p+q)}{3}\rfloor$ or $\lceil \frac{2(p+q)}{3}\rceil$ depending on the value of $p+q$ modulo $3$. Therefore, the ratio between the optimal values of integer and fractional identifying codes is either $2-2/q$ or $4/3(1+p/q)$, and it is again bounded.

 \bigskip\noindent \textbf{Lexicographic product of graphs.}
Let $G$ and $H$ be two vertex-transitive graphs that are not complete graphs. Then $G\circ H$ is also vertex-transitive. If $G$ (respectively $H$) has vertex degree $k_G$ (resp. $k_H$) and $n_G$ (resp. $n_H$) vertices, then $G\circ H$ has $n_Gn_H$ vertices and vertex degree $k=k_Gn_H+k_H$. Moreover, the size of the smallest symmetric difference of closed neighbourhoods of $G\circ H$ and $H$ are equal. Hence $$\IDf(G\circ H)=\frac{n_Gn_H}{d_H}$$ where $d_H$ is the smallest symmetric difference of closed neighbourhoods of $H$.

Assume that $G$ does not have two vertices $u$ and $v$ such that $N(u)=N(v)$. Feng \textit{et al.\ }\cite{FXW12} proved that in this case $$\ID(G\circ H)=n_Gs_H$$ where $s_H$ is the minimum size of a separating set of $H$. Hence we have
$$\frac{\ID(G\circ H)}{\IDf(G\circ H)}=\frac{s_Hd_H}{n_H}.$$
If $H$ is such that $k_H+1\geq d_H$, then $\IDf(H)=\frac{n_H}{d_H}$. Since $s_H$ is either equal to $\ID(H)$ or $\ID(H)-1$, the ratio between $\ID(G\circ H)$ and $\IDf(G\circ H)$ is the same as the ratio between $\ID(H)$ and $\IDf(H)$. In particular, if we have a ratio $\alpha$ for a graph $H$ we can obtain graphs with arbitrary sizes and still ratio $\alpha$.

\section{Strongly regular graphs}\label{sec:srg}
\subsection{General remarks}

The bound of Proposition \ref{prop:idftransitive} is helpful when the symmetric differences are large (larger than $\ln{|V|}$). For this reason, we now focus on the family of {\em strongly regular graphs} for which the smallest symmetric difference has, in most cases, size at least $\sqrt{|V|}$ (see Proposition~\ref{prop:bounddsrg}).

A {\em strongly regular graph}  $\srg(n,k,\lambda,\mu)$ is a $k$-regular graph $G$ on $n$ vertices for which any pair of adjacent (respectively non-adjacent) vertices have exactly $\lambda$ (resp. $\mu$) neighbours in common.

The four parameters are related in the following way
    \begin{equation}\label{eq:relationparam}
(n-k-1)\mu = k(k-\lambda-1).
\end{equation}

This relation can be proved by considering one particular vertex $u$ and the partition of $V\setminus \{u\}$ between the neighbours $A=N(u)$ and the non-neighbours $B=V\setminus N[u]$ of $u$. The number of edges between $A$ and $B$ is $(n-k-1)\mu = k(k-\lambda-1)$.

The complement of a strongly regular graph is still a  strongly regular graph and has parameters $\srg(n,n-1-k,n-2-2k+\mu,n-2k+\lambda).$ A strongly regular graph is {\em primitive} if the graph and its complement are connected.

\begin{example}\label{ex:trivialsrg}
Let $G$ be a $\srg(n,k,\lambda,\mu)$.
A trivial non primitive case is given by $\mu=0$. Indeed, if $\mu=0$, then it is the disjoint union of complete graphs on $k+1$ vertices. In particular, $\lambda=k-1$. 

Another non primitive case is given by $\mu=k$. Then $G$ is a complete multipartite graph.
Necessarily, all the parts have the same size, $n-k$. Note that the complement of $G$ corresponds to the first graph.
\end{example}

The two graphs in the previous example are the only non primitive graphs.
\begin{lemma}\label{lem:prim}
Let $G$ be a strongly regular graph. $G$ is primitive if and only if $\mu\notin \{0,k\}$.
In particular, all primitive strongly regular graphs have diameter $2$.
\end{lemma}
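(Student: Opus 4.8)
The plan is to prove both directions of the equivalence and then deduce the diameter statement. First I would dispose of the ``only if'' direction by contraposition, using exactly the two cases already singled out in Example~\ref{ex:trivialsrg}: if $\mu = 0$ then $G$ is a disjoint union of cliques $K_{k+1}$, so $G$ is disconnected (or trivial), hence not primitive; if $\mu = k$ then $G$ is complete multipartite, so its complement is a disjoint union of cliques and is disconnected, hence again $G$ is not primitive. So $G$ primitive forces $\mu \notin \{0,k\}$.

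For the ``if'' direction, assume $\mu \notin \{0,k\}$; I must show both $G$ and its complement $\overline{G}$ are connected. Since $\overline{G}$ is the strongly regular graph with parameters $\srg(n,n-1-k,n-2-2k+\mu,n-2k+\lambda)$ and its $\mu$-parameter is $n-2k+\lambda$, it suffices to show: a strongly regular graph with $\mu \geq 1$ is connected — applying this once to $G$ (where $\mu \geq 1$) and once to $\overline{G}$ (where I would need to check its $\mu$-parameter is positive, which follows because $\overline{G}$ is disconnected exactly when $G$ is a complete multipartite graph, i.e.\ when $\mu = k$, the case we have excluded). The core claim is therefore: \emph{if $G$ is $\srg(n,k,\lambda,\mu)$ with $\mu \geq 1$ and $k \geq 1$, then $G$ is connected.} To see this, take any vertex $u$ and any vertex $w \neq u$. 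If $w \in N(u)$ we are done; otherwise $w$ is a non-neighbour of $u$, and since $\mu \geq 1$, $u$ and $w$ have a common neighbour, giving a path of length $2$. Hence every vertex is within distance $2$ of $u$, so $G$ is connected and moreover has diameter at most $2$.

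Finally, the diameter statement: a primitive strongly regular graph is connected with $\mu \geq 1$, and the argument just given shows every pair of distinct vertices is at distance $1$ or $2$; since a strongly regular graph that is a disjoint union of cliques is the $\mu = 0$ case (excluded by primitivity) and a complete graph would have to be disconnected in its complement, $G$ is not complete, so there exist two non-adjacent vertices and the diameter is exactly $2$.

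The only genuinely delicate point is bookkeeping the complement: one must be careful that the hypothesis $\mu \notin \{0,k\}$ translates correctly into ``$\overline{G}$ has positive $\mu$-parameter,'' i.e.\ that $\overline{G}$ disconnected forces $G$ complete multipartite, which is the $\mu = k$ case; I expect this symmetry argument to be the main thing to get right, while the connectivity-from-$\mu \geq 1$ step is immediate.
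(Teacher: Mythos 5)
Your proposal is correct and follows essentially the same strategy as the paper: the only-if direction is read off from Example~\ref{ex:trivialsrg}, and the if direction rests on the observation that a strongly regular graph with $\mu\geq 1$ has every non-adjacent pair joined by a common neighbour, hence diameter $2$ and connectedness, applied to both $G$ and $\overline{G}$. The single point of divergence is how you certify that the complement's $\mu$-parameter $n-2k+\lambda$ is nonzero: the paper does this algebraically, applying Equation~\eqref{eq:relationparam} to $\overline{G}$ to get $k(n-2k+\lambda)=(n-1-k)(k-\mu)\neq 0$ when $\mu\neq k$, whereas you argue structurally and by contraposition. The ``delicate point'' you flag does go through, and the missing chain is exactly Example~\ref{ex:trivialsrg} applied to $\overline{G}$: if $\overline{G}$ were disconnected, two vertices in distinct components would be non-adjacent with no common neighbour, so the strongly regular graph $\overline{G}$ would have $\mu$-parameter $0$, hence be a disjoint union of cliques, making $G$ complete multipartite and $\mu=k$, contrary to hypothesis. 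Either route closes the argument: yours avoids the parameter arithmetic but leans on the classification of the $\mu=0$ case for $\overline{G}$, while the paper's computation avoids any structural appeal beyond diameter~$2$; you should just state the contrapositive chain explicitly rather than asserting the equivalence ``$\overline{G}$ disconnected iff $G$ complete multipartite,'' whose nontrivial direction is precisely that chain.
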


\begin{proof}
As explained in Example \ref{ex:trivialsrg}, if $\mu\in\{0,k\}$ then $G$ is not primitive.
If $\mu \neq 0$, then two non-adjacent vertices have at least one vertex in common. Hence the diameter of $G$ is two and in particular, $G$ is connected.
Assume now that $\mu\neq k$. By Equation \eqref{eq:relationparam}, the value of $\mu$ for the complement of $G$, $n-2k+\lambda$, is not $0$. As before, it means that the complement of $G$ has diameter $2$ and is connected.
\end{proof}

We now turn to results concerning identifying codes.
Strongly regular graphs have been used by Gravier \textit{et al.\ }\cite{GJLR14} to provide families of graphs for which all the subsets of a given size are identifying codes. However, they did not study optimal identifying codes.
As mentioned in the introduction, resolving sets and metric dimension have been studied in several contexts for strongly regular graphs. In particular, Babai~\cite{B80} gave an upper bound on the size of the symmetric differences of open neighbourhood in strongly regular graphs which leads to bounds on the metric dimension. Following his ideas, we prove similar results for identifying codes.

We first compute the smallest size $d$ of the symmetric differences of closed neighbourhoods using $\lambda$ and $\mu$ and then give a general upper bound on $d$.

\begin{proposition}\label{prop:symdif}
Let $G$ be a strongly regular graph $\srg(n,k,\lambda,\mu)$.
Let $u$ and $v$ be two vertices of $G$.
If $u$ is adjacent to $v$, then $|N[u]\Delta N[v]|=2(k-1)-2\lambda$. Otherwise, $|N[u]\Delta N[v]|=2(k+1)-2\mu$.

Hence, the smallest symmetric difference of closed neighbourhoods is
$$d=\min(2(k-\lambda-1),2(k-\mu+1))=2k-2\max(\lambda+1,\mu-1).$$
If $G$ is vertex-transitive\footnote{Actually, it seems that almost all the strongly regular graphs are not vertex-transitive, see for example~\cite{C03}. However, all the strongly regular graphs we are considering in this paper are in fact vertex-transitive.},  we have
$$\IDf(G)=\frac{n}{\min(k+1,2(k-\lambda-1),2(k-\mu+1))}.$$
\end{proposition}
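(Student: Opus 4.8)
The plan is to reduce everything to a single count of $|N[u]\cap N[v]|$, since by regularity $|N[u]| = |N[v]| = k+1$, so inclusion--exclusion gives $|N[u]\Delta N[v]| = 2(k+1) - 2|N[u]\cap N[v]|$ in both cases. So the whole proof is: determine $|N[u]\cap N[v]|$ when $u\sim v$ and when $u\not\sim v$, substitute, take the minimum, and invoke Proposition~\ref{prop:idftransitive} for the fractional value.

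First I would treat the case $u$ adjacent to $v$. The vertices lying in $N(u)\cap N(v)$ are by definition the common neighbours of the adjacent pair $u,v$, and there are exactly $\lambda$ of them. On top of that, $u\in N[u]$ trivially and $u\in N[v]$ because $u\sim v$, so $u\in N[u]\cap N[v]$; symmetrically $v\in N[u]\cap N[v]$. No further vertex can be in the intersection, hence $|N[u]\cap N[v]| = \lambda+2$, which yields $|N[u]\Delta N[v]| = 2(k+1) - 2(\lambda+2) = 2(k-1) - 2\lambda$.

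Next I would treat the case $u$ not adjacent to $v$. Here the key observation is that $u\notin N(v)$ and $u\neq v$, so $u\notin N[v]$, and likewise $v\notin N[u]$; therefore $N[u]\cap N[v] = N(u)\cap N(v)$, which has size exactly $\mu$. Substituting gives $|N[u]\Delta N[v]| = 2(k+1) - 2\mu$. Taking the minimum of the two expressions gives $d = \min\bigl(2(k-\lambda-1),\, 2(k-\mu+1)\bigr)$, and since $\min(k-\lambda-1,\,k-\mu+1) = k - \max(\lambda+1,\,\mu-1)$ this is $2k - 2\max(\lambda+1,\mu-1)$. Finally, if $G$ is vertex-transitive it is in particular twin-free whenever $d>0$, so Proposition~\ref{prop:idftransitive} applies verbatim with this value of $d$ and $\IDf(G) = n/\min(k+1,d)$, which is the displayed formula.

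There is no genuine obstacle here; the computation is elementary. The one place to be careful is exactly the asymmetry between the two cases: in the adjacent case $u$ and $v$ themselves contribute to $N[u]\cap N[v]$ (adding $2$ to the count), while in the non-adjacent case they do not, and this is precisely what accounts for the shift between $2(k-1)-2\lambda$ and $2(k+1)-2\mu$. I would make that point explicit so the reader sees why the two formulas are not simply obtained from one another by replacing $\lambda$ with $\mu$.
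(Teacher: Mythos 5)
Your proof is correct and is essentially the same elementary counting argument as the paper's: the paper computes $|N[u]\setminus N[v]|=k-1-\lambda$ directly (noting $v$ is among the $k-\lambda$ neighbours of $u$ not adjacent to $v$), while you equivalently compute $|N[u]\cap N[v]|$ and apply inclusion--exclusion, with the same bookkeeping of $u,v$ themselves explaining the asymmetry between the two formulas. The final step, invoking Proposition~\ref{prop:idftransitive} for the vertex-transitive case, matches the paper as well.
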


\begin{proof}
Let $u$ and $v$ be two adjacent vertices. There are $k-\lambda$ neighbours of $u$ that are not neighbours of $v$. But $v$ is counted in these vertices. Hence $|N[u]\setminus N[v]|=k-1-\lambda$ and we obtain the results. The computation for the non-adjacent case is similar.
\end{proof}

\begin{proposition}\label{prop:bounddsrg}
Let $G$ be a primitive strongly regular graph $\srg(n,k,\lambda,\mu)$ on $n$ vertices, then $k\geq \sqrt{n-1}$ and the smallest symmetric difference satisfies $d> \sqrt{n}-3$.
\end{proposition}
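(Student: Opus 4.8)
The plan is to bound the two quantities separately, each via an elementary counting/parameter argument applied to the defining relation \eqref{eq:relationparam}, namely $(n-k-1)\mu = k(k-\lambda-1)$. For the bound $k \geq \sqrt{n-1}$, first observe that since $G$ is primitive, $\mu \neq 0$, so every vertex outside $N[u]$ has at least one neighbour in $N(u)$; more crudely, a vertex $u$ together with its $k$ neighbours, and the at most $k$ further neighbours of each of those (discounting overlaps), must cover all of $V$ when the diameter is $2$ (Lemma~\ref{lem:prim}). Counting loosely, $n \leq 1 + k + k(k-1) = k^2+1$, which already gives $k \geq \sqrt{n-1}$. Equivalently and more cleanly, from $(n-k-1)\mu = k(k-\lambda-1) \leq k(k-1) < k^2$ together with $\mu \geq 1$ one gets $n - k - 1 < k^2$, hence $n - 1 < k^2 + k$; I would pick whichever of these elementary inequalities yields exactly $k \geq \sqrt{n-1}$ with the least fuss (the diameter-$2$ covering bound $n \le k^2+1$ does it directly).

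For the symmetric-difference bound, recall from Proposition~\ref{prop:symdif} that $d = 2k - 2\max(\lambda+1, \mu-1)$. The two cases to control are the adjacent case, where the relevant term is $2(k-\lambda-1)$, and the non-adjacent case, $2(k-\mu+1)$. For the non-adjacent case I would use the complementary strongly regular graph: its parameter playing the role of $k$ is $n-1-k$, and applying the already-established inequality $k' \geq \sqrt{n-1}$ to the complement gives $n-1-k \geq \sqrt{n-1}$, i.e. $k \leq n - 1 - \sqrt{n-1}$, and also $n-1-k \le (n-1-k)^2 + \dots$; more to the point, in the complement the non-adjacent symmetric difference becomes the adjacent one, so it suffices to bound $2(k-\lambda-1)$ from below in $G$ and invoke the same statement for $\bar G$. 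So the crux reduces to showing $2(k-\lambda-1) > \sqrt{n}-3$. Here I would again use \eqref{eq:relationparam}: $k(k-\lambda-1) = (n-k-1)\mu \geq n-k-1$ (since $\mu \geq 1$), so $k - \lambda - 1 \geq (n-k-1)/k$. Combined with $k \geq \sqrt{n-1}$, one estimates $(n-k-1)/k \geq (n-1)/k - 1 \geq \sqrt{n-1} - 1$ when $k \le n-1$... but one must be careful: $(n-k-1)/k$ is decreasing in $k$, so the bound $k \geq \sqrt{n-1}$ alone does not immediately help — I also need an upper bound on $k$, which is exactly what the complement inequality $k \leq n-1-\sqrt{n-1}$ provides. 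Plugging $k \le n-1-\sqrt{n-1}$ into $k-\lambda-1 \ge (n-k-1)/k$ gives $k-\lambda-1 \ge \sqrt{n-1}/(n-1-\sqrt{n-1})$, which is too weak. Instead the right move is: $2(k-\lambda-1) \ge 2(n-k-1)/k$, and since $k \le n-1-\sqrt{n-1}$ we have $n-k-1 \ge \sqrt{n-1}$, while $k < n$, so $2(n-k-1)/k$ is at least $2\sqrt{n-1}/k$; this still needs an upper bound on $k$ to conclude, and chasing it through with $k \le n-1$ yields roughly $2\sqrt{n-1}/(n-1)$, useless. The clean route is therefore to treat the adjacent and non-adjacent cases by symmetry and argue that at least one of $\lambda, \mu$ (for $G$ or $\bar G$) is small: since $G$ is primitive we have $\mu \ge 1$ and $\mu \le k-1$; the minimum $d = 2k - 2\max(\lambda+1,\mu-1)$ is governed by $\max(\lambda+1,\mu-1)$, and I would show $\max(\lambda+1,\mu-1) \le k - \tfrac12(\sqrt n - 3)$ by deriving from \eqref{eq:relationparam} a quadratic inequality in the eigenvalue-type quantity $r - s$ (where $r,s$ are the restricted eigenvalues, $r-s = \sqrt{(\lambda-\mu)^2 + 4(k-\mu)}$), using that $k - \lambda - 1 = \mu + (r-s)(\dots)$ and the standard identities $\lambda - \mu = r+s$, $k-\mu = -rs$. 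The minimal symmetric difference $d = 2k - 2\max(\lambda+1,\mu-1) = 2\min(k-\lambda-1, k-\mu+1)$, and both $k-\lambda-1 \ge \mu \ge 1$ and $k - \mu + 1 \ge k - (k-1) + 1 = 2$ trivially; the real content is that $d$ grows like $\sqrt n$, which follows because $n \le k^2 + 1 \le$ (expression in $\lambda,\mu,d$) forces $d$ large.

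The step I expect to be the main obstacle is precisely this last one: getting the \emph{lower} bound $d > \sqrt n - 3$ rather than just $d \ge 2$. The issue is that $d = 2(k-\lambda-1)$ can a priori be small while $k$ is large (e.g. $\lambda$ close to $k$), so I cannot decouple $d$ from $n$ using only monotonicity in $k$. The fix I would pursue is to bound $n$ in terms of $k$ and $d$ directly: in the adjacent case, fixing a vertex $u$ and a neighbour $v$, the set $N[u] \cup N[v]$ has size $2(k+1) - (\lambda + 2) = 2k - \lambda = k + (k - \lambda) = k + 1 + \tfrac{d}{2}$ (adjacent case), and every other vertex must be adjacent to $N(u) \cup N(v)$ within distance $1$... more usefully, I would count pairs (vertex $w$, common neighbour) to get $n \le 1 + k + \binom{k}{2}\cdot\frac{?}{\mu}$-type estimates, ultimately an inequality of the shape $n \le c\cdot k \cdot d$ or $n \le k^2 + 1$ with $k \le \lambda + 1 + d/2$ feeding back in. Concretely: $n \le k^2 + 1$ and $k - \lambda - 1 = d/2$ in the adjacent case, combined with the trivial $\lambda \le k - 2$ giving $k = \lambda + 1 + d/2 \le (k-2) + 1 + d/2$, i.e. $d \ge 2$ again — not enough. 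The genuinely correct argument (which I would reconstruct following Babai's method referenced just before the proposition) bounds the symmetric difference of \emph{open} neighbourhoods below by counting, for two vertices with small symmetric difference, how many vertices can "look alike", and concludes via a Fisher-type / double-counting inequality $\sum_u |N[u]| + (\text{something}) \ge $ (number of pairs), forcing $d = \Omega(\sqrt n)$. So in the write-up I would: (1) prove $n \le k^2 + 1$ to get $k \ge \sqrt{n-1}$; (2) apply the same to $\bar G$; (3) for $d$, split into the two cases of Proposition~\ref{prop:symdif} and in each case bound $n$ from above by a product roughly $\tfrac12 k(d+2)$ via double counting over common-neighbour pairs, then solve the resulting inequality for $d$ using $k \le n-1$ to obtain $d > \sqrt n - 3$ after a short estimate.
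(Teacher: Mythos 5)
Your first half is fine and is exactly the paper's argument: primitivity gives diameter $2$ (Lemma~\ref{lem:prim}), hence $n\leq 1+k+k(k-1)=k^2+1$, i.e.\ $k\geq\sqrt{n-1}$. The problem is the second half, which is the real content of the proposition, and there you never actually produce a proof. You correctly identify the obstacle (elementary manipulations of \eqref{eq:relationparam} only give $d\geq 2$, and $(n-k-1)/k$ cannot be bounded below usefully without an upper bound on $k$), but your proposed repairs do not close it. The final plan --- derive by double counting an inequality of the shape $n\lesssim\tfrac12 k(d+2)$ and then ``solve for $d$ using $k\leq n-1$'' --- is quantitatively hopeless: substituting $k\leq n-1$ into $n\leq\tfrac12 k(d+2)$ yields only $d>0$, and no inequality of the form $n\leq c\,k\,d$ can force $d=\Omega(\sqrt n)$ unless one also has $k=O(\sqrt n)$, which is false for primitive strongly regular graphs in general (conference graphs such as Paley graphs have $k\approx n/2$). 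Likewise the complement trick only exchanges the adjacent and non-adjacent cases (with a $\pm2$ shift); it does not prove either of them. The sentence in which you say you would ``reconstruct Babai's method'' is a placeholder, not an argument: the bound $\max(\lambda+1,\mu-1)\leq k-\tfrac12(\sqrt n-3)$ is precisely the statement to be proved, and neither the eigenvalue identities you list nor the counting sketches are carried far enough to yield it.

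For comparison, the paper does not reprove this estimate at all: it quotes Babai's theorem~\cite{B80} that in a primitive strongly regular graph every pair of distinct vertices satisfies $|N(u)\Delta N(v)|>\sqrt{n}-1$, and then converts open to closed neighbourhoods, using $|N[u]\Delta N[v]|=|N(u)\Delta N(v)|-2$ for adjacent pairs and $|N(u)\Delta N(v)|+2$ for non-adjacent pairs, which immediately gives $d>\sqrt n-3$. So either cite that result explicitly (as the paper does), or genuinely reconstruct its proof; as written, your argument establishes $k\geq\sqrt{n-1}$ but not $d>\sqrt n-3$.
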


\begin{proof}
Since $G$ is primitive, by Lemma \ref{lem:prim}, it has diameter $2$. Thus there are at most $1+k+k(k-1)$ vertices in $G$. Hence $n\leq 1+k^2$ and we obtain the upper bound on $k$.

To prove the second inequality, we use a result of Babai~\cite{B80}: for every pair of vertices $u,v$ of a primitive strongly regular graph $|N(u)\Delta N(v)|>\sqrt{n}-1$. If $u$ and $v$ are adjacent, $|N[u]\Delta N[v]|=|N(u)\Delta N(v)|-2$ whereas if $u$ and $v$ are non adjacent, $|N[u]\Delta N[v]|=|N(u)\Delta N(v)|+2$. Hence $d>\sqrt{n}-3$.
\end{proof}

Using these bounds together with Proposition \ref{prop:idftransitive}, we obtain the following general bound for strongly regular graphs when they are vertex-transitive.

\begin{corollary}
Let $G$ be a primitive strongly regular graph $\srg(n,k,\lambda,\mu)$. If $G$ is vertex-transitive, we have
$$\ID(G) \leq \frac{n(1+2\ln{n})}{\sqrt{n}-3}.$$
In particular $\ID(G)=O(\sqrt{n}\ln{n})$.
\end{corollary}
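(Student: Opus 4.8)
The plan is to combine the three ingredients already assembled in the excerpt. From Proposition~\ref{prop:idftransitive}, for a twin-free vertex-transitive graph $G$ we have $\ID(G)\le \frac{|V|(1+2\ln|V|)}{\min(k+1,d)}$, where $k$ is the degree and $d$ the smallest size of a symmetric difference of closed neighbourhoods. So it suffices to bound $\min(k+1,d)$ from below by $\sqrt{n}-3$. Proposition~\ref{prop:bounddsrg} does exactly this: for a primitive $\srg(n,k,\lambda,\mu)$ we have $k\ge\sqrt{n-1}$, hence $k+1\ge\sqrt{n-1}+1>\sqrt{n}-3$ (indeed $k+1>\sqrt n$ already), and $d>\sqrt{n}-3$. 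Therefore $\min(k+1,d)>\sqrt{n}-3$.

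First I would check the hypotheses needed to apply the earlier results. Proposition~\ref{prop:idftransitive} requires $G$ to be twin-free; a primitive strongly regular graph has no twins, since two vertices with $N[u]=N[v]$ would have to be adjacent (as $N[u]$ contains $u$) with $|N[u]\cap N[v]|=\lambda$ forcing $\lambda=k-1$, i.e. $\mu=0$ by Equation~\eqref{eq:relationparam}, contradicting primitivity via Lemma~\ref{lem:prim}; alternatively just note $d>\sqrt n-3>0$ for $n$ large, and handle tiny cases separately. One should also note that $G$ is assumed vertex-transitive in the statement, so Proposition~\ref{prop:idftransitive} genuinely applies. Then I would simply chain the inequalities:
\[
\ID(G)\ \le\ \frac{n(1+2\ln n)}{\min(k+1,d)}\ <\ \frac{n(1+2\ln n)}{\sqrt{n}-3}.
\]
The asymptotic statement $\ID(G)=O(\sqrt n\ln n)$ follows since $\frac{n}{\sqrt n-3}=\sqrt n\cdot\frac{1}{1-3/\sqrt n}=O(\sqrt n)$ and the factor $1+2\ln n=O(\ln n)$.

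There is essentially no obstacle here — this is a routine corollary, and the only mild care needed is making sure the denominator $\sqrt n-3$ is positive, i.e.\ that $n>9$; for $n\le 9$ there are no primitive strongly regular graphs of interest (the smallest is the $5$-cycle, $\srg(5,2,0,1)$, with $n=5$), so one can either restrict to $n$ large or absorb the finitely many small cases into the $O$-constant. If one wanted the displayed inequality to hold for all primitive strongly regular graphs uniformly, a safer route would be to first observe $n\ge 5$ always and that the only primitive strongly regular graph with $n<10$ is $\mathcal C_5$, for which $\ID(\mathcal C_5)=3$ can be checked by hand against the claimed bound; but since the paper only asserts the bound and then the $O$-estimate, stating the inequality for $n>9$ and the asymptotics in general is cleanest.
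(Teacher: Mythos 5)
Your proof is correct and follows exactly the route the paper intends: apply Proposition~\ref{prop:idftransitive} and lower-bound $\min(k+1,d)$ by $\sqrt{n}-3$ via Proposition~\ref{prop:bounddsrg} (the paper states the corollary without further proof, as it is this immediate combination). Your extra care about twin-freeness and the sign of $\sqrt{n}-3$ is sensible but does not change the argument.
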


\subsection{Known results on particular families}

The only strongly regular graphs for which we know optimal identifying codes are Cartesian and direct products of two cliques of the same size that we already mentioned in the previous section. The Cartesian product $K_p\square K_p$ is a strongly regular graph $\srg(p^2,2p-2,p-2,2)$ whereas $K_p\times K_p$ (that is the complement of $K_p\square K_p$) is a $\srg(p^2,(p-1)^2,(p-2)^2,(p-2)(p-1))$. We obtain results for some other families by considering previous work on metric dimension.

 \bigskip\noindent \textbf{Kneser and Johnson graphs (of diameter 2).}
Let $1\leq p \leq m$. The {\em Johnson graph $J(m,p)$} is the graph whose vertices are the subsets of size $p$ of a set of $m$ elements and two vertices are adjacent if the corresponding sets intersect in exactly $p-1$ elements.
Since the diameter of $J(m,p)$ is $\min(p,m-p)$, the graph $J(m,p)$ is a primitive strongly regular graph if and only if $p=2$ or $p=m-2$. Note that the two corresponding graphs are isomorphic and have parameters $\srg({m \choose 2}, 2(m-2),m-2,4)$.

The {\em Kneser graph $K(m,p)$} is the graph whose vertices are the subsets of size $p$ of a set of $m$ elements and two vertices are adjacent if the corresponding sets do not intersect. The Kneser graph $K(5,2)$ corresponds to the well known Petersen graph. The graph $K(m,p)$ is a primitive strongly regular graph if and only if $p=2$. The graph $K(m,2)$ is a $\srg({m\choose 2},{m-2\choose 2},{m-4 \choose 2},{m-3 \choose 2})$.

Bailey and Cameron~\cite{BC11} have computed the exact value of the metric dimension in $J(m,2)$ and $K(m,2)$.

\begin{proposition}[{\cite[Corollary 3.33]{BC11}}]
For $m\geq 6$, the metric dimension of the Johnson graph $J(m,2)$ and the Kneser graph $K(m,2)$ is $\frac{2}{3}(m-i)+i$ where $m\equiv i\pmod{3}$.
\end{proposition}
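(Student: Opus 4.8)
The plan is to reduce both statements to a single problem about spanning subgraphs of $K_m$. First, since two distinct $2$-subsets of an $m$-set meet in $0$ or $1$ points, $K(m,2)$ is exactly the complement of $J(m,2)$; and for $m\ge 6$ (in fact $m\ge 5$) both graphs are primitive strongly regular, hence of diameter $2$ by Lemma~\ref{lem:prim}. When $G$ and its complement $\overline G$ both have diameter $2$, the distance in $\overline G$ is obtained from that in $G$ by the bijection $0\mapsto 0$, $1\mapsto 2$, $2\mapsto 1$ of $\{0,1,2\}$, so a vertex set separates a given pair in $G$ if and only if it does so in $\overline G$; hence $\beta(J(m,2))=\beta(K(m,2))$ and it suffices to treat $J(m,2)$. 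I would then identify the vertices of $J(m,2)$ with the edges of $K_m$: as the graph has diameter $2$, a landmark $\{p,q\}$ separates $\{i,j\}$ from $\{i',j'\}$ exactly when $|\{i,j\}\cap\{p,q\}|\neq|\{i',j'\}\cap\{p,q\}|$, so a resolving set of $J(m,2)$ is the same as a spanning subgraph $H$ of $K_m$ (with edge set the chosen landmarks) in which every non-edge $e$ is determined by its \emph{signature}, the set of edges of $H$ meeting $e$ in exactly one vertex (edges of $H$ being automatically separated from everything). A case analysis of when two non-edges share a signature shows that a resolving $H$ must contain none of: two isolated vertices; a $K_2$-component; a $P_4$-component; a $P_3$-component together with an isolated vertex elsewhere.

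For the lower bound, take a resolving $H$; by the above it has at most one isolated vertex and no $K_2$- or $P_4$-component. If $H$ has no isolated vertex, every component is connected on $\ge 3$ vertices and is not $K_2$, hence has at least $\tfrac{2}{3}$ as many edges as vertices (a tree on $v\ge 3$ vertices has $v-1\ge\tfrac{2}{3} v$ edges, a graph with a cycle has $\ge v$ edges), so $|E(H)|\ge\tfrac{2}{3} m$ and therefore $|E(H)|\ge\lceil\tfrac{2}{3} m\rceil=\tfrac{2}{3}(m-i)+i$. If $H$ has exactly one isolated vertex, then no component is $P_3$, and together with the exclusion of $K_2$ and $P_4$ every component on the other $m-1$ vertices has at least $\tfrac{3}{4}$ as many edges as vertices (the star $K_{1,3}$ being extremal), so $|E(H)|\ge\tfrac{3}{4}(m-1)\ge\lceil\tfrac{2}{3} m\rceil$, the last step holding for $m\ge 9$ and being verified by hand for $m\in\{6,7,8\}$.

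For the upper bound I would give explicit $H$ of size $\tfrac{2}{3}(m-i)+i$: a disjoint union of $P_3$'s if $m\equiv 0\pmod 3$; the same with one $P_3$ replaced by a star $K_{1,3}$ if $m\equiv 1$; and a star $K_{1,4}$ (or two stars $K_{1,3}$) together with $P_3$'s on the remaining vertices if $m\equiv 2$. In each case the signature map is injective because the signature of a non-edge $\{x,y\}$ records, inside the component(s) containing $x$ and $y$, which vertex is $x$ and which is $y$ — in a $P_3$ the midpoint and the two endpoints have pairwise distinct incidence patterns, and in a star the centre is distinguished from the leaves — and since these partial patterns lie in different components for different components, they reconstruct $\{x,y\}$.

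The hard part is the case analysis behind the dictionary: pinning down exactly which small subconfigurations of $H$ destroy injectivity of the signature map, and carefully handling the asymmetry introduced by a single isolated vertex. One should also record that the bound $m\ge 6$ is necessary, since for $m=5$ the Kneser graph $K(5,2)$ is the Petersen graph and $\beta(J(5,2))=\beta(K(5,2))=3$, not $4$.
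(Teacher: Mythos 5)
The paper offers no proof of this proposition at all: it is quoted verbatim from Bailey and Cameron \cite{BC11}, so your self-contained argument is necessarily a different route, and in substance it reconstructs the Bailey--Cameron combinatorics. Your reduction is sound: $K(m,2)$ is the complement of $J(m,2)$, both are primitive strongly regular (hence of diameter $2$) for $m\ge 5$, so a set resolves one iff it resolves the other; identifying vertices with edges of $K_m$, a resolving set is a spanning subgraph $H$ whose non-edges have pairwise distinct signatures, where the signature of a non-edge $\{x,y\}$ is exactly $E_H(x)\cup E_H(y)$ (no edge of $H$ can meet a non-edge in two vertices). Your four forbidden configurations are genuinely non-resolving, and only three of them (two isolated vertices, a $K_2$-component, a $P_3$-component together with an isolated vertex) are needed for the count, which gives $|E(H)|\ge\frac{2}{3}m$ or $|E(H)|\ge\frac{3}{4}(m-1)$ and hence the lower bound $\lceil\frac{2}{3}m\rceil=\frac{2}{3}(m-i)+i$; the explicit unions of $P_3$'s and stars achieve it. Two small repairs are needed. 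First, the chain $\frac{3}{4}(m-1)\ge\lceil\frac{2}{3}m\rceil$ is literally false for many $m\ge 9$ (e.g.\ $m=10,11$); you must invoke integrality, $|E(H)|\ge\lceil\frac{3}{4}(m-1)\rceil$, and then $\frac{3}{4}(m-1)\ge\frac{2}{3}m$ for $m\ge 9$ together with the hand checks for $m\in\{6,7,8\}$ gives the bound. Second, in the upper-bound verification the claim that the signature ``records which vertex is $x$ and which is $y$'' glosses over the one genuine coincidence: in a path $a\,b\,c$ the pair $\{a,c\}$ has the same trace as the single vertex $b$; this is harmless only because your constructions have no isolated vertex, so any pair containing $b$ also picks up edges of a second component --- state this explicitly. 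With those two lines added the proof is complete, and your closing remark correctly locates the threshold: for $m=5$ the graph $K_{1,3}$ plus an isolated vertex is resolving with $3$ edges, matching $\beta=3$ for the Petersen graph, so $m\ge 6$ is indeed needed.
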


Using Corollary~\ref{cor:MDtoIC} we obtain a bound for identifying codes.

\begin{corollary}
Let $G$ be $K(m,2)$ or $J(m,2)$. We have
 $$\frac{2m}{3}\leq \ID(G) \leq \frac{4(m+1)}{3}.$$
In particular, $\ID(G)=\Theta(\sqrt{|V|})$.
\end{corollary}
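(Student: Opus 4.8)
The statement is essentially a corollary of the preceding proposition (the Bailey--Cameron value of $\beta(G)$) together with Corollary~\ref{cor:MDtoIC}; the plan is to check that the hypotheses of that corollary are met and then to do the arithmetic. First I would record that, for $m\ge 6$, both $K(m,2)$ and $J(m,2)$ are \emph{primitive} strongly regular graphs — with the parameters displayed above, $\srg(\binom m2,2(m-2),m-2,4)$ and $\srg(\binom m2,\binom{m-2}2,\binom{m-4}2,\binom{m-3}2)$ — since in each case $\mu\notin\{0,k\}$. Hence, by Lemma~\ref{lem:prim}, both graphs have diameter $2$; and they are twin-free, because a pair of twins would have $N[u]\Delta N[v]=\emptyset$, forcing $\mu\in\{0,k+1\}$ by Proposition~\ref{prop:symdif}, contradicting primitivity (equivalently, one invokes $d>\sqrt n-3>0$ from Proposition~\ref{prop:bounddsrg}). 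Thus Corollary~\ref{cor:MDtoIC} applies and gives $\beta(G)\le \ID(G)\le 2\beta(G)+2$.

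Next I would substitute the Bailey--Cameron formula. Writing $m\equiv i\pmod 3$ with $i\in\{0,1,2\}$, the cited proposition gives $\beta(G)=\tfrac23(m-i)+i=\tfrac{2m+i}{3}$, so $\tfrac{2m}{3}\le\beta(G)\le\tfrac{2(m+1)}{3}$. The lower bound of Corollary~\ref{cor:MDtoIC} immediately yields $\ID(G)\ge\beta(G)\ge\tfrac{2m}{3}$. For the upper bound, Corollary~\ref{cor:MDtoIC} directly gives $\ID(G)\le 2\beta(G)+2\le\tfrac{4(m+1)}{3}+2$; to reach the sharper constant $\tfrac{4(m+1)}{3}$ stated, I would argue that a minimum-size resolving set of $G$ can be taken to be dominating — so that $\LD(G)=\beta(G)$ and hence $\ID(G)\le 2\LD(G)=2\beta(G)\le\tfrac{4(m+1)}{3}$ by Proposition~\ref{prop:IDLD} — either by inspecting the explicit resolving sets of Bailey and Cameron for the triangular and Kneser graphs, or by an \emph{ad hoc} construction; for the asymptotic conclusion the weaker estimate already suffices. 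Finally, since $|V|=\binom m2=\tfrac{m(m-1)}2=\Theta(m^2)$, both bounds are $\Theta(m)=\Theta(\sqrt{|V|})$, which proves the last assertion.

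The only mildly delicate point is this last refinement of the upper constant, namely verifying that an optimal resolving set already dominates (so that one loses nothing in Proposition~\ref{prop:LDRS}); everything else — checking primitivity, reading off diameter $2$ and twin-freeness, and the bookkeeping with $i=m\bmod 3$ and $|V|=\binom m2$ — is routine.
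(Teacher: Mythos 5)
Your proposal takes essentially the same route as the paper, which obtains the corollary simply by substituting the Bailey--Cameron value of $\beta(G)$ into Corollary~\ref{cor:MDtoIC}; your checks of primitivity, diameter $2$ and twin-freeness, and the bookkeeping with $i=m\bmod 3$ and $|V|=\binom{m}{2}$ are exactly what is implicit there. Your caveat about the constant is apt: the direct bound $\ID(G)\leq 2\beta(G)+2$ only yields $\frac{4(m+1)}{3}+2$, so the stated constant $\frac{4(m+1)}{3}$ would indeed need a refinement of the kind you sketch (a point the paper glosses over), but this additive discrepancy does not affect the conclusion $\ID(G)=\Theta(\sqrt{|V|})$.
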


To compute the fractional identifying code number, one just has to compute the value of the smallest symmetric difference using Proposition~\ref{prop:symdif}. For $K(m,2)$ and $m\geq6$, $\mu-1\geq \lambda +1$ and $2k-2\mu+2=2(m-1)\leq k+1$. Hence, for $m\geq 6$,  $$\IDf(K(m,2))=\frac{m(m-1)}{4(m-1)}=\frac{m}{4}.$$

For $J(m,2)$, $\lambda+1\geq \mu-1$ whenever $m\geq 4$ and $2k-2\lambda-2=2(m-3)=k$.
Hence $$\IDf(J(m,2))=\frac{m(m-1)}{4(m-3)}=\frac{m}{4}+2.$$
In all cases, we have $\IDf(G)=\Theta(\sqrt{|V|})$ and the fractional and integer values have the same order for these graphs.

 \bigskip\noindent \textbf{Paley graphs.}
The Paley graph $P_q$ is defined for a prime power $q\equiv 1\pmod 4$. Vertices are the elements of the finite field $\mathbb{F}_q$ on $q$ elements, and $a$ is adjacent to $b$ if $a-b$ is a square. They are strongly regular $\srg \left (q, \tfrac{1}{2}(q-1),\tfrac{1}{4}(q-5),\tfrac{1}{4}(q-1) \right )$.
Paley graphs have the particularity to have symmetric difference of closed neighbourhoods of order $|V|$, hence the fractional identifying code number is bounded by a constant and the identifying code number is of order $\log_2{|V|}$.

\begin{proposition}
Let $q$ be a prime power satisfying $q \equiv 1 \pmod 4$ and $q\geq 9$. We have $\IDf(P_q)=\frac{2q}{q-1}$ and thus
$$\log_2 (q+1) \leq \ID(P_q) \leq (2+o(1))(1+2\ln{q}).$$
In particular, $\ID(P_q)=\Theta(\log_2{|V|})$.
\end{proposition}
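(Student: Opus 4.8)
The plan is to compute $\IDf(P_q)$ by plugging the standard Paley parameters into Proposition~\ref{prop:symdif} to obtain the smallest symmetric difference $d$, then into Proposition~\ref{prop:idftransitive}, and finally to derive the stated bounds on $\ID(P_q)$ from the general lower bound of Proposition~\ref{prop:galbound} together with the logarithmic fractional--integral gap of Proposition~\ref{prop:idftransitive}. Before anything else I would note that $P_q$ is a twin-free vertex-transitive graph, so that Proposition~\ref{prop:idftransitive} is applicable: it is the Cayley graph of the additive group of $\mathbb{F}_q$ with connection set the nonzero squares, and this set is symmetric precisely because $-1$ is a square when $q\equiv 1\pmod 4$; moreover for $q\ge 9$ one has $\mu=\tfrac14(q-1)\notin\{0,k\}$, so by Lemma~\ref{lem:prim} the graph is primitive of diameter $2$, hence twin-free, with at least three vertices and at least one edge.

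Next comes the short parameter computation. With $k=\tfrac12(q-1)$, $\lambda=\tfrac14(q-5)$ and $\mu=\tfrac14(q-1)$ we get $\lambda+1=\tfrac{q-1}{4}$ and $\mu-1=\tfrac{q-5}{4}$, so $\max(\lambda+1,\mu-1)=\tfrac{q-1}{4}$, attained by pairs of adjacent vertices. Proposition~\ref{prop:symdif} then gives
$$d=2k-2\max(\lambda+1,\mu-1)=(q-1)-\frac{q-1}{2}=\frac{q-1}{2},$$
while $k+1=\tfrac{q+1}{2}>d$, so $\min(k+1,d)=\tfrac{q-1}{2}$. Proposition~\ref{prop:idftransitive} (with $|V|=q$) therefore yields $\IDf(P_q)=\dfrac{q}{(q-1)/2}=\dfrac{2q}{q-1}$.

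Finally, the lower bound $\log_2(q+1)\le\ID(P_q)$ is exactly Proposition~\ref{prop:galbound} applied to the twin-free graph $P_q$ on $q$ vertices, and the upper bound of Proposition~\ref{prop:idftransitive} gives $\ID(P_q)\le\IDf(P_q)\bigl(1+2\ln q\bigr)=\dfrac{2q}{q-1}(1+2\ln q)$; since $\dfrac{2q}{q-1}=2+\dfrac{2}{q-1}=2+o(1)$ as $q\to\infty$, this is $(2+o(1))(1+2\ln q)$. Combining the two bounds gives $\ID(P_q)=\Theta(\log_2 q)=\Theta(\log_2|V|)$. There is essentially no real obstacle here: the statement reduces entirely to the two displayed computations plus citing earlier results, and the only point deserving a sentence of justification is that $P_q$ is genuinely vertex-transitive (the Cayley-graph remark above), which is needed so that Proposition~\ref{prop:idftransitive} rather than merely Proposition~\ref{prop:fracbound} can be invoked.
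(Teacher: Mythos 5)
Your proposal is correct and follows essentially the same route as the paper: compute $d=\tfrac{q-1}{2}<k+1$ via Proposition~\ref{prop:symdif}, apply Proposition~\ref{prop:idftransitive} to get $\IDf(P_q)=\tfrac{2q}{q-1}$, and then use Proposition~\ref{prop:galbound} for the lower bound and the fractional--integral gap (Proposition~\ref{prop:fracbound}, equivalently the second display of Proposition~\ref{prop:idftransitive}) for the upper bound. Your extra remarks verifying that $P_q$ is vertex-transitive (as a Cayley graph, using $q\equiv 1\pmod 4$) and twin-free are a harmless and welcome addition that the paper leaves implicit.
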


\begin{proof}
We first compute the value of $d$. We have
$$\max(\lambda+1,\mu-1)=\tfrac{q-1}{4}.$$
Thus $d=\frac{q-1}{2}<k+1=\frac{q+1}{2}$ and  $\IDf(P_q)=\frac{2q}{q-1}\leq 2+o(1)$.

The lower bound on $\ID(P_q)$ is the general lower bound of Proposition \ref{prop:galbound}.
For the upper bound, we use the bound of Proposition \ref{prop:fracbound} with $\IDf(P_q)$.
\end{proof}

Similar results were obtained for metric dimension.

\begin{proposition}[Fijav\v{z} and Mohar~\cite{FM04}]
Let $q$ be a prime power satisfying $q \equiv 1 \pmod 4$. Then the metric dimension of the Paley graph $P_q$ satisfies
$$\log_2{q} \leq \beta(P_q) \leq 2\log_2{q}.$$
In particular, $\beta(P_q)=\Theta(\log_2{|V|})$.
\end{proposition}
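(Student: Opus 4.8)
The plan is to prove the two inequalities separately: the lower bound $\beta(P_q)\ge\log_2 q$ by a short counting argument that uses only that $P_q$ has diameter $2$, and the upper bound $\beta(P_q)\le 2\log_2 q$ by the probabilistic method, exploiting that in a Paley graph every two distinct vertices have a symmetric difference of neighbourhoods of size $\tfrac12 q+O(1)$.

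For the lower bound, first observe that $\mu=\tfrac14(q-1)\notin\{0,k\}$ for $q\ge5$, so $P_q$ is a primitive strongly regular graph and hence has diameter $2$ by Lemma~\ref{lem:prim}. Let $S$ be a resolving set and put $s=|S|$. For $u\notin S$ and $x\in S$ we have $d(u,x)=1$ if $x\in N(u)$ and $d(u,x)=2$ otherwise, so the distance vector of $u$ with respect to $S$ is determined by the trace $N(u)\cap S\subseteq S$; two vertices of $V\setminus S$ with equal traces would not be resolved, so these traces are pairwise distinct and $q-s\le 2^{s}$. Since $s=O(\log q)$ (by the upper bound below), this gives $\beta(P_q)\ge\log_2(q-\beta(P_q))=\log_2 q-o(1)$, which is the asserted bound up to a lower-order term. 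This is the same idea as the lower bound of Proposition~\ref{prop:galbound}.

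For the upper bound, Proposition~\ref{prop:symdif}, applied with $\lambda=\tfrac14(q-5)$ and $\mu=\tfrac14(q-1)$, gives $|N(u)\Delta N(v)|=\tfrac12(q-1)$ for adjacent $u,v$ and $|N(u)\Delta N(v)|=\tfrac12(q+3)$ for non-adjacent ones. A vertex $x$ resolves a pair $\{u,v\}$ exactly when $x\in R_{uv}:=\{u,v\}\cup(N(u)\Delta N(v))$; here $|R_{uv}|=\tfrac12(q-1)$ if $u\sim v$ and $|R_{uv}|=\tfrac12(q+7)$ otherwise. Choose $S$ uniformly at random among the subsets of $V$ of size $t=\lfloor 2\log_2 q\rfloor$. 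For a fixed pair, $\Pr[S\cap R_{uv}=\emptyset]\le(1-|R_{uv}|/q)^{t}$, which is $\bigl(\tfrac{q+1}{2q}\bigr)^{t}$ for adjacent pairs and at most $\bigl(\tfrac{q-7}{2q}\bigr)^{t}$ for non-adjacent ones. There are exactly $\tfrac14 q(q-1)$ pairs of each kind, so the expected number of unresolved pairs is at most $\tfrac14 q(q-1)\,2^{-t}\bigl[(1+1/q)^{t}+(1-7/q)^{t}\bigr]$. Since $(1+1/q)^{t}+(1-7/q)^{t}=2-6t/q+O(t^{2}/q^{2})$ and $2^{-t}\le 2q^{-2}$, this quantity equals $1-(3t+1)/q+o(t/q)<1$ for $q$ large; hence some $t$-set is a resolving set and $\beta(P_q)\le 2\log_2 q$.

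The only real obstacle is quantitative: extracting the clean constants $1$ and $2$ in front of $\log_2 q$ rather than merely $\beta(P_q)=\Theta(\log_2|V|)$. In the lower bound one must swallow the harmless $-\beta(P_q)$ correction; in the upper bound one must balance the slightly unfavourable adjacent pairs (symmetric difference $\tfrac12(q-1)<\tfrac12 q$) against the favourable non-adjacent ones, which is exactly where the two-sided split above is needed, and one must take $q$ large enough — small Paley graphs such as $P_5=\mathcal C_5$, where $\beta=2<\log_2 5$, are genuine exceptions. Everything else is routine: the diameter-$2$ structure, the evaluation of the symmetric differences via Proposition~\ref{prop:symdif}, and the union bound. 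A weaker upper bound, $\beta(P_q)\le(4+o(1))\ln q$, also follows at once from the identifying-code estimate of the previous proposition together with Corollary~\ref{cor:MDtoIC}.
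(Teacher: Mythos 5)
Your proposal cannot be compared with an internal argument, because the paper does not prove this proposition at all: it is quoted from Fijav\v{z} and Mohar~\cite{FM04} and used as a benchmark, so your reconstruction has to stand on its own. Judged that way, the upper-bound half has the right shape (random $t$-set, $t=\lfloor 2\log_2 q\rfloor$, union bound over pairs), but the numerical input is off: Proposition~\ref{prop:symdif} is about \emph{closed} neighbourhoods, and for open neighbourhoods in $P_q$ the values are swapped — $|N(u)\Delta N(v)|=\tfrac12(q+3)$ for adjacent and $\tfrac12(q-1)$ for non-adjacent pairs. Consequently your $|R_{uv}|=\tfrac12(q+7)$ for non-adjacent pairs overstates the true size, so the failure-probability estimate $\bigl(\tfrac{q-7}{2q}\bigr)^{t}$ is not a valid upper bound. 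The slip is repairable, and in fact the repair simplifies everything: $R_{uv}=\{u,v\}\cup(N(u)\Delta N(v))$ has size exactly $\tfrac12(q+3)$ in \emph{both} cases, so the expected number of unresolved pairs is at most $\binom{q}{2}\bigl(\tfrac{q-3}{2q}\bigr)^{t}\le \tfrac{q-1}{q}(1-3/q)^{t}<1$, which holds for every admissible $q\ge 5$ and removes both the delicate adjacent/non-adjacent balancing and the ``for $q$ large'' hedge.

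The genuine gap is the lower bound. Your counting argument gives only $q-s\le 2^{s}$, i.e.\ $\beta(P_q)\ge\log_2\bigl(q-\beta(P_q)\bigr)$, and this cannot be upgraded to the stated $\beta(P_q)\ge\log_2 q$: for $q=17$ the value $s=4$ satisfies $2^{4}+4\ge 17$, so the counting bound permits $\beta=4<\log_2 17$; the ``harmless correction'' is harmless only at the level of $\Theta(\log_2 q)$, not of the exact inequality. Indeed your own observation that $\beta(P_5)=2<\log_2 5$ shows that the literal inequality cannot follow from any diameter-$2$ counting argument (it is even violated at $q=5$), so the clean constants are precisely what the citation to~\cite{FM04} is importing, not something re-proved here. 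What your argument does establish — after the repair above — is $\beta(P_q)=\Theta(\log_2|V|)$ together with $\beta(P_q)\le 2\log_2 q$, which is all the surrounding discussion in the paper actually uses; but as a proof of the proposition as stated, the lower bound falls short.
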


\subsection{Generalized quadrangles}

The graphs obtained from generalized quadrangles form another family of strongly regular graphs.  Let $s,t$ be positive integers. A {\em generalized quadrangle} $\GQ(s,t)$ is an incidence structure, i.e.\ a set of points and lines, such that
\begin{itemize}
\item there are $s+1$ points on each line,
\item there are $t+1$ lines passing through each point,
\item for a point $P$ that does not lie on a line $L$, there is exactly one line passing through $P$ and intersecting $L$.
\end{itemize}
Such an incidence structure has $(st+1)(s+1)$ points and $(st+1)(t+1)$ lines. A trivial example is the incidence structure given by a square grid of size $s\times s$ which is a $\GQ(s-1,1)$. The {\em dual} of a generalized quadrangle is obtained by reversing the role of the lines and the points. In particular, the dual of a $\GQ(s,t)$ is a $\GQ(t,s)$.

Adjacency graphs can be naturally obtained from generalized quadrangles: consider the points as vertices and two vertices are adjacent if the corresponding points belong to a common line. For example, the adjacency graph of the square grid is exactly the Cartesian product of two cliques of size $s$, $K_{s}\square K_{s}$, already mentioned in Section~\ref{sec:product}. By abuse of notation, $\GQ(s,t)$ will also denote the adjacency graph of a generalized quadrangle with parameters $s$ and $t$.

Observe that a $\GQ(s,t)$ is a strongly regular graph $\srg((st+1)(s+1),s(t+1),s-1,t+1)$. Indeed, any vertex has degree $k=s(t+1)$, any pair of adjacent vertices has $s-1$ common neighbours and any pair of non adjacent vertices has $t+1$ common neighbours. From these values, we can easily compute the smallest size of symmetric differences of closed neighbourhoods : $$d=2s(t+1)-2\max(s,t).$$
We have $d>k+1$ if and only if $\GQ(s,t)$ is not trivial, i.e.\ $s>1$ and $t>1$. In that case, the following inequalities, for which Cameron gave a short combinatorial proof~\cite{C74}, hold.

\begin{lemma}[Higman's inequality~\cite{H71,H74}] For a $\GQ(s,t)$, if $s>1$ and $t>1$, then $t\le s^2$ and dually $s\le t^2$.
\end{lemma}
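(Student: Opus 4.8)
Higman's inequality will follow from a double count inside a point‑neighbourhood together with the Cauchy--Schwarz inequality; the dual statement $s\le t^{2}$ is then obtained verbatim by running the argument in the dual $\GQ(t,s)$. Two features of a $\GQ(s,t)$ are used throughout: as noted above it is strongly regular with $\lambda=s-1$ and $\mu=t+1$, and its collinearity graph is triangle‑free outside a line, so that two collinear points have all their common neighbours on the line joining them and an off‑line point is collinear with exactly one point of a line.

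First I would fix a point $x$ and partition $V$ into $\{x\}$, $A:=N(x)$ and $B:=V\setminus N[x]$, so that $|A|=s(t+1)$ and $|B|=s^{2}t$. From the axioms one reads off the local data $|N(a)\cap A|=s-1$ and $|N(a)\cap B|=st$ for $a\in A$ (the line $ax$ supplies $x$ and $s-1$ neighbours in $A$, and each of the other $t$ lines through $a$ supplies $s$ neighbours in $B$), and $|N(b)\cap A|=t+1$, $|N(b)\cap B|=(s-1)(t+1)$ for $b\in B$. The three geometric facts I would isolate are: (i) if $a,a'\in A$ are collinear then the line $aa'$ passes through $x$ — otherwise $a$ and $a'$ would be two points of $aa'$ collinear with the off‑line point $x$ — so no common neighbour of $a,a'$ lies in $B$, i.e.\ $|N(a)\cap N(a')\cap B|=0$; (ii) if $a,a'\in A$ are non‑collinear then $x$ is among their $t+1$ common neighbours and no other common neighbour can be collinear with $x$ without creating a triangle, whence $|N(a)\cap N(a')\cap B|=t$; (iii) if $b,b'\in B$ are collinear then, since $x\notin bb'$, exactly one point of $bb'$ is collinear with $x$, and it is one of the $s-1$ common neighbours of $b$ and $b'$ (it is neither $b$ nor $b'$), so $|N(b)\cap N(b')\cap A|=1$.

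Next I would put $p(b,b'):=|N(b)\cap N(b')\cap A|$ for ordered pairs of distinct $b,b'\in B$; by (iii) it equals $1$ on collinear pairs and lies in $\{0,\dots,t+1\}$ on non‑collinear pairs. Counting triples $(a,b,b')$ with $a\in A$, $b\ne b'\in B$ and $a\sim b$, $a\sim b'$ from the $a$‑side gives $\sum_{b\ne b'}p(b,b')=\sum_{a\in A}|N(a)\cap B|\bigl(|N(a)\cap B|-1\bigr)=s^{2}t(t+1)(st-1)$; removing the $s^{2}t(s-1)(t+1)$ collinear pairs (each contributing $1$) leaves $\sum_{\mathrm{nc}}p=s^{3}t(t+1)(t-1)$ over the non‑collinear ordered pairs. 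In the same way $\sum_{b\ne b'}p(b,b')^{2}=\sum_{(a,a')\in A^{2}}|N(a)\cap N(a')\cap B|^{2}-\sum_{(a,a')\in A^{2}}|N(a)\cap N(a')\cap B|$, which by (i), (ii) and the counts of collinear and non‑collinear ordered pairs inside $A$ evaluates to $s^{2}t(t+1)(st+t^{2}-t-1)$; removing the collinear pairs of $B$ again leaves $\sum_{\mathrm{nc}}p^{2}=s^{2}t(t+1)(t-1)(s+t)$. Finally the number of non‑collinear ordered pairs in $B$ is $N:=s^{2}t(s^{2}t-st-s+t)$, and the Cauchy--Schwarz inequality $\bigl(\sum_{\mathrm{nc}}p\bigr)^{2}\le N\cdot\sum_{\mathrm{nc}}p^{2}$ reduces, after cancelling the common factor $s^{4}t^{2}(t+1)(t-1)$ (legitimate since $t>1$), to $s^{2}(t^{2}-1)\le (s+t)(s^{2}t-st-s+t)$, that is $0\le (s-1)\,t\,(s^{2}-t)$; as $s>1$ and $t>0$ this is exactly $t\le s^{2}$.

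The delicate point is not any individual step but keeping the bookkeeping consistent and, above all, applying Cauchy--Schwarz only to the non‑collinear pairs: on collinear pairs $p\equiv 1$ is far from its average, so including them makes the inequality lossy (it already fails for $\GQ(2,2)$). The genuinely geometric input sits in (i)--(iii), all consequences of triangle‑freeness and the unique‑projection axiom; granting these, what remains is the variance computation above, whose non‑negativity is Higman's inequality.
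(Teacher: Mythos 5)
Your proof is correct --- I verified each count. With $|A|=s(t+1)$, $|B|=s^2t$, the facts (i)--(iii) are right consequences of the one-projection axiom, and the sums do come out as you state: $\sum_{\mathrm{nc}}p=s^3t(t+1)(t-1)$, $\sum_{\mathrm{nc}}p^2=s^2t(t+1)(t-1)(s+t)$, $N=s^2t(s^2t-st-s+t)$, and Cauchy--Schwarz over the non-collinear ordered pairs reduces exactly to $0\le (s-1)t(s^2-t)$, hence $t\le s^2$, with the dual obtained in the dual $\GQ(t,s)$. Note that the paper itself contains no proof of this lemma: it is quoted as Higman's inequality with references, and the text only points to Cameron's short combinatorial proof. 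Your argument is essentially that classical proof in spirit --- a double count plus a variance/Cauchy--Schwarz step --- though you anchor it at a single point $x$ and average over ordered pairs of points of $B$, whereas Cameron's original version fixes a pair of non-collinear points and computes the variance of the number of their common neighbours met by a third point; the two are of comparable length and difficulty, and yours is a perfectly acceptable self-contained substitute for the citation. Two minor remarks: you implicitly use that two collinear points lie on a unique line (partial linearity), which is part of the standard definition of a generalized quadrangle even though the paper's three axioms do not spell it out; and your parenthetical that the unrestricted version ``already fails for $\GQ(2,2)$'' is misleading --- Cauchy--Schwarz over all ordered pairs of $B$ is of course still a true inequality there, it is merely too lossy to imply $t\le s^2$ (for instance it does not exclude $(s,t)=(2,5)$), so the real point, as you say, is that restricting to non-collinear pairs is what makes the bound sharp enough.
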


From now on, we assume that $s>1$ and $t>1$. We obtain the following bounds on $\IDf$ for  generalized quadrangles.

\begin{proposition}\label{prop:fracboundsGQvt}
Let $G$ be a vertex-transitive $\GQ(s,t)$ with $s>1$ and $t>1$. Let $n$ denote the number of vertices of the graph $G$. We have
$$2^{-5/4}\cdot n^{1/4}\leq \IDf(G)\leq 2\cdot n^{2/5}.$$
\end{proposition}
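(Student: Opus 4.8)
The strategy is to apply Proposition~\ref{prop:idftransitive}, which for a vertex-transitive graph gives $\IDf(G)=n/\min(k+1,d)$, and then bound $\min(k+1,d)$ from above and below in terms of $n$ alone, using the explicit formulas $k=s(t+1)$, $d=2s(t+1)-2\max(s,t)$, and $n=(st+1)(s+1)$ together with Higman's inequality $s\le t^2$, $t\le s^2$. Since we are in the nontrivial case $s>1$, $t>1$, we showed $d>k+1$, so $\min(k+1,d)=k+1=s(t+1)+1$, and hence $\IDf(G)=\frac{(st+1)(s+1)}{s(t+1)+1}$. So the whole problem reduces to showing
$$2^{-5/4}\,n^{1/4}\ \le\ \frac{(st+1)(s+1)}{s(t+1)+1}\ \le\ 2\,n^{2/5},$$
i.e.\ estimating this rational function of $s,t$ against powers of $n=(st+1)(s+1)$.

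First I would get clean two-sided estimates on the pieces. Up to constant factors, $n\asymp s^2 t$ (more precisely $s^2t\le n=(st+1)(s+1)\le 4s^2t$ for $s,t\ge 2$, say), $k+1\asymp st$, and the denominator $s(t+1)+1$ is between $st$ and, say, $3st$. So $\IDf(G)=\frac{(st+1)(s+1)}{s(t+1)+1}\asymp \frac{s^2t}{st}=s$; in fact one can pin it down to roughly $s\le \IDf(G)\le \text{const}\cdot s$. Thus the statement becomes: $n^{1/4}\lesssim s\lesssim n^{2/5}$ up to the stated explicit constants. The lower bound $s\gtrsim n^{1/4}$ comes from $t\le s^2$: then $n\asymp s^2t\le s^4$, so $s\gtrsim n^{1/4}$. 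The upper bound $s\lesssim n^{2/5}$ comes from $s\le t^2$: then $s^{1/2}\le t$, so $n\asymp s^2 t\ge s^2\cdot s^{1/2}=s^{5/2}$, giving $s\le n^{2/5}$ up to constants. These are exactly the two extremes where one of Higman's inequalities is tight, which is reassuring.

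The remaining work is purely bookkeeping of the constants: track the exact inequalities $s^2t\le n\le 4s^2t$ (valid for $s\ge 2$, $t\ge 1$, since $(st+1)(s+1)\le 2st\cdot 2s=4s^2t$ and $\ge st\cdot s=s^2t$), the bound $s(t+1)+1\le 3st$ and $\ge st+1> st$, and $(st+1)(s+1)\ge s^2t$, then feed Higman's inequalities in at the right moment and check that the constants $2^{-5/4}$ and $2$ survive. I expect the main (mild) obstacle to be making sure the crude constant-factor estimates are tight enough to land exactly at $2^{-5/4}$ and $2$ rather than some worse constant — one may need to be slightly careful using $s,t\ge 2$ to sharpen the bounds on $n$ and on the denominator, and possibly to treat the boundary behaviour (the tightness cases $t=s^2$ for the lower bound and $s=t^2$ for the upper bound) when optimizing the constant. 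None of this requires any idea beyond Proposition~\ref{prop:idftransitive}, the $\srg$ parameters of $\GQ(s,t)$, and Higman's inequality.
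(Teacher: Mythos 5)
Your plan is essentially the paper's proof: it too applies Proposition~\ref{prop:idftransitive} with $\min(k+1,d)=k+1$ (since $d>k+1$ for $s,t>1$) to get $\IDf(G)=\frac{(st+1)(s+1)}{st+s+1}$, pins this between $\tfrac{s}{2}$ and $2s$, and converts Higman's inequalities into $\left(\tfrac{n}{2}\right)^{1/4}<s<n^{2/5}$. The only adjustment needed is the one you anticipate: the crude estimates you quote (denominator $\le 3st$, $n\le 4s^2t\le 4s^4$) would only give a lower-bound constant $\tfrac{1}{3\sqrt{2}}<2^{-5/4}$, so you must sharpen them to $st+s+1<2st$ (valid since $s+1<st$ for $s,t\ge 2$) and $n\le s^4+s^3+s+1<2s^4$, exactly as the paper does, to land on $2^{-5/4}n^{1/4}$.
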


\begin{proof}
Let $G$ be a vertex-transitive $\GQ(s,t)$ with $s>1$ and $t>1$. Then $n=(st+1)(s+1)$ is the number of vertices of $G$. We have by Proposition~\ref{prop:idftransitive}
$$\ID_f(G)=\frac{(st+1)(s+1)}{s(t+1)+1}=\frac{s^2t}{st+s+1}+1.$$
As $st<st+s+1<2st$, we obtain $\frac{1}{2}s<\ID_f(G)<2s$.

Moreover, using the previous lemma, we obtain
$$s^{5/2}\le s^2 t < s^2t+st+s+1=n\le s^4+s^3+s+1<2\cdot s^4.$$
So $\left(\frac{1}{2}n\right)^{1/4}<s<n^{2/5}$. It follows that $\left(\frac{1}{2}\right)^{5/4}n^{1/4}<\ID_f(G)<2n^{2/5}$.
\end{proof}

Constructions of $\GQ(s,t)$ are known only for $(s,t)$ or $(t,s)$ in the set
$$\{(q,q), (q,q^2),(q^2,q^3), (q-1,q+1)\}$$
where $q$ is a prime power. Many of them are based on finite geometries. Generalized quadrangles coming from finite classical polar spaces of rank $2$ are given in Table~\ref{tab:polar_spaces}. For more information on these geometric structures, see e.g. \cite{HT01}.
It is well known that these polar spaces give rise to generalized quadrangles and they are often referred to as the {\em classical generalized quadrangles} \cite{PT84}.
As an example, the Cartesian product $K_{q+1}\square K_{q+1}$ can be seen as the adjacency graph of the incidence structure $Q^+(3,q)$ obtained from the points of a hyperbolic quadric in a finite projective space (when $q$ is a prime power).

\begin{table}
\begin{center}
\setlength{\extrarowheight}{-3pt}
\begin{tabular}{|c|c|c||l|}\hline
	Polar space & Name & $(s,t)$ &\\\hline
 	$Q^+(3,q)$ & Hyperbolic&$(q,1)$ & a grid\\
 	$ Q(4,q)$ & Parabolic & $(q,q)$ & dual of $W(3,q)$\\
   	$Q^-(5,q)$ & Elliptic & $(q,q^2)$& dual of $H(3,q^2)$\\
    $H(3,q^2)$ & Hermitian & $(q^2,q)$ & dual of $Q^-(5,q)$\\
    $H(4,q^2)$& Hermitian & $(q^2,q^3)$ & \\
    $W(3,q)$ & Symplectic & $(q,q)$ & dual of $ Q(4,q)$\\\hline
\end{tabular}
\end{center}
\caption{The finite classical polar spaces of rank $2$.}\label{tab:polar_spaces}
\end{table}

There are other generalized quadrangles known, however they have the same parameters as the ones given in Table~\ref{tab:polar_spaces} or they have parameters $(q-1,q+1)$ or $(q+1,q-1)$. We provide identifying codes of optimal order for some cases. A summary of our results in generalized quadrangles is given in Table~\ref{tab:resultGQ}.

\begin{table}
\begin{center}
\setlength{\extrarowheight}{-3pt}
\begin{tabular}{|c|c|c|c|c|c|}\hline
  GQ & $(s,t)$ & $n$ & Lower bound & Upper bound & Order \\ \hline 	
$T_2^*(\mathcal O)$ & $(q-1,q+1)$ & $q^3$ & $3q-7$ & $3q-3$ & $n^{1/3}$\\ 
$Q(4,q)$ & $(q,q)$ & $(q^2+1)(q+1)$ & $5q-2$ & $3q-4$ & $n^{1/3}$\\ 
$Q^-(5,q)$ & $(q,q^2)$ & $(q^3+1)(q+1)$ & $5q$ & $3q+2$ & $n^{1/4}$\\ 
$H(3,q^2)$ & $(q^2,q)$ & $(q^3+1)(q^2+1)$ & $5q^2-2$ & $2q^2-2$ & $n^{2/5}$\\ \hline
\end{tabular}
\end{center}
\caption{Results obtained on optimal values of integer identifying codes of some generalized quadrangles.  The number of vertices is denoted by $n$. Every order matches the order of the optimal value of fractional identifying codes. Note that in the first line, $q$ must be a power of $2$ whereas in the other cases, $q$ is any prime power. Also in the first line, $\mathcal{O}$ is a hyperconic.}\label{tab:resultGQ}
\end{table}

\subsubsection{Identifying codes in $T_2^*(\mathcal O)$, a particular $\GQ(q-1,q+1)$}

\begin{proposition}\label{prop:T2*0}
Let $q>2$ be a power of $2$.
There exists a $\GQ(q-1,q+1)$ with an identifying code of size $3q-3=\Theta(n^{1/3})$ where $n$ is the number of vertices.
\end{proposition}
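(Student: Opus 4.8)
The plan is to work with the explicit model $\TO$ of a $\GQ(q-1,q+1)$ coming from a hyperconic (hyperoval) $\mathcal{O}$ in $\PG(2,q)$, $q$ a power of $2$. Recall $\TO$ has $q^3$ points, realised as follows: embed $\PG(2,q)$ as a hyperplane $H$ in $\PG(3,q)$, fix a hyperoval $\mathcal{O}$ of size $q+2$ in $H$; the points of $\TO$ are the $q^3$ points of $\PG(3,q)\setminus H$, and the lines of $\TO$ are the lines of $\PG(3,q)$ that are not contained in $H$ and meet $H$ in a point of $\mathcal{O}$. Two points (i.e.\ affine points) are collinear in $\TO$ iff the line joining them, extended to $\PG(3,q)$, meets $H$ in a point of $\mathcal{O}$. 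I would set up affine coordinates so that this collinearity relation becomes a concrete linear-algebraic condition, and then exhibit a specific set $C$ of $3q-3$ affine points — most naturally, the union of (portions of) three lines of the adjacency graph chosen so that the lines through the $q+2$ points of $\mathcal{O}$ are "covered" in a controlled way. The size $3q-3$ and the target $\Theta(n^{1/3})$ with $n=q^3$ then match the fractional lower bound $2^{-5/4}n^{1/4}$ only up to order — but here the point is that $\IDf(\TO)$ itself has order $n^{1/3}$: indeed by Proposition~\ref{prop:idftransitive}, $\IDf(\TO)=\frac{q^3}{\min(k+1,d)}$ with $k=(q-1)(q+2)$, and $d=2(q-1)(q+2)-2\max(q-1,q+1)=2(q-1)(q+2)-2(q+1)$, so $\min(k+1,d)=k+1=(q-1)(q+2)+1$, giving $\IDf(\TO)=\frac{q^3}{q^2+q-1}=\Theta(q)=\Theta(n^{1/3})$. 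So I only need to produce an identifying code of size $O(q)$ and invoke the corollary/lower bounds to pin down the order; the displayed constant $3q-3$ comes from the construction.

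The verification splits into the two defining properties. For \emph{domination}: since every affine point lies on exactly $t+1=q+2$ lines of $\TO$, and each such line carries $s+1=q$ points, a set $C$ that is a union of a few full adjacency-lines dominates a lot; I would choose the three lines (or near-lines) in $C$ to pass through a common point, or to form a "triangle"-like configuration through three distinct points of $\mathcal{O}$, and check by the line-counting axiom of a GQ that every affine point is collinear with at least one point of $C$. For \emph{separation}: I must show that for any two distinct affine points $u,v$, some $c\in C$ lies in exactly one of $N[u],N[v]$. The key structural fact to exploit is that in a GQ, two collinear points lie on a unique common line, and two non-collinear points have exactly $t+1$ common neighbours lying one-per-line through each (the "regulus" / trace structure). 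Translating this into the hyperconic coordinates, the common neighbours of a pair correspond to the points of $\mathcal{O}$ "seen" in a certain way; choosing $C$ to meet the lines through $\mathcal{O}$ in a spread-out manner (e.g.\ one extra point per point of $\mathcal{O}$, contributing the $+ (q-1)$ or so beyond the $2q$ from two lines) should guarantee that no two affine points can have the same trace on $C$. I would organise this as a short case analysis: (i) $u\sim v$, (ii) $u\not\sim v$, and within each, sub-cases according to how $u,v$ sit relative to the lines of $C$ and the hyperoval $\mathcal{O}$.

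The main obstacle I anticipate is the separation argument — specifically, ruling out the "bad pairs" $u,v$ that have identical intersection with $C$. Domination is essentially a counting exercise once $C$ is chosen, but separation requires genuinely using the geometry of the hyperoval: the reason $q$ must be a power of $2$ is precisely that hyperovals (size $q+2$ rather than $q+1$ conics) exist only in even characteristic, and this extra point of $\mathcal{O}$ is what makes a code of size $3q-3$ (rather than something larger) work. Concretely, I expect the hard step to be showing that the contribution of the "$+(q-1)$" part of $C$ — the points added beyond two base lines — can be chosen (one aligned with each point of $\mathcal{O}$, minus a few coincidences, hence $q+2 - \text{(overlap)} = q-1$ after accounting for the three shared points with the base lines) so that every pair of affine points is separated. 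I would handle this by writing the collinearity condition as: $u,v$ collinear iff $(u-v)$, as an affine direction, points at a hyperoval point; then a point $c$ separates $u,v$ iff exactly one of the directions $c-u$, $c-v$ points at $\mathcal{O}$; and I would verify that the chosen $C$ realises, for every ordered pair of distinct directions-at-infinity, a witness $c$. If a fully explicit coordinate construction proves too unwieldy, the fallback is to prove existence of such a $C$ of size $3q-3$ by a direct combinatorial argument on the line-hyperoval incidence structure, which still yields the stated $\Theta(n^{1/3})$ bound via Proposition~\ref{prop:galbound} and Proposition~\ref{prop:lower}.
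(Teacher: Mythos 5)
Your setup is the right one (the $\TO$ model of a $\GQ(q-1,q+1)$, with domination and separation to check, and the correct observation that $n=q^3$ makes any $O(q)$-size code of order $n^{1/3}$), but the proposal stops exactly where the proof has to happen: you never fix the code $C$, and the separation argument is left as a hope ("should guarantee\dots", "I expect the hard step to be\dots", plus an unproven "fallback"). The shape you gesture at — two base lines contributing $2q$ points plus roughly $q-1$ scattered points aligned with the hyperoval — is not the paper's construction and comes with no argument that bad pairs are ruled out; nothing in the sketch identifies which pairs of affine points could share a trace on such a $C$, which is precisely the content of the proposition. The role you assign to evenness of $q$ (mere existence of a hyperoval) is also not where the difficulty sits: the paper takes $\mathcal{O}=\C\cup\{N\}$ a hyperconic and uses the \emph{nucleus} $N$ in an essential way.

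Concretely, the paper's code is the set of affine points of three lines of $\TO$ through $N$ that span $\PG(3,q)$ (size $3q$), and the separation proof rests on a geometric lemma you would still need: for non-collinear affine $P,Q$, the intersection of the cones $P\C$ and $Q\C$ lies in a plane through $N$ and $PQ\cap H_\infty$, so the $q+2$ common neighbours are $q$ coplanar points (in a plane through $N$) plus one point on each of $PN$ and $QN$. If two points off the code lines had equal traces, their three projections $Q_1,Q_2,Q_3$ would force the three code lines to be coplanar or force one of the points onto a code line — contradiction; the collinear case is immediate since common neighbours lie on the joining line. Finally, $3q-3$ is reached not by trimming each line ad hoc but by deleting one code point together with its two projections on the other two lines, checking (via the no-triangle property of a GQ) that domination survives and that the deleted trace entries can be reconstructed, so identification is preserved. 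Without a concrete $C$, the cone-intersection lemma (or a substitute), and this deletion step, the proposal does not establish the existence of an identifying code of size $3q-3$.
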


Before giving the proof, we will consider a particular construction of a $\GQ(q-1,q+1)$ and give some structural properties. This construction is done in finite projective geometry. We recall some definitions for readers unfamiliar with them. For more information on finite projective geometry, see e.g.~\cite{H71,HT01}.

Let $q$ be a power of $2$. We set ourselves in the $3$-dimensional projective space $\PG(3,q)$ over the finite field $\mathbb F_q$ of order $q$. The points of $\PG(3,q)$ can be described using four coordinates $(X_0,X_1,X_2,X_3) \in \mathbb F_q^4 \setminus\{\mathbf{0}\}$ where two coordinates that are proportional refer to the same point.
Consider the hyperplane $H_\infty$ of equation $X_0=0$ in $\PG(3,q)$ and the conic $\C$ of equation $X_1 X_3 - X_2^2=0$ in the hyperplane $H_\infty$. Any line of $H_\infty$ intersects $\C$ in 0, 1 or 2 points. A line intersecting $\C$ in one point is {\em tangent} to $\C$. There is a special point, $N(0,0,1,0)$, called the {\em nucleus} of $\C$, that lies on all tangents of $\C$. Then any other point of $H_\infty$ lies on exactly one tangent of $\C$. The set $\mathcal{O}=\C\cup\{N\}$ is a {\em hyperconic}. This set has the property that each line of $H_\infty$ intersects $\mathcal{O}$ in 0 or 2 points.

Now consider the following incidence structure $T_2^*(\mathcal{O})=(\mathcal{P},\mathcal{L})$, where the set $\mathcal{P}$ of points is the set of affine points, i.e.\ points of $\PG(3,q)$ not in $H_\infty$ and the set $\mathcal{L}$ of lines is the set of the lines through a point of $\mathcal{O}$ not lying in $H_\infty$. This incidence structure is well-known to be a $\GQ(q-1,q+1)$ (see for example \cite[Theorem 3.1.3]{PT84}).

We will now construct an identifying code in $T_2^*(\mathcal{O})$.
In $T_2^*(\mathcal{O})$, the neighbourhood of a point $P$ is composed of a cone $P\C$ (all the  lines going through $P$ and a point of $\C$) and the line $PN$, where the points of $H_\infty$ are removed. The common neighbours of two adjacent vertices are the $q-2$ points lying on the unique line incident with these two vertices. In the case of non adjacent vertices, we first determine the intersection of their two cones.

\begin{lemma}\label{lem:intersection-cones-T2}
Consider two distinct affine points $P$ and $Q$ such that $PQ \cap H_\infty \notin \mathcal{O}$. The intersection of the two cones $P\C$ and $Q\C$ consists of the points of the conic $\C$ and of points lying in a plane containing $N$ and $PQ\cap H_\infty$.
\end{lemma}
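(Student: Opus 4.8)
The plan is to work directly with coordinates in $\PG(3,q)$ and intersect the two quadratic cones $P\C$ and $Q\C$. First I would observe that the conic $\C$ lies in $H_\infty$ and is certainly contained in both cones $P\C$ and $Q\C$ (each cone contains its base conic); so the conic $\C$ is always part of the intersection, which accounts for one of the two asserted components. The real content is to show that the remaining part of $P\C \cap Q\C$ lies in a single plane, namely the plane spanned by $N$ and the point $R := PQ \cap H_\infty$ (note $R$ is well-defined and $R\notin\mathcal{O}$ by hypothesis). Since $N$ and $R$ are two distinct points of $H_\infty$, the line $NR$ lies in $H_\infty$; a plane containing both $N$ and $R$ but not equal to $H_\infty$ is then determined by one further affine point, and the claim is that this plane is the correct one — in fact I expect it to be the plane $\langle P, Q, N\rangle$, since that plane contains $P$, $Q$, hence the line $PQ$, hence $R$, and also contains $N$.

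The key computation is as follows. Write a cone $P\C$ as the set of points $X$ on a line $PC$ with $C\in\C$; algebraically, if $C = \lambda P + \mu(\text{something in }H_\infty)$ this is the vanishing of a quadratic form $f_P$ obtained by "projecting" the conic form $Q(Y) = Y_1 Y_3 - Y_2^2$ from the vertex $P$. Concretely, a point $X\notin H_\infty$ lies on $P\C$ iff the line $XP$ meets $H_\infty$ in a point of $\C$, i.e.\ iff $Q(\pi_P(X)) = 0$ where $\pi_P(X) = XP\cap H_\infty$ is the projection from $P$ onto $H_\infty$; this $\pi_P$ is linear (a $3\times 4$ matrix), so $f_P := Q\circ\pi_P$ is a quadratic form on $\PG(3,q)$ and $P\C = \{f_P = 0\}\setminus$(stuff), similarly $f_Q = Q\circ\pi_Q$. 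Then $P\C\cap Q\C$ is cut out by $f_P = f_Q = 0$, equivalently by $f_P = 0$ together with $f_P - f_Q = 0$. The crucial point, which I would verify by direct computation (and where the hypothesis $R\notin\mathcal{O}$, i.e.\ $Q(R)\ne 0$, enters), is that $f_P - f_Q$ is a nonzero \emph{linear} form — the quadratic parts cancel because $\pi_P$ and $\pi_Q$ differ only "in the $P$–$Q$ direction", and in characteristic $2$ the relevant cross term degenerates favorably. Once $f_P - f_Q$ is linear, its zero set is a plane $\Pi$, and one checks $N\in\Pi$ and $R\in\Pi$ (equivalently $P,Q\in\Pi$) by plugging in; this plane is then exactly the one described, and $P\C\cap Q\C = \C \cup (\{f_P=0\}\cap \Pi)$.

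The main obstacle I anticipate is the bookkeeping in showing that the quadratic form $f_P - f_Q$ actually collapses to a linear form: one must choose good coordinates (e.g.\ normalizing so that the line $PQ$ is in a convenient position, or parametrizing $P = (1,a_1,a_2,a_3)$, $Q=(1,b_1,b_2,b_3)$ with $R$ the affine-direction $(0, a_1-b_1, a_2-b_2, a_3-b_3)$) and then grind through the cancellation, using heavily that $\mathrm{char}\,\mathbb{F}_q = 2$ (so that $2 = 0$ and squaring is additive). It will also be worth double-checking the edge behavior on $H_\infty$ — the cones $P\C$, $Q\C$ as defined live among affine points, so when we solve $f_P = f_Q = 0$ projectively we pick up the base conic $\C$ in $H_\infty$ as spurious-but-genuine intersection, matching the statement; no other point of $H_\infty$ appears because $R\notin\mathcal{O}$ guarantees the line $NR$ meets $\mathcal{O}$ only in... (indeed $NR$ meets $\mathcal O$ in $0$ points here, since $R\notin\mathcal O$ and every line through $N$ is tangent, i.e.\ meets $\C$ in exactly one point — so $NR\cap\mathcal O$ is a single point, which one checks does not satisfy $f_P=0$ unless it is on $\C$). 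Once the linear-form claim is established the rest is immediate.
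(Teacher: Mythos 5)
Your core strategy is exactly the paper's: restrict to affine points, write each cone as the zero locus of an explicit quadric, and observe that in characteristic $2$ the difference of the two equations loses its quadratic part $X_1X_3-X_2^2$ (the cross term of $(b-v_2)^2$ dies), so the residual intersection satisfies a linear equation, i.e.\ lies in a plane, which one then checks contains $N$ and $R=PQ\cap H_\infty$. That computation does go through as you outline, with the minor caveat that projectively $f_P-f_Q$ is $X_0$ times a linear form; it is genuinely linear only after normalizing $X_0=1$ on affine points, which is all the lemma needs.

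However, your identification of the plane is wrong, and the parenthetical ``equivalently $P,Q\in\Pi$'' is a step that would fail. Writing $P=(1,a,b,c)$ and $Q=(1,\alpha,\beta,\gamma)$, the plane is $\Pi:\ \bigl((ac-b^2)-(\alpha\gamma-\beta^2)\bigr)X_0-(c-\gamma)X_1-(a-\alpha)X_3=0$. It does contain $N(0,0,1,0)$ (there is no $X_2$ term) and $R=(0,a-\alpha,b-\beta,c-\gamma)$ (using characteristic $2$), but evaluating the left-hand side at $P$ gives $(a+\alpha)(c+\gamma)+(b+\beta)^2$, which is precisely the conic form evaluated at $R$ and is nonzero exactly because $R\notin\mathcal{O}$. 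So $\Pi$ is \emph{not} $\langle P,Q,N\rangle$, and ``contains $N$ and $R$'' is not equivalent to ``contains $P$ and $Q$''; had you pursued the check $P,Q\in\Pi$, the computation would have refuted it. Relatedly, the hypothesis $R\notin\mathcal{O}$ is not what makes $f_P-f_Q$ nonzero (that already follows from $P\neq Q$); its role here is to exclude $R=N$, i.e.\ $a=\alpha$, $c=\gamma$, in which case the linear factor is proportional to $X_0$ and the ``plane'' degenerates to $H_\infty$ (it is also what makes the residual curve a conic in the subsequent remark). Finally, your side discussion of $NR\cap\mathcal{O}$ is garbled: every line of $H_\infty$ through $N$ is tangent to $\C$, so $NR$ meets $\mathcal{O}$ in two points ($N$ and one point of $\C$), not zero; but this digression is unnecessary, since each cone meets $H_\infty$ exactly in $\C$, so no boundary points beyond $\C$ can arise.
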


\begin{proof}
Consider two distinct affine points $P(1,a,b,c)$ and $Q(1,\alpha,\beta,\gamma)$ such that $PQ \cap H_\infty \notin \mathcal{O}$. Consider the cones $P\C$ and $Q\C$ in $\PG(3,q)$. It is clear that the conic $\C$ belongs to $P\C\cap Q\C$. Consider now a point $V(1,v_1,v_2,v_3)$ not lying in $H_\infty$. Then $V$ belongs to $P\C$ if and only if
\begin{eqnarray*}
  && (0,a-v_1, b-v_2,c-v_3)\in \C\\
 &\iff &(a-v_1)(c-v_3)-(b-v_2)^2=0 \\
 &\iff& (ac-b^2)- c v_1 - a v_3+ (v_1 v_3 - v_2^2)=0.
 \end{eqnarray*}
A similar computation holds for $V\in Q\C$. Hence $V\in P\C\cap Q\C$ implies that
\[(ac-b^2)-(\alpha\gamma-\beta^2) - (c-\gamma) v_1 - (a-\alpha) v_3=0.\]
So $V$ lies in the plane $\pi$ of equation $((ac-b^2)-(\alpha\gamma-\beta^2))X_0 - (c-\gamma) X_1 - (a-\alpha) X_3=0$.
Consider the intersection of $H_\infty$ and $\pi$. It is the line $\ell$ satisfying the equations $X_0=0$ and $- (c-\gamma) X_1 - (a-\alpha) X_3=0$. Clearly, the line $\ell$ contains the nucleus $N(0,0,1,0)$ and also the point $PQ\cap H_\infty=(0,a-\alpha,b-\beta,c-\gamma)$.
\end{proof}

\begin{remark} In the previous statement, the points, arising as the intersection of the two cones, lie in a plane containing $N$ and $PQ\cap H_\infty$, and they actually form a conic $\C'$. Since the two quadratic cones intersect in an algebraic curve of degree $4$ which already contains the conic $\C$, the remaining curve $\Gamma$ of degree $2$ is either a conic $\C'$, either a line with multiplicity $2$ or two lines. The last two cases are in contradiction with the fact that $P$ and $Q$ are two distinct points with $PQ \cap H_\infty \notin \C\cup\{N\}$. So it follows that $\Gamma=\C'$.
\end{remark}

\begin{corollary}
Consider two distinct non adjacent vertices $P$ and $Q$ of $T_2^*(\mathcal{O})$. Their common neighbours are $q$ points lying in a plane containing $N$ and $PQ\cap H_\infty$, a point of the line $PN$ and a point of the line $QN$.
\end{corollary}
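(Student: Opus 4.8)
The plan is to read off the common neighbours of $P$ and $Q$ from Lemma~\ref{lem:intersection-cones-T2} and the two axioms of the generalized quadrangle, and then to check the total against the strongly regular parameters. Recall first that two affine points are adjacent in $T_2^*(\mathcal{O})$ exactly when the $\PG(3,q)$-line joining them passes through a point of $\mathcal{O}$, i.e.\ when $PQ\cap H_\infty\in\mathcal{O}=\C\cup\{N\}$; so the non-adjacency of $P$ and $Q$ is precisely the hypothesis $PQ\cap H_\infty\notin\mathcal{O}$ under which Lemma~\ref{lem:intersection-cones-T2} applies. Since $T_2^*(\mathcal{O})$ is a $\GQ(q-1,q+1)$, it is strongly regular with $\mu=t+1=q+2$, so $P$ and $Q$ have exactly $q+2$ common neighbours, and it suffices to exhibit $q+2$ distinct ones of the claimed shape.

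A common neighbour is an affine point other than $P,Q$ lying in $(P\C\cup PN)\cap(Q\C\cup QN)$. I would first treat the two cones. By Lemma~\ref{lem:intersection-cones-T2} and the remark following it, $P\C\cap Q\C$ is the union of $\C$ — which lies in $H_\infty$ and contributes nothing affine — and a nondegenerate conic $\C'$ in the plane $\pi$ spanned by $N$ and $PQ\cap H_\infty$. A short computation in characteristic $2$ with the equation of $\pi$ from the proof of Lemma~\ref{lem:intersection-cones-T2} shows that $P\in\pi$ (resp.\ $Q\in\pi$) would force $PQ\cap H_\infty\in\C$; so $P,Q\notin\pi$, hence $\pi$ misses the vertex of the quadratic cone $P\C$ and meets it exactly in $\C'$. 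The line $\pi\cap H_\infty$ contains the nucleus $N$ of $\C$, so it is tangent to $\C$ and meets it in one point; since any point of $\C'$ on $H_\infty$ lies on $P\C\cap H_\infty=\C$, the conic $\C'$ meets $H_\infty$ in at most one point and thus has at least $q$ affine points, each of which lies on a line $PR$ and a line $QR'$ with $R,R'\in\C\subseteq\mathcal{O}$ and is distinct from $P,Q$ (as $P,Q\notin\pi$); these are $q$ common neighbours lying in the plane $\pi$.

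Next I would use the generalized quadrangle axiom on the special lines. Since $P\notin QN$ (else $P,Q,N$ are collinear and $PQ$ meets $\mathcal{O}$ at $N$), there is a unique point of the line $QN$ collinear with $P$; it is affine, it is not $Q$, so it is a common neighbour lying on $QN$, and symmetrically one lies on $PN$. These two are distinct because $PN\neq QN$ (again by non-adjacency), so the lines meet only at $N\in H_\infty$; and neither lies in $\pi$, since otherwise the corresponding line through $N$ would meet $\pi$ twice, hence lie in $\pi$, putting $P$ or $Q$ in $\pi$. Adding these to the $q$ points on $\C'$ gives $q+2$ pairwise distinct common neighbours, which by the parameter count are all of them; in particular $\C'$ supplies exactly $q$ of them, in a plane through $N$ and $PQ\cap H_\infty$, and there is exactly one more on each of $PN$ and $QN$. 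This is the assertion.

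The proof is essentially bookkeeping; the only delicate points are verifying $P,Q\notin\pi$ (so that $\pi$ cuts each cone in the nondegenerate conic $\C'$, not in a degenerate one), using the tangency of $\pi\cap H_\infty$ to $\C$ to pin down $|\C'\cap H_\infty|$, and checking that the three families of common neighbours are pairwise disjoint so that the parameter count $\mu=q+2$ closes everything. No idea beyond Lemma~\ref{lem:intersection-cones-T2} and the generalized quadrangle axioms is needed.
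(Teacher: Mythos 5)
Your proof is correct and follows essentially the same route as the paper: the two special common neighbours are obtained as the unique projections of $P$ onto $QN$ and of $Q$ onto $PN$ via the generalized quadrangle axiom, the remaining ones come from the cone intersection of Lemma~\ref{lem:intersection-cones-T2}, and the count is closed with $\mu=t+1=q+2$. The only difference is that you argue bottom-up (explicitly exhibiting at least $q$ affine points of $\C'$ via the tangency of $\pi\cap H_\infty$ and checking $P,Q\notin\pi$ and disjointness) where the paper argues top-down from the total count; this just supplies details the paper leaves implicit.
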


\begin{proof}
Let $P$ and $Q$ be two distinct non adjacent vertices of $T_2^*(\mathcal{O})$. From the structure of the $\GQ(q-1,q+1)$, $P$ and $Q$ have $q+2$ common neighbours. Consider the lines $PN$ and $QN$. They intersect only in $N$. Since $P$ (respectively $Q$) has a unique projection $P'$ on $QN$ (resp. $Q'$ on $PN$), $P'$ and $Q'$ are two common neighbours. The $q$ other common neighbours come from the intersection of the two cones $P\C$ and $Q\C$. Hence, from the previous lemma, they lie on a plane containing $N$ and $PQ\cap H_\infty$.
\end{proof}

\begin{theorem}\label{thm:icT2O}
The affine points of three lines of $T_2^*(\mathcal{O})$ containing $N$ and spanning $\PG(3,q)$ form an identifying code of $T_2^*(\mathcal{O})$.
\end{theorem}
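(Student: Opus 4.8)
Let $C$ be the set of affine points lying on three lines $L_1,L_2,L_3$ through $N$ that span $\PG(3,q)$. Then $|C|=3(q-1)$ minus overcounting at $N$, but $N\in H_\infty$ is not an affine point, so the three line-sets are disjoint on the affine part and $|C|=3(q-1)=3q-3$. I would verify the two code axioms — domination and separation — directly from the structural lemmas just proved, handling adjacent and non-adjacent pairs separately.

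**Domination.** Every affine point $P$ lies on exactly $t+1=q+2$ lines of the $\GQ$, one for each point of $\mathcal O=\C\cup\{N\}$. In particular $PN$ is a line of the $\GQ$, and it meets each $L_i$ (another line through $N$) in... wait, two lines through $N$ only meet at $N$, which is not affine. So $P$ is dominated by $C$ iff $P$ itself is on some $L_i$, or $P$ has a neighbour on some $L_i$. A neighbour of $P$ on $L_i$ is a common affine point of a line through $P$ and the line $L_i$; since $L_i$ passes through $N\in\mathcal O$, the line $L_i$ is itself a line of the $\GQ$, and by the GQ axiom every point $P$ not on $L_i$ has a unique line through it meeting $L_i$ — that meeting point is affine unless it is $N$, i.e. unless the connecting line is $PN$. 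So $P$ fails to be dominated by $L_i$ exactly when the unique line from $P$ to $L_i$ is $PN$, i.e. when $P,N$ and $L_i$ are "collinear in the pencil" — equivalently when $P\in\langle L_i,N\rangle=L_i$ (since $N\in L_i$) or the connecting line degenerates. I would argue: if $P$ is dominated by none of $L_1,L_2,L_3$, then for each $i$ the connecting line is $PN$, forcing $PN$ to meet all three $L_i$; but $PN$ is a single line through $N$ and the $L_i$ span $\PG(3,q)$, so $PN$ cannot lie in the plane spanned by any two of them while also... — more cleanly: the points $P',N$ where $P'=PN\cap L_i$ would all lie on $PN$, giving $L_i\cap PN\neq\emptyset$ for all $i$; since $L_1,L_2,L_3$ span $\PG(3,q)$ and all pass through $N$, no line other than... actually any line through $N$ meets the plane $\langle L_1,L_2\rangle$, so this needs the spanning hypothesis to rule out $PN$ being a transversal — I expect this is the routine but slightly fiddly part, resolved by noting three lines through $N$ spanning $\PG(3,q)$ means $N$ plus a point from each gives a frame, so a fourth line $PN$ through $N$ meeting all three would force a linear dependence. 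Any point not dominated this way lies on $PN$ and on all three $L_i$, impossible.

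**Separation.** For adjacent vertices $P,Q$: $N[P]\Delta N[Q]$ has size $2(q-2)$ and, by the structure description, consists of the points of the two lines $PN',Q$-analogues — more precisely the neighbours of $P$ not on line $PQ$ (minus $Q$) together with the symmetric part. I would instead use that $N[P]\cap C$ must differ from $N[Q]\cap C$. The key observation: $C$ meets the line $PQ$ in at most... and $C$ contains whole lines $L_i$ through $N$. A clean route is: since the $L_i$ span, for any two distinct affine points $P\neq Q$ there is an $L_i$ such that the plane $\langle L_i, P\rangle$ and $\langle L_i,Q\rangle$ differ, or such that $L_i$ "sees" $P$ and $Q$ differently. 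Concretely, $N[P]\cap L_i$ is the unique projection of $P$ onto $L_i$ (one point, or $P$ itself if $P\in L_i$, or empty if the projecting line is $PN$). So $C$ separates $P,Q$ unless for every $i$ the projections of $P$ and of $Q$ onto $L_i$ coincide (including both-empty). I would show this forces $P=Q$: projection onto $L_i$ is (after choosing coordinates with $N$ at a common point) essentially a linear map whose kernels over the three $L_i$ intersect trivially because the $L_i$ span $\PG(3,q)$. For non-adjacent $P,Q$, I additionally invoke the corollary: the $q$ common neighbours lie in a plane through $N$ and $PQ\cap H_\infty$, plus the two projection points $P'\in QN$, $Q'\in PN$; I would check these interact with the $L_i$ in a controlled way, again using spanning to find a separating $L_i$.

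**Main obstacle.** The genuine work is the separation step for pairs $P,Q$ where $PQ$ passes through one of the chosen lines' "bad" points (so one $L_i$ gives empty or equal projections) — there one must pin down, using the remaining two lines and the spanning condition, that the projection pattern still distinguishes $P$ from $Q$. I expect to set up affine coordinates so that $N=(0,0,1,0)$ and $L_1,L_2,L_3$ become three coordinate-like directions, turning "projection onto $L_i$" into reading off a coordinate, after which equality of all three projections is literally $P=Q$; the only care needed is the degenerate (empty-projection, i.e. $PN\parallel L_i$) cases, which the spanning hypothesis kills because at most one $L_i$ can be coplanar-at-$N$ with any fixed line $PN$ in the relevant sense. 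Domination is comparatively quick once this coordinate picture is in place.
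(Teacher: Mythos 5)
There is a genuine gap at the heart of the separation argument. You correctly reduce separation (for two points $P,Q$ off the three code lines) to the claim that equal projections onto all three lines force $P=Q$, but the justification you offer --- that after choosing coordinates the projection onto each $\ell_i$ becomes ``reading off a coordinate'', so equality of the three projections is ``literally $P=Q$'' --- is false as stated. The projection of an affine point onto a line through $N$ is governed by the conic, not by a linear form: with $N=(0,0,1,0)$, $\C: X_1X_3=X_2^2$ and $\ell_i=\{(1,a_i,b,c_i): b\in\mathbb F_q\}$, the projection of $P=(1,p_1,p_2,p_3)$ onto $\ell_i$ has $b$-coordinate $p_2+\sqrt{(p_1-a_i)(p_3-c_i)}$ (characteristic $2$), which is not a coordinate readout, and equality of projections on a \emph{single} line is a nontrivial quadratic condition on the pair. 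The true statement, which needs an argument, is that for fixed $P\neq Q$ the lines through $N$ on which the projections agree all lie in one plane through $N$ (equivalently, the paper's route: for adjacent $P,Q$ the common neighbours are collinear on $PQ$, and for non-adjacent $P,Q$ they are, by Lemma~\ref{lem:intersection-cones-T2} and its corollary, $q$ points in a plane through $N$ together with one point on each of $PN$, $QN$, and the latter possibility forces $P$ or $Q$ onto a code line); only then does the spanning hypothesis yield a contradiction. You cite the corollary for the non-adjacent case but defer exactly this case analysis (``I would check these interact with the $L_i$ in a controlled way''), and the adjacent case is likewise only gestured at, so the essential step of the theorem is asserted rather than proved. (Your intended computation can in fact be completed: squaring the equality of projections gives an affine-linear condition $a_i(p_3+q_3)+c_i(p_1+q_1)=(p_2+q_2)^2+p_1p_3+q_1q_3$, and affine independence of $(a_1,c_1),(a_2,c_2),(a_3,c_3)$, which is the spanning hypothesis, forces $P=Q$; but none of this is in your write-up.)

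Two smaller points. Your domination discussion is needlessly tangled: for $P\notin\ell_i$ the generalized-quadrangle axiom already produces a unique line of $T_2^*(\mathcal{O})$ through $P$ meeting $\ell_i$ in a \emph{point of the quadrangle}, hence an affine point; the feared degenerate case ``the connecting line is $PN$'' cannot occur because $PN$ and $\ell_i$ meet only in $N$, which is not a point of the quadrangle, so no spanning condition is needed for domination (and your claim that at most one $\ell_i$ is coplanar with $PN$ at $N$ is wrong --- any two lines through $N$ are coplanar --- it is just irrelevant). Finally, each code line carries $s+1=q$ affine points, so the code has size $3q$, not $3(q-1)$; the size $3q-3$ in Proposition~\ref{prop:T2*0} is obtained only after deleting three further points.
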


\begin{proof}
Consider three lines $\ell_1, \ell_2, \ell_3$ of $T_2^*(\mathcal{O})$ containing $N$ and spanning $\PG(3,q)$. The points of these lines form a dominating set since any point is either on one of these lines or has a unique projection on each line $\ell_i$. As each point has a unique projection on each line $\ell_i$, it is clear that two points on these lines are always separated. Similarly, a point incident with a line $\ell_i$ is always separated from a point not incident with $\ell_1, \ell_2$, or $\ell_3$.

Consider now two points $S_1$ and $S_2$ that do not lie on the lines $\ell_i$. Assume that these points are not separated. In other words, assume that $Q_1 \in \ell_1$, $Q_2 \in \ell_2$ and $Q_3 \in \ell_3$ are common neighbours of $S_1$ and $S_2$. If $S_1$ and $S_2$ are adjacent, then their common neighbours lie on the same line $S_1S_2$. Hence $Q_1, Q_2$ and $Q_3$ are collinear, a contradiction since $\ell_1, \ell_2, \ell_3$ span $\PG(3,q)$.

If $S_1$ and $S_2$ are not adjacent, then either $Q_1, Q_2, Q_3$ all lie in the plane, which is uniquely defined by the previous corollary, that contains the nucleus $N$ and $S_1 S_2 \cap H_\infty$, or at least one of them lies in the plane containing $S_1$, $S_2$ and $N$. In the first case, the three points $Q_1, Q_2, Q_3$ are all in the same plane containing $N$. Hence, $\ell_1, \ell_2, \ell_3$ are coplanar which is a contradiction. In the second case, suppose that $Q_1$ lies in the plane containing $S_1, S_2$ and $N$. It follows that $Q_1$ is incident with the line $S_1N$ or $S_2N$. It implies that either $S_1\in\ell_1$ or $S_2\in\ell_1$, which is a contradiction.

Therefore the set of points on $\ell_1, \ell_2, \ell_3$ is an identifying code of $T_2^*(\mathcal{O})$.
\end{proof}

\begin{proof}[Proof of Proposition~\ref{prop:T2*0}]
Let $\ell_1, \ell_2$ and $\ell_3$ be three lines incident with $N$ and spanning $\PG(3,q)$. Consider the set $C$ consisting of the points of $T_2^*(\mathcal{O})$ on $\ell_1, \ell_2, \ell_3$.  By Theorem~\ref{thm:icT2O}, this set is an identifying code of size $3q$. Let $Q_1$ be a point on $\ell_1$ and $Q_2, Q_3$ be its projections on respectively $\ell_2$ and $\ell_3$. The set $C\setminus \{Q_1, Q_2, Q_3\}$ is still a dominating set. Indeed, a point $P$ that does not lie on the lines $\ell_i$ can not have $Q_1, Q_2$ and $Q_3$ as neighbours. Otherwise, $Q_2$ would have two projections on the line $Q_1P$, namely $Q_1$ and $P$.

Moreover, we have a one-to-one correspondence between the sets $$(N[P]\cap C)\setminus \{Q_1, Q_2, Q_3\}\text{ and }N[P]\cap C$$ since we can easily determine which vertices are eventually missing in the first sets. Hence, $ C\setminus \{Q_1, Q_2, Q_3\}$ is an identifying code of $T_2^*(\mathcal{O})$ of size $3q-3$.
\end{proof}

The next proposition gives lower bounds on the size of an identifying code in any adjacency graph of a $\GQ(q-1,q+1)$. In particular, our previous construction is optimal for $q=4$ and close to a constant for the other cases.

\begin{proposition}\label{prop:lowT2O}
 Let $q$ be a power of $2$. Any identifying code of a $\GQ(q-1,q+1)$ has size at least $3q-7$. Moreover, it has size at least $9=3q-3$ if $q=4$, $19=3q-5$ if $q=8$, $42=3q-6$ if $q=16$ and $90=3q-6$ if $q=32$.
 \end{proposition}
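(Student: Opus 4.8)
The plan is to use the lower bound machinery from Proposition~\ref{prop:lower}, which is tailored to the situation where an identifying code $C$ is small compared to the degree $k$ of the graph --- exactly our regime, since $\GQ(q-1,q+1)$ has degree $k=(q-1)(q+2)$, which is quadratic in $q$, while we expect $|C|$ to be linear in $q$. First I would record the strongly regular parameters: a $\GQ(q-1,q+1)$ is $\srg(q^3,(q-1)(q+2),q-2,q+2)$, so $n=q^3$ and $k=(q-1)(q+2)=q^2+q-2$. To apply Proposition~\ref{prop:lower} we need $k\geq|C|+1$, which will hold for the codes under consideration (and if $|C|$ were larger than $k$ we would already be far above $3q-7$, so the bound is trivially true in that case). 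Plugging $n=q^3$ into the inequality $n\leq \frac{|C|^2}{6}+\frac{(2k+5)|C|}{6}$ and solving the resulting quadratic in $|C|=:c$ gives $c\geq \frac{-(2k+5)+\sqrt{(2k+5)^2+24q^3}}{2}$; substituting $2k+5=2q^2+2q+1$ and expanding $\sqrt{(2q^2+2q+1)^2+24q^3}$ asymptotically yields $c\geq 3q-O(1)$, and a careful (non-asymptotic) estimate of the square root should pin the constant down to $3q-7$ for all powers of $2$ with $q>2$. The main obstacle here is purely the bookkeeping: one must bound $\sqrt{(2q^2+2q+1)^2+24q^3}$ from below by something of the form $2q^2+2q+1+(3q-7)-\varepsilon$ with $\varepsilon<1$; this amounts to checking $(2q^2+5q-6)^2 \le (2q^2+2q+1)^2+24q^3$, i.e.\ that a certain degree-3 polynomial in $q$ is nonnegative, which is a finite verification (it fails for small $q$, which is why the statement lists the small cases separately).

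For the four specific small values $q\in\{4,8,16,32\}$, the generic estimate above already gives $3q-7$ but not the sharper constants claimed (e.g.\ $3q-3$ for $q=4$), so these require a refinement of the discharging argument rather than a brute-force computer search (the graphs have $q^3$ vertices: $64$, $512$, $4096$, $32768$, so an exhaustive search over codes is infeasible for the larger ones). The refinement I would pursue is to reexamine the charge received by a code vertex $c$ more carefully. In Proposition~\ref{prop:lower} the crude bound used $|V_1|\le 1$, $|V_2|\le|C|-1$, and ``everything else gives at most $1/3$''. In a $\GQ$, the neighbourhood $N[c]$ has the extra structure of a union of $t+1=q+2$ lines through $c$ (each of size $s+1=q$), any two of which meet only in $c$; a vertex on such a line that is also adjacent to a second code vertex $c'$ forces $c'$ to be on that same line (if $c'\sim c$) or to be the unique projection point. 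Exploiting this one can bound $|V_2|$ more tightly --- essentially $|V_2|$ is controlled by how the code vertices distribute among the $q+2$ lines through $c$ --- and similarly control the number of vertices contributing $1/3$ versus those contributing $1/4$ or less (a vertex with three code neighbours contributes only $1/3$ but such configurations are geometrically constrained). Pushing the constant from ``$5$'' in the charge bound $\frac{|C|}{6}+\frac{2k+5}{6}$ down appropriately, and keeping track of integrality (the code size is an integer), should yield $3q-3$, $3q-5$, $3q-6$, $3q-6$ in the four cases respectively.

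The hard part is the case analysis in this refined discharging step: one must show that if $|C|$ were, say, $3q-4$ in a $\GQ(3,5)$, then the charge inequality is violated, which requires simultaneously controlling (i) the number of code vertices on each of the $q+2$ lines through a fixed $c\in C$, summing to roughly $|C|$ over the lines, (ii) the separation condition forcing many distinct ``second code neighbours'', and (iii) the domination condition. I expect the cleanest route is to sum the charge bound over all $c\in C$ with the line-structure taken into account, obtaining an inequality of the form $q^3 \le f(|C|,q)$ sharper than the generic one, where $f$ now reflects that the $1/2$-contributions (vertices in $V_2$) cannot all pile onto one code vertex but are spread out. Alternatively, for the smallest case $q=4$ ($n=64$) a direct combinatorial argument --- or a short computation --- confirms $3q-3=9$, matching the construction of Proposition~\ref{prop:T2*0} exactly, which is the motivating point. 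For $q=8,16,32$ the discrepancy between lower bound ($3q-5$ or $3q-6$) and upper bound ($3q-3$) is an additive constant, so I would present the refined discharging bound and note these as the best current estimates; closing the gap entirely is not needed for the $\Theta(n^{1/3})$ conclusion.
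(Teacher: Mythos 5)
Your first half coincides with the paper's argument. For a $\GQ(q-1,q+1)$ one has $n=q^3$ and $k=q^2+q-2$, so Proposition~\ref{prop:lower} forces $|C|^2+(2q^2+2q+1)|C|-6q^3\ge 0$ for any identifying code $C$ with $|C|<k$; substituting $|C|=3q-8$ gives $-q^2-61q+56<0$, hence $\ID\ge 3q-7$. Your square-root estimate is just an equivalent (and more laborious) packaging of this, and it is fine, apart from the misstatement that the generic bound ``fails for small $q$'': the polynomial $-q^2-61q+56$ is negative for every $q\ge 1$, so the small cases are listed because the bound can be \emph{improved} there, not repaired.

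The genuine gap is in the second half. You claim the sharper constants for $q\in\{4,8,16,32\}$ cannot come from this inequality and require a refined discharging argument exploiting the line structure of the quadrangle; you then only sketch that refinement, never carry it out, and end by proposing to state the results as ``best current estimates.'' In fact no refinement is needed for most of the claims: evaluating the \emph{same} quadratic at the specific value of $q$ (equivalently, checking that $(3q-c)^2+(2q^2+2q+1)(3q-c)-6q^3<0$ for $(q,c)\in\{(4,5),(8,6),(16,7),(32,7)\}$, as the paper does) already yields $|C|\ge 8=3q-4$ for $q=4$, $19=3q-5$ for $q=8$, $42=3q-6$ for $q=16$ and $90=3q-6$ for $q=32$. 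So three of the four stated constants follow immediately from the tool you already have, yet remain unproven in your write-up, while the only claim that truly goes beyond the quadratic --- improving $8$ to $9=3q-3$ for $q=4$ --- is handled in the paper by a separate technical analysis (deferred to its arXiv version), and your ``a direct combinatorial argument or a short computation confirms it'' is not a proof (and sits uneasily with your earlier remark that exhaustive search is infeasible). As it stands, the proposal establishes only the $3q-7$ bound.
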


 \begin{proof}
To prove this proposition, we use Proposition~\ref{prop:lower}. Any identifying code $C$ of a $\GQ(q-1,q+1)$, with $|C|<q^2+q-2$ satisfies the inequality
\[ q^3\le \frac{|C|^2}{6}+ \frac{(2(q^2+q-2)+5)|C|}{6}. \]
Hence, $|C|^2+(2q^2+2q+1)|C|-6q^3\ge0$. If there exists an identifying code of size $3q-8$, then the right-hand side of the inequality is equal to
\[ (3q-8)^2+(2q^2+2q+1)(3q-8)-6q^3=-q^2-61q+56\]
which is negative for all $q\ge 32$. This is a contradiction. Therefore, any identifying code of a $\GQ(q-1,q+1)$ has size at least $3q-7$. For small values of $q$, we can obtain a better bound using the same inequality. Since the expression $(3q-c)^2+(2q^2+2q+1)(3q-c)-6q^3$ is negative for $(q,c)\in\{(4,5),(8,6),(16,7),(32,7)\}$, any identifying code of a $\GQ(q-1,q+1)$ has size at least
\[\begin{cases}
8\phantom{0}=3q-4&\text{if }q=4\\
19=3q-5&\text{if }q=8\\
42=3q-6&\text{if }q=16\\
90=3q-6&\text{if }q=32.
\end{cases}\]

We can slightly improve the bound for $q=4$ using a technical analysis. The details can be found on the arXiv version of our paper \url{http://arxiv.org/pdf/1411.5275v1.pdf}.
\end{proof}

\subsubsection{Identifying codes in a parabolic quadric which is a $\GQ(q,q)$}

\begin{proposition}\label{prop:gqqq} Let $q$ be a prime power. There exists a $\GQ(q,q)$ with an identifying code of size $5q-2=\Theta(n^{1/3})$ where $n$ is the number of vertices.
\end{proposition}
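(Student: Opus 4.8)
The plan is to produce an explicit identifying code of size $5q-2$ in the adjacency graph of the parabolic quadric $Q(4,q)$, which is a $\GQ(q,q)$ for every prime power $q$; since this graph has $n=(q^2+1)(q+1)=\Theta(q^3)$ vertices, a code of that size automatically has order $n^{1/3}$. I would work inside the ambient space $\PG(4,q)$, using the quadric model, and follow the same philosophy as for $\TO$ in Theorem~\ref{thm:icT2O}: pick a point together with a few lines of the GQ through it, enforce a genericity (``spanning'') condition, and finally delete a bounded number of redundant points in order to hit the exact value $5q-2$. The geometric ingredients I would use are: in a generalized quadrangle, a point not on a line has a unique neighbour on that line (its ``projection'') and there are no triangles; and in $Q(4,q)$ the closed neighbourhood $N[P]=T_P\cap Q(4,q)$ of a point $P$ is a quadratic cone with vertex $P$ over a non-degenerate conic, while the common neighbours of two non-adjacent points $P$ and $Q$ form the plane section $(T_P\cap T_Q)\cap Q(4,q)$ of the quadric, which spans a plane containing neither $P$ nor $Q$ (compare Lemma~\ref{lem:intersection-cones-T2} and the corollary after it).

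Concretely, I would fix a point $P_0$ of $Q(4,q)$, select a few of the $q+1$ lines of the GQ through $P_0$ and, in addition, a few further lines of the GQ that are disjoint from the selected ones --- a partial spread playing the role of the ``parallel class'' used for $\TO$ --- imposing that no line of the GQ, and no plane section of $Q(4,q)$, other than the selected objects is transversal to the whole configuration. The union of the chosen lines has $5(q+1)-O(1)$ points; as in the proof of Proposition~\ref{prop:T2*0} one then deletes a bounded number of points (points of one line together with their projections onto the others) whose membership in every relevant code-neighbourhood is already forced by the remaining points, arriving at exactly $5q-2$. Domination is the easy half: one line of the GQ already dominates $Q(4,q)$, since every point either lies on it or projects onto it, and one checks, exactly as in Proposition~\ref{prop:T2*0}, that the deleted points are never jointly the neighbours of an omitted vertex. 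For separation I would argue by cases on a pair $u\neq v$, according to whether each of $u$ and $v$ lies on a selected line and whether $u$ and $v$ are adjacent. If at least one of them lies on a selected line, the $q+1\geq2$ points of that line already force distinct code-neighbourhoods. If neither does, then the code records the projections of $u$ and of $v$ onto the selected lines; these would have to lie on the common line $uv$ when $u$ and $v$ are adjacent, and on the conic of their common neighbours when they are not, and the genericity condition, together with ``a line meets a conic in at most two points'' and ``two distinct conics meet in at most four points'', excludes both possibilities.

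The main obstacle --- and the reason the configuration must be designed with care --- is the family of pairs lying inside the cone $N[P_0]$. A single pencil of lines through $P_0$ dominates the whole graph and separates every pair of points outside this cone, but inside it collapses: all points of a fixed line through $P_0$ then acquire the same code-neighbourhood, and so do all points on the lines through $P_0$ that were not selected. The additional partial-spread lines are included exactly to break these coincidences --- a line of the GQ disjoint from a line $\ell$ through $P_0$ sends the points of $\ell$ to pairwise distinct projections --- and the ``spanning''/genericity condition is the analogue of the hypothesis ``$\ell_1,\ell_2,\ell_3$ span $\PG(3,q)$'' in Theorem~\ref{thm:icT2O}. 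Exhibiting one configuration that simultaneously has exactly $5q-2$ points, is generic enough for the separation argument, and leaves every line through $P_0$ disjoint from at least one spread-line is the technical heart of the proof; one must also treat by hand the few small $q$ for which there are too few lines through $P_0$ to carry out the selection.
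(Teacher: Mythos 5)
Your proposal is a strategy rather than a proof: the configuration is never pinned down (how many lines through $P_0$, how many spread lines, what the ``genericity'' condition actually states), and the separation of the dangerous pairs is delegated entirely to that unspecified hypothesis --- but exhibiting one concrete configuration and verifying separation case by case is precisely the content of the proposition. Worse, the specific shape you suggest --- lines of the GQ concurrent at an actual vertex $P_0$, repaired by a couple of spread lines --- has a structural weakness that no genericity of a bounded configuration can obviously fix. In $\TO$ the three chosen lines meet only at the nucleus $N$, which is not a vertex of the graph; in your picture every vertex $w$ collinear with $P_0$ but off the selected pencil lines meets the pencil part of the code in $\{P_0\}$ alone (any further common point would create a triangle), so its code neighbourhood is $\{P_0\}$ together with its projections onto the spread lines. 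Two such vertices $w_1\neq w_2$ on different non-selected lines through $P_0$ are therefore unseparated exactly when they share the same projections $a$, $b$ onto the two spread lines, i.e.\ when the triad $\{P_0,a,b\}$ has two centres avoiding the code lines. Triads with more than one centre are abundant in a $\GQ(q,q)$, and there are about $q^2$ candidate triads (one point of each spread line), so a fixed constant-size configuration cannot be made ``generic'' against all of them; your tools (a line meets a conic in at most two points, two conics in at most four) do not address this case at all, and increasing the number of spread lines to kill it would push the size past $5q+O(1)$ without further ideas.

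The paper's construction avoids concurrency at a vertex altogether: it takes three pairwise disjoint lines $\ell_0,\ell_1,\ell_2$ inside a hyperbolic quadric section $Q^+(3,q)=\pi\cap Q(4,q)$, whose grid structure ($K_{q+1}\square K_{q+1}$) handles all pairs of points of the section, plus two lines $M_1,M_2$ through points $P_1,P_2$ of $\ell_2$ not contained in the $3$-space $\pi$; then any vertex outside the section has a non-coplanar code neighbourhood, which contradicts the coplanarity of common neighbours (Lemma~\ref{lem:gqqqcoplanar}). That dimension/coplanarity argument is the structural ingredient your sketch lacks. Two further slips: a pair of points on the \emph{same} selected line is not separated by that line (both are adjacent to all of it; one needs a code line disjoint from it, as in the paper's argument), and the deletion of five points to reach exactly $5q-2$ requires the paper's explicit check that no vertex is adjacent to all deleted points and that the deleted points are recoverable from the remaining neighbourhoods --- ``membership is already forced'' is not automatic.
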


 Before giving the proof, we will consider a particular construction of a $\GQ(q,q)$ and give some structural properties.

 Let $q$ be a prime power. Let $Q$ be the set of points of $\PG(4,q)$ that satisfy the equation $X_0^2+X_1X_2+X_3X_4=0$ ($Q$ is a parabolic quadric).

 \begin{lemma}[\cite{HT01,PT84}]\label{lem:consgqqq}
 The incidence structure $Q(4,q)$ obtained from the points of $Q$ and lines of $Q$ (i.e.\ lines of $\PG(4,q)$ included in $Q$) is a generalized quadrangle $\GQ(q,q)$. Moreover, the closed neighbourhood of a point $A$ of $Q(4,q)$ is exactly the intersection between a hyperplane $\pi_A$ (called the tangent hyperplane) and $Q$.
 \end{lemma}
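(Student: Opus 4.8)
The plan is to work with the quadratic form $f(X)=X_0^2+X_1X_2+X_3X_4$ and its associated polar (bilinear) form $b(X,Y)=f(X+Y)-f(X)-f(Y)$, which is the characteristic-free tool replacing the tangent hyperplane. First I would record the identity $f(A+\lambda B)=f(A)+\lambda b(A,B)+\lambda^2 f(B)$, valid for all $\lambda\in\mathbb F_q$. For $A\in Q$ (so $f(A)=0$) and $B\ne A$, this shows that the whole line $AB$ lies on $Q$ if and only if $f(B)=0$ and $b(A,B)=0$. Since $A$ lies on $Q$ but not in the radical of $b$ (the only point of that radical is the nucleus $(1,0,0,0,0)$, which satisfies $f=1\ne 0$), the functional $b(A,\cdot)$ is nonzero and its kernel is a hyperplane $\pi_A$ with $A\in\pi_A$. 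Combining, a point $C\in Q$ is joined to $A$ by a line of $Q(4,q)$ exactly when $C\in\pi_A$, so $N[A]=Q\cap\pi_A$; this already proves the second assertion of the lemma and identifies $\pi_A$ as the tangent hyperplane.

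Second, I would analyse $Q\cap\pi_A$ to count the lines through $A$. By Witt's theorem the orthogonal group is transitive on the points of $Q$, so I may assume $A=(0,1,0,0,0)$; then $\pi_A$ has equation $X_2=0$ and $Q\cap\pi_A$ is the locus $X_0^2+X_3X_4=0$ inside the hyperplane $X_2=0$, which is visibly a cone with vertex $A$ over the nondegenerate conic $X_0^2+X_3X_4=0$ of $\PG(2,q)$. Such a conic has $q+1$ points, so $Q\cap\pi_A$ is the union of exactly $q+1$ lines through $A$ and contains no plane. Hence every point of $Q(4,q)$ lies on exactly $q+1=t+1$ lines, giving $t=q$, and no plane of $\PG(4,q)$ is contained in $Q$. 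Since each line of $Q$ has $q+1=s+1$ points (being a line of $\PG(4,q)$), we also get $s=q$.

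Third, for the defining axiom of a generalized quadrangle I would take a line $L$ of $Q$ and a point $P\in Q$ with $P\notin L$, and consider the plane $\Pi=\langle P,L\rangle$. The section $Q\cap\Pi$ is a plane quadric containing the full line $L$; as $Q$ contains no plane, $Q\cap\Pi$ is degenerate and, as a point set containing a line and the extra point $P\notin L$, it must be a pair of distinct lines $L\cup L'$ with $P\in L'$. Then $L'$ is a line of $Q$ through $P$ meeting $L$, and any such line is contained in $\langle P,L\rangle\cap Q=L\cup L'$ and, having $q+1\ge 3$ points, must equal $L'$; this yields existence and uniqueness. Together with $s=t=q$ this shows $Q(4,q)$ is a $\GQ(q,q)$, and the point count $(q^2+1)(q+1)$ then follows from the general formula $(st+1)(s+1)$ recalled in Section~\ref{sec:srg}.

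The main obstacle I anticipate is making the argument uniform in the characteristic: in even characteristic the polar form $b$ is degenerate and the naive description of the tangent hyperplane via partial derivatives picks up the nucleus, so I would take care to define $\pi_A$ as the polar hyperplane $A^\perp$ and to check explicitly that the nucleus lies off $Q$, which is what guarantees that $b(A,\cdot)$ is a nonzero functional for every $A\in Q$. The other delicate point is the classification of the plane section $Q\cap\Pi$ as exactly two distinct lines, which rests on the ``no plane in $Q$'' fact established in the second step; once that is in hand, the generalized-quadrangle axiom reduces to a short incidence argument.
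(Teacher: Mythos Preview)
The paper does not actually prove this lemma: it is stated with citations to the standard references \cite{HT01,PT84} and used as a black box. Your proposal therefore goes well beyond what the paper does, supplying a complete self-contained argument where the paper simply quotes the literature.

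Your argument is correct. The use of the polarisation $b(X,Y)=f(X+Y)-f(X)-f(Y)$ and the identity $f(A+\lambda B)=\lambda\,b(A,B)+\lambda^2 f(B)$ for $A\in Q$ cleanly gives $N[A]=Q\cap A^{\perp}$ in all characteristics, and your check that the nucleus $(1,0,0,0,0)$ lies off $Q$ is exactly what is needed to ensure $A^{\perp}$ is a genuine hyperplane for every $A\in Q$. The reduction via Witt transitivity to $A=(0,1,0,0,0)$, the identification of $Q\cap\pi_A$ as a cone over a nondegenerate conic (hence $q+1$ lines through $A$ and no plane in $Q$), and the plane-section argument for the GQ axiom are all standard and sound; in particular, the step ``$Q\cap\Pi$ contains the line $L$ and the extra point $P$, hence equals two distinct lines'' is justified precisely because you have already excluded planes on $Q$. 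The only cosmetic point is that your appeal to Witt's theorem could be replaced by the explicit transitivity of the orthogonal group on singular points, but that is the same thing.
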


 \begin{lemma}\label{lem:gqqqcoplanar}
 Let $A$ and $B$ be two non adjacent points of $Q$. The common neighbours of $A$ and $B$ are coplanar.
 \end{lemma}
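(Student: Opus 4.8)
\textbf{Proof plan for Lemma~\ref{lem:gqqqcoplanar}.}
The plan is to exploit the explicit description of closed neighbourhoods furnished by Lemma~\ref{lem:consgqqq}: for each point $X$ of $Q$, $N[X] = \pi_X \cap Q$ where $\pi_X$ is the tangent hyperplane at $X$. Hence a common neighbour of $A$ and $B$ is a point of $Q$ lying in $\pi_A \cap \pi_B$. Since $A$ and $B$ are non adjacent (hence distinct), the tangent hyperplanes $\pi_A$ and $\pi_B$ are distinct hyperplanes of $\PG(4,q)$, so $\pi_A \cap \pi_B$ is a plane $\Pi$ (a $2$-dimensional projective subspace). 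Therefore every common neighbour of $A$ and $B$ lies in the fixed plane $\Pi$, which is exactly the statement that they are coplanar.

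The first step I would carry out is to recall, or extract from Lemma~\ref{lem:consgqqq}, the precise form of the tangent hyperplane: with $Q$ defined by the quadratic form $Q(X) = X_0^2 + X_1X_2 + X_3X_4$ and its associated bilinear form $\langle X, Y\rangle$, the tangent hyperplane at $A$ is $\pi_A = \{Y : \langle A, Y\rangle = 0\}$ (in characteristic $2$ one takes the correct alternating/bilinear form, but Lemma~\ref{lem:consgqqq} already packages this). The second step is the dimension count: two distinct hyperplanes in $\PG(4,q)$ meet in a subspace of projective dimension $4 - 1 - 1 = 2$, i.e.\ a plane. For this I only need that $\pi_A \neq \pi_B$, which follows because $A \neq B$ and the polarity $X \mapsto \pi_X$ is injective (a non-degenerate quadric gives a non-degenerate polarity). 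The third step is simply to observe $N[A] \cap N[B] = (\pi_A \cap Q) \cap (\pi_B \cap Q) \subseteq \pi_A \cap \pi_B = \Pi$, a plane, and we are done.

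The only thing to be slightly careful about is the characteristic $2$ case, where the quadratic form and its polar bilinear form behave differently and the "tangent hyperplane" must be the nucleus-corrected hyperplane; but this subtlety is already absorbed into the statement of Lemma~\ref{lem:consgqqq}, which asserts outright that $N[A] = \pi_A \cap Q$ for a genuine hyperplane $\pi_A$. Given that, the argument is purely a projective-dimension computation and presents no real obstacle. If one wanted to avoid invoking injectivity of the polarity, one could instead argue directly that $A$ and $B$ cannot have the same tangent hyperplane: if $\pi_A = \pi_B$ then $B \in \pi_A$, forcing $A$ and $B$ to be collinear on a line of $Q$ (since $\pi_A \cap Q = N[A]$ and two points of $Q$ in a common tangent hyperplane are adjacent), contradicting non-adjacency.
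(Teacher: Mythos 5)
Your proposal is correct and follows essentially the same argument as the paper: both reduce the statement to the fact that the common neighbours lie in $\pi_A\cap\pi_B$, which is a plane since the two tangent hyperplanes of $\PG(4,q)$ are distinct when $A$ and $B$ are non-adjacent. Your fallback justification for $\pi_A\neq\pi_B$ (if $\pi_A=\pi_B$ then $B\in\pi_A\cap Q=N[A]$, contradicting non-adjacency) is exactly the right way to make precise the distinctness step that the paper asserts directly, and it safely sidesteps the characteristic-$2$ polarity subtlety.
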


 \begin{proof}
 Let $\pi_A$ (respectively $\pi_B$) be the hyperplane containing all the neighbours of $A$ (resp. $B$). Since $A$ and $B$ are non adjacent, $\pi_A$ and $\pi_B$ are two distinct hyperplanes (of dimension 3). The common neighbours of $A$ and $B$ are all located in the intersection of $\pi_A$ and $\pi_B$ which is a plane.
 \end{proof}

\begin{proof}[Proof of Proposition~\ref{prop:gqqq}]
We will construct an identifying code for $Q(4,q)$, which is, by Lemma \ref{lem:consgqqq}, a $\GQ(q,q)$.
Consider a hyperplane $\pi=\PG(3,q)$ intersecting $Q(4,q)$ in a hyperbolic quadric $Q^+(3,q)$ (for example the hyperplane $X_0=0$). The hyperbolic quadric is isomorphic to a grid $K_{q+1}\square K_{q+1}$.

Consider three lines $\ell_0$, $\ell_1$, $\ell_2$ of $Q^+(3,q)$ that are pairwise not intersecting. Consider two distinct points $P_1, P_2 \in \ell_2$ and take lines $M_1$ and $M_2$ through $P_1$ and $P_2$ respectively, both not contained in the $Q^+(3,q)$ and hence not lying in the $3$-space $\pi$.

The set of $3(q+1)+2q=5q+3$ points $\mathcal{S}=\ell_0 \cup \ell_1 \cup \ell_2 \cup M_1 \cup M_2$ is an identifying code. Since it contains a whole line, it is a dominating set.
A point $A$ on a line $N_1$ of $\mathcal{S}$ is clearly separated from all the points that are not on
$N_1$ since it is adjacent to all the points of $N_1$. The point $A$ is also separated from all the other points of $N_1$ since they have different projection on any line $N_2$ of $\mathcal{S}$ not intersecting $N_1$. Hence all the points of $\mathcal{S}$ are separated from all the other points.

Consider now a point of $Q^+(3,q)\setminus\mathcal{S}$. It has exactly three neighbours on $\ell_0, \ell_1, \ell_2$ (that are collinear). Two points of $Q^+(3,q)\setminus\mathcal{S}$ with the same projections on $\ell_0, \ell_1, \ell_2$ are necessarily collinear. Hence they have different neighbours on $M_1$ (if the projection on $\ell_2$ is not $P_1$) or on $M_2$ (otherwise).
Hence any point of $Q^+(3,q)\setminus\mathcal{S}$ has a unique set of neighbours.

A point $A$ that is not in $Q^{+}(3,q)$ has four or five neighbours in $\ell_0 \cup \ell_1 \cup \ell_2  \cup M_1 \cup M_2$. Since $A$ does not lie in $Q^{+}(3,q)$, the three points on $\ell_0$, $\ell_1$ and $\ell_2$ are not collinear, hence they span a plane, that is contained in $\pi$. The only points of $M_1$ and $M_2$ that could be contained in this plane are the intersection of $M_1$ and $M_2$ with $\pi$ which is exactly the points $P_1$ and $P_2$. Since $P_1$ and $P_2$ are both in $\ell_2$ they cannot be both in the neighbourhood of $A$. Finally, the neighbours of $A$ in $\mathcal{S}$ are not coplanar. Using Lemma~\ref{lem:gqqqcoplanar}, $A$ is separated from all the other vertices.

To conclude the proof, note that as before we can remove a point on each line of $\mathcal{S}$ and still have an identifying code (remove a point on $\ell_0$, which does not have $P_1$ or $P_2$ as a neighbour, and remove its 4 distinct projections on the other lines).
\end{proof}

The next proposition gives a lower bound on the size of any identifying code of a $\GQ(q,q)$. In particular, the order of our previous construction is optimal. The proof is similar to the proof of Proposition~\ref{prop:lowT2O}.

\begin{proposition}
Let $q$ be a prime power. Any identifying code of  a $\GQ(q,q)$ has size at least $3q-4$.
\end{proposition}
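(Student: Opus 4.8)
The plan is to follow the template of the proof of Proposition~\ref{prop:lowT2O}, applying the counting inequality of Proposition~\ref{prop:lower}. Recall that the adjacency graph of a $\GQ(q,q)$ is a strongly regular graph with parameters $\srg\big((q^2+1)(q+1),\,q(q+1),\,q-1,\,q+1\big)$; in particular it has $n=(q^2+1)(q+1)=q^3+q^2+q+1$ vertices and is regular of degree $k=q^2+q$. Since $q$ is a prime power we have $q\ge 2$, so $\mu=q+1\notin\{0,k\}$ and the graph is primitive by Lemma~\ref{lem:prim}, hence twin-free.

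Let $C$ be an identifying code. First I would split off the trivial case: if $|C|\ge q^2+q$, then since $q^2+q-(3q-4)=(q-1)^2+3>0$ we already have $|C|\ge 3q-4$. So we may assume $|C|\le q^2+q-1=k-1$, i.e.\ $k\ge|C|+1$, which is precisely the hypothesis of Proposition~\ref{prop:lower}. That proposition then gives
$$q^3+q^2+q+1 \;=\; n \;\le\; \frac{|C|^2}{6}+\frac{\big(2(q^2+q)+5\big)|C|}{6},$$
equivalently $|C|^2+(2q^2+2q+5)|C|-6(q^3+q^2+q+1)\ge 0$.

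The left-hand side is a quadratic in $|C|$ whose smaller root is negative, so it is increasing for $|C|>0$; thus it suffices to show it is strictly negative at $|C|=3q-5$ (note $0<3q-5\le q^2+q-1$ for $q\ge 2$, so this value lies in the admissible range). Expanding,
$$(3q-5)^2+(2q^2+2q+5)(3q-5)-6(q^3+q^2+q+1)=-q^2-31q-6<0,$$
a contradiction. Hence no identifying code of size at most $3q-5$ exists, and every identifying code of a $\GQ(q,q)$ has size at least $3q-4$.

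There is no real obstacle here beyond carrying out the polynomial expansion above correctly; the only points requiring care are the two standing hypotheses of Proposition~\ref{prop:lower} — twin-freeness, which follows from primitivity, and the degree bound $k\ge|C|+1$, which is why the case $|C|\ge q^2+q$ is peeled off first.
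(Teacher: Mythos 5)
Your proof is correct and follows exactly the route the paper intends (the paper only says the proof is "similar to the proof of Proposition~\ref{prop:lowT2O}"): apply Proposition~\ref{prop:lower} with $n=q^3+q^2+q+1$ and $k=q^2+q$, check the expansion at $|C|=3q-5$ gives $-q^2-31q-6<0$, and conclude by monotonicity of the quadratic. Your explicit handling of the hypotheses (twin-freeness and peeling off the case $|C|\ge k$) is a welcome tightening of the template rather than a deviation from it.
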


\subsubsection{Identifying codes in an elliptic quadric which is a $\GQ(q,q^2)$}

\begin{proposition}\label{prop:gqqq2}
Let $q$ be a prime power. There exists a $\GQ(q,q^2)$ with an identifying code of size $5q=\Theta(n^{1/4})$ where $n$ is the number of vertices.
\end{proposition}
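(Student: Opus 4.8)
The plan is to follow the template of Propositions~\ref{prop:gqqq} and~\ref{prop:T2*0}, this time realising the $\GQ(q,q^2)$ as the elliptic quadric $Q^-(5,q)$ in $\PG(5,q)$. First I would recall (see~\cite{HT01,PT84}) that the points and lines of $Q^-(5,q)$ form a $\GQ(q,q^2)$, that the closed neighbourhood of a point $A$ is exactly $\pi_A\cap Q^-(5,q)$ for the tangent hyperplane $\pi_A\cong\PG(4,q)$ at $A$, and that for two non-adjacent points $A,B$ the intersection $\pi_A\cap\pi_B$ is a solid $\PG(3,q)$ meeting $Q^-(5,q)$ in an elliptic quadric $Q^-(3,q)$. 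This yields the exact analogue of Lemma~\ref{lem:gqqqcoplanar}: the common neighbours of two non-adjacent points of $Q^-(5,q)$ lie in a common solid and form an ovoid of it; in particular no three of them are collinear, and an ovoid contains no line.

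Next I would fix a solid $\Pi=\PG(3,q)$ on which $Q^-(5,q)$ induces a \emph{hyperbolic} quadric $Q^+(3,q)\cong K_{q+1}\square K_{q+1}$ — such a $\Pi$ exists because the span of two disjoint lines of the $\GQ$ is three-dimensional and contains two disjoint lines of the quadric, which forces the induced quadric to be hyperbolic. Inside the grid I would take three pairwise disjoint lines $\ell_0,\ell_1,\ell_2$ from one of its two reguli, two distinct points $P_1,P_2$ on $\ell_2$, and lines $M_1\ni P_1$ and $M_2\ni P_2$ of the $\GQ$ not contained in $\Pi$ (so $M_i\cap\Pi=\{P_i\}$), chosen moreover so that $\langle\Pi,M_1,M_2\rangle=\PG(5,q)$ — possible since each point lies on $q^2+1>q+1$ lines of the $\GQ$. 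Put $\mathcal S=\ell_0\cup\ell_1\cup\ell_2\cup M_1\cup M_2$, of size $3(q+1)+2q=5q+3$.

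I would then show $\mathcal S$ is an identifying code in four steps, as in the proof of Proposition~\ref{prop:gqqq}. (i)~$\mathcal S$ dominates, containing a whole line. (ii)~Each line $N$ of $\mathcal S$ is disjoint from another line $N'$ of $\mathcal S$; a point of $N$ is then separated from points off $N$ (it is adjacent to the $q\ge 2$ other points of $N$, while a point off $N$ is adjacent to exactly one point of $N$) and from the other points of $N$ (which have pairwise distinct projections on $N'$). (iii)~Two grid points off $\mathcal S$ with the same neighbours on $\ell_0,\ell_1,\ell_2$ lie on a common line of the opposite regulus, hence are collinear and then separated by $M_1$, or by $M_2$ if their common $\ell_2$-projection is $P_1$. (iv)~For a point $A$ outside $\Pi$ and outside $\mathcal S$, its neighbours $R_0\in\ell_0$, $R_1\in\ell_1$, $R_2\in\ell_2$ are not collinear (else $A$ would lie on their common line, a line of $\Pi$), hence span a plane $\sigma\subseteq\Pi$; as $A$ is collinear with at most one of $P_1,P_2$, at least one of its neighbours on $M_1,M_2$ lies outside $\Pi$, and when $A$ is collinear with neither, both do, so (using $\langle\Pi,M_1,M_2\rangle=\PG(5,q)$) the five neighbours of $A$ span a $\PG(4,q)$. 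A vertex $C\ne A$ with the same $\mathcal S$-neighbourhood would then have these points as common neighbours with $A$: impossible if $A\sim C$ (they would be collinear), and if $A\not\sim C$ they would lie in a single solid carrying an ovoid $Q^-(3,q)$, which a comparison of dimensions between $\sigma$, $\Pi$, that solid and the neighbours' span — together with the fact that an ovoid contains no line — excludes.

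To conclude, as in Propositions~\ref{prop:gqqq} and~\ref{prop:T2*0} I would delete three points — a point $Q_0$ of $\ell_0$ not collinear with $P_1$ or $P_2$, together with its projections on $\ell_1$ and $\ell_2$ — leaving $M_1,M_2$ intact so that $\mathcal S$ still contains a whole line and still dominates, and verify that for every vertex $P$ the set $N[P]\cap\mathcal S$ is recoverable from its intersection with the pruned code, so that separation persists. This gives an identifying code of size $5q$; since $n=(q^3+1)(q+1)$, this is $\Theta(n^{1/4})$, matching the order of $\IDf(G)$ from Proposition~\ref{prop:fracboundsGQvt}. The step I expect to be the real obstacle is~(iv), specifically the sub-case where $A$ is collinear with $P_1$ or $P_2$: then $A$ has only four neighbours in $\mathcal S$ — three of them on the grid, spanning $\sigma$, and one outside $\Pi$ — so they span only a solid $W$, and one must show $W$ cannot carry an elliptic quadric $Q^-(3,q)$, using that $W\cap\Pi$ is a plane whose section of the grid is a conic and that $W$ contains a further point of $Q^-(5,q)$ off $\Pi$. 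If this geometric analysis resists, the alternative is to add a third line $M_3$ through a point $P_3\in\ell_2$ — forcing $A$ to always have five neighbours spanning a $\PG(4,q)$ — at the cost of a more elaborate pruning to reach $5q$.
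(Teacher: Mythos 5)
The sub-case you flag in step~(iv) is not a technicality to be polished later; it is a genuine and, as set up, fatal gap, and the auxiliary fact you hope to use is false. Suppose $A\notin\Pi$ is collinear with $P_1$, so its code-neighbourhood is the four points $R_0,R_1,P_1,x_2$ with span a solid $W$. All four lie in the tangent hyperplane $\pi_A$, and $\pi_A\cap Q^-(5,q)$ is the cone with vertex $A$ over a $Q^-(3,q)$. Whenever $A\notin W$ (which is the generic situation, since $W=\langle \sigma,x_2\rangle$ and nothing forces $x_2$ into $\langle\sigma,A\rangle$), every generator through $A$ meets the solid $W$ in exactly one point, so $W\cap Q^-(5,q)$ has exactly $q^2+1$ points and \emph{is} an elliptic quadric $Q^-(3,q)$ — precisely the configuration you intended to exclude; the fact that $W\cap\Pi=\sigma$ meets the grid in a conic is perfectly compatible with this, since an elliptic quadric meets a plane in a conic. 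Worse, an elliptic solid section forces the polar line $W^{\perp}$ to be a secant through $A$, and its second point $C$ satisfies $W\subseteq\pi_C$, hence $C$ is adjacent to $R_0,R_1,P_1,x_2$; its projections on the five code lines are then forced to be exactly these points, so $N[C]\cap\mathcal S=N[A]\cap\mathcal S$ unless $C$ happens to lie on $M_1\cup M_2$. Since there are $\Theta(q^3)$ such points $A$ and only $O(q)$ possible ``saving'' partners on $M_1\cup M_2$, the code $\mathcal S$ genuinely fails to separate some pairs: the construction itself, not just its proof, breaks down. Your fallback of adding a third line $M_3$ through a point of $\ell_2$ can be made to restore the five-point, $4$-space-spanning argument (with suitable spanning conditions on the pairs $M_i,M_j$ relative to $\Pi$), but it yields roughly $6q$ points, so it would not prove the stated bound $5q$.

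The paper avoids this degeneracy by a different choice of five lines: a line $\ell_0$, two solids $\pi_1,\pi_2$ meeting exactly in $\ell_0$ with $\pi_i\cap Q^-(5,q)=Q^+(3,q)$, and pairwise disjoint lines $\ell_1,\ell_2\subset\pi_1$, $\ell_3,\ell_4\subset\pi_2$ (all disjoint from $\ell_0$). Then the five lines are pairwise disjoint, so every point outside $\pi_1\cup\pi_2$ has five \emph{distinct} code-neighbours; the three in $\pi_1$ and the three in $\pi_2$ span two planes meeting only in the common projection on $\ell_0$, hence the neighbours span a $4$-space and Lemma~\ref{lem:gqqq23space} applies with no exceptional case, while points inside $\pi_1$ (resp.~$\pi_2$) are separated by the grid argument plus a line of the other solid. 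Pruning one point of $\ell_0$ together with its four projections then gives exactly $5q$. So your overall strategy (hyperbolic-grid skeleton, solid lemma, pruning) is the right one, but the concrete configuration must be replaced by something like the paper's two-grid configuration for the dimension count to survive.
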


Before giving the proof, we will consider a particular construction of a $\GQ(q,q^2)$ and give some structural properties.

 Let $q$ be a prime power. Let $Q$ be the set of points of $\PG(5,q)$ that satisfy the equation $f(X_0,X_1)+X_2X_3+X_4X_5=0$ where $f(X_0,X_1)=dX_0^2+X_0X_1+X_1^2$, $d \in \mathbb{F}_q$, is an irreducible binary quadratic form over $\mathbb{F}_q$ ($Q$ is an elliptic quadric).

 \begin{lemma}[\cite{HT01,PT84}]\label{lem:consgqqq2}
 The incidence structure $Q^-(5,q)$ obtained from the $(q^3+1)(q+1)$ points of $Q$ and the  $(q^3+1)(q^2+1)$ lines of $Q$ (i.e.\ lines of $\PG(5,q)$ included in $Q$) is a generalized quadrangle $\GQ(q,q^2)$. Moreover, the closed neighbourhood of a point $A$ of $Q^-(5,q)$ is exactly the intersection between a hyperplane $\pi_A$ (the tangent hyperplane of $A$) and $Q$.
 \end{lemma}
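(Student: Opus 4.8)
The plan is to verify directly that the quadric $Q$ is a non-degenerate elliptic quadric of projective dimension $5$, and then to check the three defining axioms of a $\GQ(q,q^2)$ together with the tangent-hyperplane description of neighbourhoods. First I would record that, since $f(X_0,X_1)=dX_0^2+X_0X_1+X_1^2$ is irreducible over $\mathbb F_q$, the binary form $f$ is anisotropic, while each of $X_2X_3$ and $X_4X_5$ is a hyperbolic plane. Thus $Q$ is the orthogonal sum of one anisotropic plane and two hyperbolic planes, i.e.\ a quadric of Witt index $2$ in $\PG(5,q)$; this is precisely the elliptic quadric $Q^{-}(5,q)$. The standard point count for such a quadric gives $(q+1)(q^3+1)$ points, and since every line contained in $Q$ is an ordinary projective line of $\PG(5,q)$ it carries $q+1$ points, so the parameter $s$ equals $q$.

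Next I would introduce the tangent hyperplane. Let $\perp$ be the polarity associated to $Q$, and for $A\in Q$ set $\pi_A=A^{\perp}$, a hyperplane which has projective dimension $3$. The key structural fact to establish is that $\pi_A\cap Q$ is a quadratic cone with vertex $A$ over an elliptic quadric $Q^{-}(3,q)$ lying in a solid: restricting $Q$ to $\pi_A$ and quotienting by $A$ removes one hyperbolic pair and leaves an anisotropic-plus-one-hyperbolic form, i.e.\ an elliptic quadric in $\PG(3,q)$, which has $q^2+1$ points. Every line of $Q$ through $A$ lies inside $\pi_A$ and corresponds to exactly one point of this base quadric, so there are $q^2+1$ such lines and $t=q^2$. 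The same description yields the neighbourhood statement at once: a point $B\neq A$ of $Q$ is joined to $A$ by a line of $Q$ if and only if $B$ lies on the cone, that is, if and only if $B\in\pi_A$; hence $N[A]=\pi_A\cap Q$.

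It then remains to verify the generalized-quadrangle incidence axiom. Given a line $L$ contained in $Q$ and a point $P\in Q\setminus L$, the tangent hyperplane $\pi_P$ meets the line $L$ either in a single point or in all of $L$. The latter case would make $P$ collinear on $Q$ with every point of $L$, producing a plane contained in $Q$, which is impossible because $Q$ has Witt index $2$ and therefore contains no projective plane. Hence $\pi_P\cap L=\{R\}$ is a single point, and since $R\in\pi_P\cap Q=N[P]$ the line $PR$ lies on $Q$ and meets $L$. Any line of $Q$ through $P$ meeting $L$ must meet it in a point of $\pi_P\cap L$, so $PR$ is the unique such line; this is exactly the third axiom. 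Combined with the two incidence counts above, this shows that the incidence structure is a $\GQ(q,q^2)$, and the number of lines is then forced to be $(q^3+1)(q^2+1)$ by the general count for a $\GQ(s,t)$.

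I expect the main obstacle to be the explicit identification of the tangent section $\pi_A\cap Q$ as a cone over $Q^{-}(3,q)$, and in particular handling this uniformly in the characteristic-two case, where the polar form of the quadric is alternating and the polarity $\perp$ must be set up from the quadric directly (via the tangent hyperplane at $A$) rather than from a non-degenerate symmetric bilinear form. Once the cone description is in place, both incidence counts and the uniqueness in the incidence axiom follow from the short geometric arguments sketched above.
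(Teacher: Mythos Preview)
The paper does not prove this lemma at all: it is stated with citations to \cite{HT01,PT84} and used as a black box, so there is no ``paper's own proof'' to compare against. Your sketch is therefore going well beyond what the authors do, and the overall strategy---identify the form as an orthogonal sum of an anisotropic plane and two hyperbolic planes, read off Witt index~$2$, describe the tangent section as a cone over $Q^{-}(3,q)$ to get $t=q^2$ and the neighbourhood statement, and then use Witt index~$2$ (no planes on $Q$) to force uniqueness in the GQ axiom---is the standard and correct one.

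One genuine slip: you write that $\pi_A=A^{\perp}$ ``has projective dimension~$3$''. In $\PG(5,q)$ a hyperplane has projective dimension~$4$; it is only after quotienting by the vertex $A$ that you land in a $\PG(3,q)$ carrying the $Q^{-}(3,q)$ base. The rest of your argument already treats the dimensions correctly, so this is a typo-level error rather than a structural one, but it should be fixed. Your remark about characteristic~$2$ is apt: there the associated bilinear form is alternating (a symplectic polarity on the ambient $\PG(5,q)$), but $A^{\perp}$ is still a well-defined hyperplane through $A$ and the cone description goes through unchanged, so no separate case analysis is actually needed once you define $\pi_A$ via the polar form $B(x,y)=Q(x+y)-Q(x)-Q(y)$.
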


 \begin{lemma}\label{lem:gqqq23space}
 Let $A$ and $B$ be two non adjacent points of $Q$. The common neighbours of $A$ and $B$ lie in a 3-dimensional space.
 \end{lemma}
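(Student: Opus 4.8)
The plan is to mimic, one dimension higher, the argument used for Lemma~\ref{lem:gqqqcoplanar} in the parabolic case $\GQ(q,q)$. By Lemma~\ref{lem:consgqqq2}, the closed neighbourhood of any point $X$ of $Q^-(5,q)$ equals $\pi_X\cap Q$, where $\pi_X$ is the tangent hyperplane of $X$ in $\PG(5,q)$; in particular $N[A]=\pi_A\cap Q$ and $N[B]=\pi_B\cap Q$.

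First I would check that $\pi_A\neq\pi_B$. If these two hyperplanes coincided, then $N[A]=N[B]$, and since $B\in N[B]$ this would force $B\in N[A]$; as $A\neq B$ this means $A$ and $B$ are adjacent, contradicting the hypothesis. Hence $\pi_A$ and $\pi_B$ are two distinct hyperplanes of the $5$-dimensional projective space $\PG(5,q)$.

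Next I would observe that every common neighbour of $A$ and $B$ lies in $N[A]\cap N[B]=(\pi_A\cap Q)\cap(\pi_B\cap Q)\subseteq \pi_A\cap\pi_B$. Since two distinct hyperplanes of $\PG(5,q)$ intersect in a subspace of projective dimension $4+4-5=3$, the set $\pi_A\cap\pi_B$ is a $3$-dimensional space, and the lemma follows.

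There is essentially no real obstacle here beyond the dimension bookkeeping: the entire content is that the tangent hyperplanes of two distinct non-adjacent points are distinct, and that in $\PG(5,q)$ two distinct hyperplanes meet in a $3$-space (one dimension higher than the plane appearing in the $\GQ(q,q)$ case, which is exactly why the resulting identifying codes have order $n^{1/4}$ rather than $n^{1/3}$). The only step warranting a word of care is ruling out $\pi_A=\pi_B$, which amounts to the twin-freeness of the generalized quadrangle.
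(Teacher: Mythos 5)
Your proposal is correct and follows the same route as the paper: common neighbours lie in $\pi_A\cap\pi_B$, and two distinct tangent hyperplanes of $\PG(5,q)$ meet in a $3$-space. The only difference is that you spell out why $\pi_A\neq\pi_B$ (via $B\in N[B]$ and non-adjacency), a detail the paper leaves implicit.
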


 \begin{proof}
 Let $\pi_A$ (respectively $\pi_B$) be the hyperplane containing all the neighbours of $A$ (resp. $B$). Since $A$ and $B$ are non adjacent, $\pi_A$ and $\pi_B$ are two distinct hyperplanes (of dimension 4). The common neighbours of $A$ and $B$ are all located in the intersection of $\pi_A$ and $\pi_B$ which is a 3-dimensional space.
 \end{proof}

\begin{proof}[Proof of Proposition~\ref{prop:gqqq2}]
We will construct an identifying code for $Q^{-}(5,q)$ which is a generalized quadrangle $\GQ(q,q^2)$.
Consider a line $\ell_0$ of $Q^{-}(5,q)$, take two distinct 3-spaces $\pi_1$ and $\pi_2$ of $\PG(5,q)$ intersecting each other only in $\ell_0$ such that $\pi_i\cap Q^{-}(5,q)=Q^{+}(3,q)$.
Take two lines $\ell_1, \ell_2$ in $\pi_1 \cap Q^{-}(5,q)$ such that $\ell_0$, $\ell_1$ and $\ell_2$ are pairwise non-intersecting. Using the geometry, one can always consider two lines $\ell_3, \ell_4$ in $\pi_2 \cap Q^{-}(5,q)$ such that $\ell_0$, $\ell_3$ and $\ell_4$ are pairwise non-intersecting.

We will prove that the set of $5(q+1)=5q+5$ points of $\mathcal{S}=\{\ell_i\}_{i=0,\ldots,4}$ is an identifying code.
Since $\mathcal{S}$ contains a whole line, the set $\mathcal{S}$ is a dominating set.

A point $A$ on a line $N_1$ of $\mathcal{S}$ is clearly separated from all the points that are not on
$N_1$ since it is adjacent to all the points of $N_1$. The point $A$ is also separated from all the other points of $N_1$ since they have different projections on any line $N_2$ of $\mathcal{S}$ not intersecting $N_1$. Hence all the points of $\mathcal{S}$ are separated from all the other points.

Any point of $(\pi_1 \cap Q^{-}(5,q)) \backslash \mathcal{S}$ has exactly three neighbours on $\ell_0, \ell_1, \ell_2$ (that are collinear). Moreover, two points of $(\pi_1 \cap Q^{-}(5,q)) \backslash \mathcal{S}$ with the same projection on $\ell_0, \ell_1, \ell_2$ are necessarily collinear. Hence, they have different neighbours on $\ell_3$. It follows that all the points of $(\pi_1 \cap Q^{-}(5,q)) \backslash \mathcal{S}$ are separated from all the other points. Equivalently, also all the points of $(\pi_2 \cap Q^{-}(5,q)) \backslash \mathcal{S}$ are separated from all the other points.

A point $P\in Q^{-}(5,q)$ not in $\pi_1 \cup \pi_2$ has five neighbours in $\mathcal{S}$. Since $P$ does not lie in $\pi_1$, the three points on $\ell_0$, $\ell_1$ and $\ell_2$ are not collinear, hence they span a plane of $\pi_1$, containing one point of $\ell_0$. Since $P$ does not lie in $\pi_2$, the three points on $\ell_0$, $\ell_3$ and $\ell_4$ are not collinear, hence they span a plane of $\pi_2$, containing one point of $\ell_0$. Now it is clear that the five neighbours of $P$ span a 4-space. Using Lemma \ref{lem:gqqq23space} it follows that the point $P$ is separated by $\mathcal{S}$ from all other points.

To conclude the proof, note that as before we can remove a point on each line of $\mathcal{S}$ and still have an identifying code (remove a point on $\ell_0$ and remove its 4 distinct projections on the lines $\ell_1, \ell_2, \ell_3, \ell_4$).
\end{proof}

The next proposition gives a lower bound on the size of any identifying code of a $\GQ(q,q^2)$. In particular, the order of our previous construction is optimal. The proof is similar to the proof of Proposition~\ref{prop:lowT2O}.

\begin{proposition}
Let $q$ be a prime power. Any identifying code of  a $\GQ(q,q^2)$ has size at least $3q+2$.
\end{proposition}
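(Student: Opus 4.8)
The plan is to follow the strategy of Proposition~\ref{prop:lowT2O} essentially verbatim: bound the size of an identifying code from below via the discharging estimate of Proposition~\ref{prop:lower}.

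First I record the relevant parameters. A $\GQ(q,q^2)$ is, as a strongly regular graph, an $\srg((q^3+1)(q+1),\,q^3+q,\,q-1,\,q^2+1)$; it is primitive (hence twin-free and of diameter $2$), has $n=q^4+q^3+q+1$ vertices, and is $k$-regular with $k=q^3+q$. Let $C$ be an identifying code. If $|C|\ge k$ there is nothing to prove, since $q^3+q\ge 3q+2$ for every prime power $q\ge 2$. Otherwise $k\ge |C|+1$, which is precisely the hypothesis of Proposition~\ref{prop:lower}, so
$$q^4+q^3+q+1\ \le\ \frac{|C|^2}{6}+\frac{\bigl(2(q^3+q)+5\bigr)|C|}{6},$$
equivalently $\phi(|C|)\ge 0$, where $\phi(c)=c^2+(2q^3+2q+5)\,c-6q^4-6q^3-6q-6$.

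Next I would rule out $|C|\le 3q+1$. As $\phi$ is increasing on the positive reals, it suffices to evaluate
$$\phi(3q+1)=-4q^3+15q^2+17q=q\,(-4q^2+15q+17),$$
and the quadratic $-4q^2+15q+17$ is negative for every integer $q\ge 5$. Hence for all prime powers $q\ge 5$ we obtain the contradiction $\phi(|C|)\le\phi(3q+1)<0$, so $|C|\ge 3q+2$; this is of the announced order $\Theta(n^{1/4})$ and matches the order of $\IDf$ coming from Proposition~\ref{prop:fracboundsGQvt}.

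The only delicate point — and, as in Proposition~\ref{prop:lowT2O}, the real obstacle — is the three small quadrangles $q\in\{2,3,4\}$, i.e.\ the classical $Q^-(5,2)$, $Q^-(5,3)$, $Q^-(5,4)$, for which the bare estimate above merely gives $|C|\ge 6,10,13$ respectively. For these I would either sharpen the bookkeeping in the discharging argument — additionally peeling off the set of vertices of a ball $N[c]$ having exactly three code-neighbours and charging the remaining vertices at most $\tfrac14$, exploiting that two adjacent points of a $\GQ$ share only $\lambda=q-1$ common neighbours (so that a line through $c$ carrying a second code point forces its other $s-1$ points to acquire at least two code-neighbours, which constrains how many vertices of $N[c]$ lie in the low charge classes) — or simply verify the bound by a direct computation on these three explicit (and small) graphs. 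Gaining the missing one or two units there is where I expect the work to be.
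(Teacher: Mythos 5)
Your proposal is correct and follows exactly the paper's route: the paper's entire proof here is the remark that the argument of Proposition~\ref{prop:lowT2O} carries over, i.e.\ feed $n=(q^3+1)(q+1)$ and $k=q^3+q$ into Proposition~\ref{prop:lower} and check that $(3q+1)^2+(2q^3+2q+5)(3q+1)-6n=q(-4q^2+15q+17)$ is negative, which is precisely your computation. The caveat you raise is real -- this expression is negative only for $q\ge 5$, so $q\in\{2,3,4\}$ only yield $|C|\ge 6,10,13$ -- but the paper gives no supplementary argument for these cases either (unlike Proposition~\ref{prop:lowT2O}, where small $q$ are treated separately), so your attempt is at least as complete as the published proof, and closing those three cases would indeed require the extra work you sketch.
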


\subsubsection{Identifying codes in a Hermitian variety which is a $\GQ(q^2,q)$}

\begin{proposition}\label{prop:gqq2q}
Let $q$ be a prime power. There exists a $\GQ(q^2,q)$ with an identifying code of size $5q^2-2=\Theta(n^{2/5})$ where $n$ is the number of vertices.
\end{proposition}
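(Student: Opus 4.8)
\textbf{Proof plan for Proposition~\ref{prop:gqq2q}.} The plan is to mimic the constructions used for the other classical generalized quadrangles, but now working inside the Hermitian variety $H(3,q^2)$, which is a $\GQ(q^2,q)$. First I would recall the standard description: $H(3,q^2)$ is the set of points of $\PG(3,q^2)$ satisfying $X_0^{q+1}+X_1^{q+1}+X_2^{q+1}+X_3^{q+1}=0$, the lines of the $\GQ$ being the lines of $\PG(3,q^2)$ entirely contained in the variety; each point $A$ has exactly $q+1$ lines through it and each line carries $q^2+1$ points, and the closed neighbourhood $N[A]$ is the intersection of $H(3,q^2)$ with the tangent plane $\pi_A$ of $A$. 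The crucial structural fact, analogous to Lemma~\ref{lem:gqqqcoplanar} and Lemma~\ref{lem:gqqq23space}, is that two non-adjacent points $A$ and $B$ have their tangent planes meeting in a line, so their common neighbours (there are $t+1=q+1$ of them) all lie on a single line of $\PG(3,q^2)$; this plays the same role the coplanarity statements played before.

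Next I would set up the code. As in the parabolic case, I would pick a suitable sub-configuration: take $q+1$ pairwise non-concurrent lines of the $\GQ$ forming a ``spread-like'' or grid-like structure in a sub-plane, plus a few extra lines transversal to it, so that the union contains a whole line (hence is dominating) and so that any point outside has a neighbourhood in the code which is not contained in a single line. Concretely, I expect the code to be a union of $5$ lines of $H(3,q^2)$ arranged so that three of them lie in a common plane meeting $H(3,q^2)$ in a Hermitian curve (or a hyperbolic-type substructure), pairwise non-intersecting, and the remaining two are chosen transversal, attached at two distinct points of one of the first three lines; this yields roughly $5(q^2+1)=5q^2+5$ points. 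Then, exactly as in the proofs of Propositions~\ref{prop:gqqq} and \ref{prop:gqqq2}, I would verify separation in cases: points lying on a code line are separated from everything by using their unique projections onto the other non-intersecting code lines; points not on a code line but in the special sub-plane have three collinear neighbours on the three coplanar lines and are distinguished by their differing neighbours on the transversal lines; and a point genuinely off the sub-plane has neighbours on the code lines that span more than a line, which contradicts the line-intersection property of tangent planes for non-adjacent points. Finally I would trim one point from each of the $5$ lines (removing a point of one line together with its four distinct projections), checking domination is preserved and that the missing code-neighbours can be reconstructed, bringing the size down to $5q^2-2$; since $n=(q^3+1)(q^2+1)=\Theta(q^5)$, this is $\Theta(n^{2/5})$.

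The main obstacle I anticipate is purely geometric: ensuring that the required configuration of pairwise non-concurrent lines actually exists inside $H(3,q^2)$ and that one can attach two transversal lines of the $\GQ$ at chosen points so that, for \emph{every} point $P$ off the chosen sub-plane, the (four or five) code-neighbours of $P$ fail to be collinear. In the parabolic and elliptic cases this hinged on a clean dimension count (three non-collinear points span a plane, five span a $4$-space) combined with the tangent-hyperplane description of neighbourhoods; here the analogous count must be carried out in $\PG(3,q^2)$ using that the tangent planes of two non-adjacent points meet in a line, and one must also handle the subtlety that the transversal lines meet the sub-plane only in their attachment points, so at most one of them can contribute a neighbour in the sub-plane. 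Once the existence of the configuration and this non-collinearity are established, the separation analysis and the final trimming are routine and follow the template of the earlier proofs verbatim.
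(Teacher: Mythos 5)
Your high-level template is the right one: you identify $H(3,q^2)$ with its tangent-plane description of closed neighbourhoods, the key fact that the common neighbours of two non-adjacent points are collinear (Lemma~\ref{lem:gqq2q3space}), a code made of five lines (three pairwise disjoint ones plus two attached at two distinct points of one of them), and the final trimming of five points to reach $5q^2-2=\Theta(n^{2/5})$. But the concrete configuration you announce cannot exist: you ask for three pairwise non-intersecting lines of the $\GQ$ lying ``in a common plane meeting $H(3,q^2)$ in a Hermitian curve (or a hyperbolic-type substructure)''. In $\PG(3,q^2)$ any two coplanar lines meet, so three pairwise disjoint lines are pairwise skew and never coplanar; moreover a plane meets $H(3,q^2)$ either in a Hermitian curve, which contains no lines of the $\GQ$ at all, or in a tangent plane, whose $q+1$ lines all pass through the point of tangency and hence pairwise intersect. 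You are transporting the $Q(4,q)$ template of Proposition~\ref{prop:gqqq}, where the three lines live in a hyperplane section $Q^+(3,q)$ of $\PG(4,q)$, into an ambient space of the wrong dimension; in $H(3,q^2)\subset\PG(3,q^2)$ no such carrier substructure exists, and none is needed. The paper's construction simply takes any three pairwise disjoint lines $L_0,L_1,L_2$ of $H(3,q^2)$, two distinct points $P_1,P_2\in L_0$, and lines $M_1\ni P_1$, $M_2\ni P_2$ disjoint from $L_1$ and $L_2$, giving $3(q^2+1)+2q^2=5q^2+3$ points before trimming.

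Because the sub-plane does not exist, your separation case split (``points in the special sub-plane'' versus ``points genuinely off it'', settled by a span-versus-line dimension count) does not go through as stated. The paper's separation argument is flatter: for any two points outside the code with the same code-neighbourhood, the equal projections on $L_0,L_1,L_2$ are three collinear points (by Lemma~\ref{lem:gqq2q3space} when the two points are non-adjacent, and because common neighbours of adjacent points lie on their joining line otherwise), and then $M_2$ or $M_1$ separates the pair according to whether that line passes through $P_1$ or $P_2$; no dichotomy of the point set is required. You do flag the existence of the configuration as your ``main obstacle'', which is honest, but as formulated the issue is not a missing verification: the announced configuration is impossible, and the repair is to drop the coplanarity requirement altogether, after which the counting and the trimming (one point of $L_1$ together with its four projections) work exactly as you describe.
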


Before giving the proof, we will consider a particular construction of a $\GQ(q^2,q)$ and give some structural properties.

 Let $q$ be a prime power. Let $H$ be the set of points of $\PG(3,q^2)$ that satisfy the equation $X_0^{q+1}+X_1^{q+1}+X_2^{q+1}+X_3^{q+1}=0$ ($H$ is a Hermitian variety).

 \begin{lemma}[\cite{HT01,PT84}]\label{lem:consgqq2q}
 The incidence structure $H(3,q^2)$ obtained from the $(q^3+1)(q^2+1)$ points of $H$ and the $(q^3+1)(q+1)$ lines of $H$ (i.e.\ lines of $\PG(3,q^2)$ included in $H$) is a generalized quadrangle $\GQ(q^2,q)$. Moreover, the closed neighbourhood of a point $A$ of $H(3,q^2)$ is exactly the intersection between a plane $\pi_A$ (the tangent hyperplane of $A$) and $H$.
 \end{lemma}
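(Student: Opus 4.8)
The plan is to realise $H$ as the set of isotropic points of the nondegenerate Hermitian sesquilinear form $b(X,Y)=\sum_{i=0}^{3} X_i Y_i^{q}$ on $\mathbb{F}_{q^2}^4$, whose associated condition $b(X,X)=\sum_i X_i^{q+1}=0$ is exactly the defining equation of $H$, and to exploit the induced Hermitian polarity $P\mapsto \pi_P=\{X : b(P,X)=0\}$. First I would record two facts that drive everything. Since $b$ is nondegenerate, each $\pi_P$ is a plane, and $P\in H \iff b(P,P)=0 \iff P\in\pi_P$. More importantly, \emph{collinearity inside $H$ coincides with conjugacy}: for isotropic $P,Q$ with $b(P,Q)=0$ every point $\alpha P+\mu Q$ of the joining line satisfies $b(\alpha P+\mu Q,\alpha P+\mu Q)=0$ (all four terms of the expansion vanish), so the whole line lies in $H$; conversely a line in $H$ forces $b(P,Q)=0$. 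Thus $B$ is a neighbour of $A$ in the adjacency graph precisely when $B\in\pi_A\cap H$.

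Next I would determine the line intersection numbers by restricting $b$ to the $2$-dimensional space underlying a projective line $\ell$. The restriction is either identically zero, in which case $\ell\subseteq H$ carries $q^2+1$ points (a \emph{generator}); nondegenerate, in which case $\ell\cap H$ is the Hermitian curve $H(1,q^2)$, whose $q+1$ points come from the $(q+1)$-to-$1$ norm map $x\mapsto x^{q+1}$ of $\mathbb{F}_{q^2}^\ast$ onto $\mathbb{F}_q^\ast$ applied to $-1$; or of rank one, giving a single tangent point. In particular every generator of $H$ has $q^2+1$ points, so $s=q^2$.

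To fix the second parameter and the quadrangle axiom, I would analyse the tangent plane. For $A\in H$ write $W=A^{\perp}$ for the vector space of $\pi_A$; then the radical of $b|_W$ is $W\cap W^{\perp}=A^{\perp}\cap\langle A\rangle=\langle A\rangle$, so modulo $A$ the form becomes a nondegenerate $H(1,q^2)$ with $q+1$ isotropic points. Each lifts to a generator through $A$, and every generator through $A$ lies in $\pi_A$, so there are exactly $q+1$ lines of $H$ through $A$ and $t=q$. For the axiom, take $A\in H$ and a generator $L$ with $A\notin L$: the points of $L$ collinear with $A$ are $L\cap\pi_A$, and since a totally isotropic subspace of the nondegenerate $b$ on $\mathbb{F}_{q^2}^4$ has vector dimension at most $2$, $H$ contains no plane, so $L\not\subseteq\pi_A$ and $L\cap\pi_A$ is a single point $Q$; then $AQ$ is the unique line of $H$ through $A$ meeting $L$.

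Having verified the axioms with $s=q^2$ and $t=q$, the standard counting for a $\GQ(s,t)$ recorded earlier gives $(st+1)(s+1)=(q^3+1)(q^2+1)$ points and $(st+1)(t+1)=(q^3+1)(q+1)$ lines, as claimed; and the closed neighbourhood of a vertex $A$ is $A$ together with the points collinear with it, which by the collinearity--conjugacy equivalence equals $\{B\in H : b(A,B)=0\}=\pi_A\cap H$, the announced tangent-plane section. The main obstacle I anticipate is the tangent-plane computation: pinning the radical of $b|_{\pi_A}$ down to the single point $A$ and reading off the quotient as $H(1,q^2)$ is what simultaneously forces $t=q$ and, through the ``no planes in $H$'' fact, underlies the uniqueness in the quadrangle axiom.
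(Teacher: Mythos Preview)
The paper does not give a proof of this lemma: it is stated with a citation to the standard references \cite{HT01,PT84} and used as a black box. Your sketch is a correct outline of the classical argument found in those references---realising $H$ via the nondegenerate Hermitian form, identifying collinearity in $H$ with conjugacy under the polarity, reading off $s=q^2$ from the generator size and $t=q$ from the tangent-plane quotient $A^{\perp}/\langle A\rangle\cong H(1,q^2)$, and deducing the one-all axiom from the fact that the Witt index is $2$ so $H$ contains no planes. The closed-neighbourhood description then falls out immediately from the collinearity--conjugacy equivalence.

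One minor point worth tightening in a full write-up: in the converse direction of ``collinearity $\Leftrightarrow$ conjugacy'' you assert that a line in $H$ forces $b(P,Q)=0$, but the one-line check you give for the forward direction does not immediately reverse. The clean way is to note that if $\alpha P+Q$ is isotropic for every $\alpha\in\mathbb{F}_{q^2}$ then $\alpha\,b(P,Q)+\alpha^{q}\,b(P,Q)^{q}=0$ for all $\alpha$, i.e.\ the relative trace of $\alpha\,b(P,Q)$ vanishes identically, which forces $b(P,Q)=0$ by nondegeneracy of the trace form. This is routine, but since you flagged the tangent-plane step as the delicate one, it is worth recording that this earlier equivalence also needs a sentence.
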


 It is well known that the dual of $H(3,q^2)$ is $Q^-(5,q)$, see \cite[3.2.3]{PT84}.

 \begin{lemma}\label{lem:gqq2q3space}
 Let $A$ and $B$ be two non adjacent points of $H$. The common neighbours of $A$ and $B$ lie on a line.
 \end{lemma}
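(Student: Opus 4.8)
The plan is to exploit the duality between $H(3,q^2)$ and $Q^-(5,q)$ together with Lemma~\ref{lem:gqqq23space}, or alternatively to argue directly inside $\PG(3,q^2)$ using the tangent-plane description of closed neighbourhoods from Lemma~\ref{lem:consgqq2q}. I will take the direct approach since it is cleaner. Let $A$ and $B$ be two non-adjacent points of $H$. By Lemma~\ref{lem:consgqq2q}, the closed neighbourhood $N[A]$ is $\pi_A\cap H$ where $\pi_A$ is the tangent plane of $H$ at $A$, and similarly $N[B]=\pi_B\cap H$. Since $A$ and $B$ are not adjacent, the tangent planes $\pi_A$ and $\pi_B$ are distinct planes of $\PG(3,q^2)$, so their intersection $\pi_A\cap\pi_B$ is a line $\ell$. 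Every common neighbour of $A$ and $B$ lies in $N[A]\cap N[B]\subseteq\pi_A\cap\pi_B=\ell$, which is exactly the claim.

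The one point that needs a word of justification is that $\pi_A\neq\pi_B$ when $A\not\sim B$. I would argue this by contradiction: if $\pi_A=\pi_B$, then $B\in\pi_A\cap H=N[A]$, so $B$ would be adjacent to (or equal to) $A$, contrary to the assumption that $A$ and $B$ are distinct non-adjacent points. This uses only the fact that the tangent plane at $A$ meets $H$ precisely in $N[A]$, which is given by Lemma~\ref{lem:consgqq2q}. Two distinct planes in a $3$-dimensional projective space always meet in a line, so no further case analysis is required.

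The main obstacle here is essentially nonexistent: the lemma is the exact analogue, one projective dimension lower, of Lemmas~\ref{lem:gqqqcoplanar} and~\ref{lem:gqqq23space}, and its proof is a two-line dimension count once the tangent-plane description of neighbourhoods is in hand. If one preferred, one could instead deduce it from the duality $H(3,q^2)\cong Q^-(5,q)^{\mathrm{dual}}$ noted just above, translating Lemma~\ref{lem:gqqq23space} through the dual, but this is strictly more cumbersome than the direct argument and offers nothing extra. So I would simply record the direct proof:

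\begin{proof}
Let $\pi_A$ (respectively $\pi_B$) be the plane containing all the neighbours of $A$ (resp.\ $B$), as given by Lemma~\ref{lem:consgqq2q}. Since $A$ and $B$ are non adjacent, $\pi_A$ and $\pi_B$ are two distinct planes. The common neighbours of $A$ and $B$ are all located in the intersection of $\pi_A$ and $\pi_B$, which is a line.
\end{proof}
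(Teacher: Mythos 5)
Your proof is correct and follows essentially the same route as the paper: use the tangent-plane description from Lemma~\ref{lem:consgqq2q}, observe that non-adjacency forces $\pi_A\neq\pi_B$, and conclude that the common neighbours lie in the line $\pi_A\cap\pi_B$. Your extra remark justifying $\pi_A\neq\pi_B$ (if the planes coincided, $B$ would lie in $N[A]$) is a small detail the paper leaves implicit, but the argument is otherwise identical.
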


 \begin{proof}
 Let $\pi_A$ (respectively $\pi_B$) be the hyperplane containing all the neighbours of $A$ (resp. $B$). Since $A$ and $B$ are non adjacent, $\pi_A$ and $\pi_B$ are two distinct planes. The common neighbours of $A$ and $B$ are all located in the intersection of $\pi_A$ and $\pi_B$ which is a line.
 \end{proof}

\begin{proof}[Proof of Proposition~\ref{prop:gqq2q}]
We will construct an identifying code for $H(3,q^2)$ which is a generalized quadrangle $\GQ(q^2,q)$.

Consider three disjoint lines $L_0, L_1, L_2$, two distinct points $P_1, P_2 \in L_0$ and two lines $M_1$ and $M_2$ containing $P_1$ and $P_2$ respectively, and not intersecting $L_1$ or $L_2$.
The set $\mathcal{S}= L_0 \cup L_1 \cup L_2 \cup M_1 \cup M_2$ of $|\mathcal{S}|=5q^2+3$ points will be an identifying code.

Since $\mathcal{S}$ contains a whole line, the set $\mathcal{S}$ is a dominating set.

A point $A$ on a line $N_1$ of $\mathcal{S}$ is clearly separated from all the points that are not on
$N_1$ since it is adjacent to all the points of $N_1$. The point $A$ is also separated from all the other points of $N_1$ since they have different projections on any line $N_2$ of $\mathcal{S}$ not intersecting $N_1$. Hence all the points of $\mathcal{S}$ are separated from all the other points.

If two points $R$ and $Q$ have the same neighbourhood on $\{L_0, L_1, L_2\}$, then this neighbourhood consists of collinear points by Lemma~\ref{lem:gqq2q3space}. If the line containing these points also contains $P_1$, then the projections of $R$ and $Q$ on the line $M_2$ are different. If the line would contain $P_2$, then the projections of $R$ and $Q$ on the line $M_1$ are different. Hence, $\mathcal{S}$ is a separating set.

To conclude the proof, note that as before we can remove a point on each line of $\mathcal{S}$ and still have an identifying code (remove a point on $L_1$, that is not a neighbour of $P_1$ or $P_2$, and remove its 4 distinct projections on the lines $L_0, L_2,M_1,M_2$).
\end{proof}

The next proposition gives a lower bound on the size of any identifying code of a $\GQ(q^2,q)$. In particular, the order of our previous construction is optimal. The proof is similar to the proof of Proposition~\ref{prop:lowT2O}.

\begin{proposition}
Let $q$ be a prime power. Any identifying code of  a $\GQ(q^2,q)$ has size at least $2q^2-2$.
\end{proposition}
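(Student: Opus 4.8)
The plan is to mirror the proof of Proposition~\ref{prop:lowT2O}: the bound will follow by feeding the parameters of a $\GQ(q^2,q)$ into the discharging estimate of Proposition~\ref{prop:lower}. First I would record that the adjacency graph $G$ of a $\GQ(q^2,q)$ is the strongly regular graph with $n=(q^3+1)(q^2+1)$ vertices and degree $k=q^2(q+1)=q^3+q^2$, and that by Proposition~\ref{prop:symdif} its smallest symmetric difference of closed neighbourhoods equals $d=2q^2(q+1)-2q^2=2q^3>0$; in particular $G$ is twin-free, so it makes sense to speak of identifying codes and Proposition~\ref{prop:lower} applies. Since the argument uses only $n$, $k$ and twin-freeness — all determined by the parameters $(s,t)=(q^2,q)$ — it will cover every $\GQ(q^2,q)$, not just the Hermitian model $H(3,q^2)$.

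Next I would argue by contradiction. Suppose $G$ has an identifying code $C$ with $|C|\le 2q^2-3$. For every prime power $q\ge 2$ one has $|C|+1\le 2q^2-2\le q^3+q^2=k$, so the hypothesis $k\ge |C|+1$ of Proposition~\ref{prop:lower} is satisfied, and that proposition yields
$$6n\le |C|^2+(2k+5)|C|,\qquad\text{i.e.}\qquad |C|^2+(2k+5)|C|-6n\ge 0.$$
The left-hand side is an increasing function of $|C|$ on $[0,\infty)$, so it is enough to derive a contradiction for the extreme value $|C|=2q^2-3$.

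The last step is a direct expansion. Substituting $|C|=2q^2-3$, $k=q^3+q^2$ and $n=(q^3+1)(q^2+1)$ into $|C|^2+(2k+5)|C|-6n$ gives $-2q^5+8q^4-12q^3-14q^2-12=-2\,(q^5-4q^4+6q^3+7q^2+6)$. The polynomial $q^5-4q^4+6q^3+7q^2+6$ is strictly positive for all $q\ge 2$: for $q\ge 4$ because $q^5-4q^4=q^4(q-4)\ge 0$ and the remaining terms are positive, and for $q\in\{2,3\}$ by direct evaluation. Hence $|C|^2+(2k+5)|C|-6n<0$, contradicting the inequality above, so no identifying code of size at most $2q^2-3$ exists, i.e.\ $\ID(G)\ge 2q^2-2$.

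I do not expect a real obstacle. The only points needing care are verifying the side condition $k\ge|C|+1$ (immediate for $q\ge 2$) and checking that the degree-five polynomial produced by the expansion is negative for every relevant $q$; both are handled by the elementary estimates above, exactly as in the proof of Proposition~\ref{prop:lowT2O}.
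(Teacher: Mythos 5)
Your proposal is correct and follows exactly the route the paper intends: the paper's proof is declared to be "similar to the proof of Proposition~\ref{prop:lowT2O}", i.e.\ feed $n=(q^3+1)(q^2+1)$ and $k=q^2(q+1)$ into the discharging bound of Proposition~\ref{prop:lower} and check that the resulting quadratic inequality fails for $|C|=2q^2-3$. Your verification of the side condition $k\ge|C|+1$, the monotonicity reduction to the extreme value, and the sign of the expanded polynomial are all accurate, so nothing is missing.
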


\section*{Conclusion and Perspectives}

We provide identifying codes for several vertex-transitive families of graphs which have size of the same order as the fractional value. Since the graphs considered have diameter 2, our results can be extended to locating-dominating sets and to metric dimension, providing new constructions of optimal order for such sets in some families of strongly regular graphs.

Paley graphs are an example of a family of graphs for which the optimal order for the size of identifying codes is at a logarithmic factor of the fractional value. However, the fractional value is bounded by a constant. It would be interesting to exhibit a family of graphs for which the optimal values of integer and fractional identifying codes do not have the same order and such that the fractional value is not bounded by a constant.

\subsection*{Acknowledgements}
We thank Nathann Cohen for helping us with implementations in Sage.


\end{document}